\newtheorem{theorem}{Theorem}[section]
\newtheorem{lemma}[theorem]{Lemma}
\newtheorem{proposition}[theorem]{Proposition}
\newtheorem{property}{Property}[section]
\theoremstyle{definition}
\newtheorem{definition}[theorem]{Definition}
\newtheorem{example}[theorem]{Example}
\newtheorem{remark}[theorem]{Remark}
\numberwithin{equation}{section}
\newenvironment{proofof}[1][]{\begin{trivlist}
  \item[\hskip \labelsep {\bfseries Proof of #1.}]}{\qed\end{trivlist}}
\title[Symmetric Rank Covariance]{Symmetric Rank Covariances: a Generalised Framework for Nonparametric Measures of Dependence}
\author{Luca Weihs}
\address{Department of Statistics, University
  of Washington, Seattle, WA, U.S.A.}
\email{lucaw@uw.edu}
\author{Mathias Drton}
\address{Department of Statistics, University
  of Washington, Seattle, WA, U.S.A.}
\email{md5@uw.edu}
\author{Nicolai Meinshausen}
\address{Seminar for Statistics, ETH Z{\"u}rich, Switzerland}
\email{meinshausen@stat.math.ethz.ch}
\keywords{dependence, Hoeffding's D, independence testing, Kendall's tau, nonparametric, orthogonal range query, U statistic.}
\begin{document}
\allowdisplaybreaks

\begin{abstract}
  The need to test whether two random vectors are independent has
  spawned a large number of competing measures of dependence.  We
  are interested in nonparametric measures that are invariant under strictly
  increasing transformations, such as Kendall's tau, Hoeffding's D,
  and the more recently discovered Bergsma--Dassios sign
  covariance. Each of these measures exhibits symmetries that are not
  readily apparent from their definitions.  Making these symmetries
  explicit, we define a new class of multivariate nonparametric
  measures of dependence that we refer to as Symmetric Rank
  Covariances. This new class generalises all of the above measures
  and leads naturally to multivariate extensions of the
  Bergsma--Dassios sign covariance. Symmetric Rank Covariances may be
  estimated unbiasedly using U-statistics for which we prove results
  on computational efficiency and large-sample behavior.  The
  algorithms we develop for their computation include, to the best of
  our knowledge, the first efficient algorithms for the well-known
  Hoeffding's D statistic in the multivariate setting.
\end{abstract}

\maketitle

\section{Introduction}\label{section: introduction}

Many applications, from gene expression analysis to feature selection in machine learning tasks, require quantifying the dependence between collections of random variables. Letting $X=(X_1,...,X_r)$ and $Y = (Y_1,...,Y_s)$ be random vectors, we are interested in measures of dependence $\mu$ which exhibit the following three properties,

\begin{enumerate}[(1)]
\item \emph{I-consistency}: if $X,\ Y$ are independent then $\mu(X,Y) = 0$,
\item \emph{D-consistency}: if $X,\ Y$ are dependent then $\mu(X,Y) \not= 0$,
\item \emph{Monotonic invariance}: if $f_1,\dots,f_r,g_1,\dots,g_s$ are
  strictly increasing functions then
  $\mu(X,Y) = \mu((f_1(X_1),\dots,f_n(X_r)), (g_1(Y_1),\dots,g_m(Y_s))$.
  For simpler language, we also refer to this property as $\mu$ being
  \emph{nonparametric}.
\end{enumerate}

If $\mu$ is I-consistent then tests of independence can be based on
the null hypothesis $\mu(X,Y)=0$.  If $\mu$ is additionally
D-consistent then tests based on consistent estimators of $\mu$ are
guarenteed to asymptotically reject independence when it fails to
hold. When $\mu$ is both I- and D-consistent we will simply call it
\emph{consistent}. On the other hand, monotonic invariance is the
intuitive requirement that the level of dependence between two random
vectors is invariant to monotonic transformations of any
coordinate. Unfortunately, many popular measures of dependence fail to
satisfy some subset of these properties. For instance, Kendall's
$\tau$ \citep{Kendall38} and Spearman's $\rho$ \citep{Spearman04} are
nonparametric and I-consistent but not D-consistent while the distance
correlation \citep{SzekelyEtAl07} is consistent but not nonparametric
in the above sense.

For bivariate observations, \cite{Hoeffding48} introduced a
nonparametric dependence measure that is consistent for a large class
of continuous distributions.  Let $(X,Y)$ be a random vector taking
values in $\bR^2$, with joint and marginal distribution
functions $F_{XY}$, $F_X$, and $F_Y$.  Then the statistic, now called
Hoeffding's $D$, is defined as
\begin{align}
  \label{eq:hoeffding-D-R2}
  D=\int_{\bR^2} (F_{XY}(x,y) - F_X(x)F_Y(y))^2\dF_{XY}(x,y).
\end{align}
\cite{BergsmaDassios14} introduced a new bivariate dependence measure
$\tau^*$ that is nonparametric and improves upon Hoeffding's $D$ by
guaranteeing consistency for all bivariate mixtures of continuous and
discrete distributions.  As its name suggests, $\tau^*$ generalises
Kendall's $\tau$; where $\tau$ counts concordant and discordant pairs
of points, $\tau^*$ counts concordant and discordant quadruples of
points.  The proof of consistency of $\tau^*$ is considerably more
involved than that for $D$.

Surprisingly both $D$ and $\tau^*$ exhibit a number of identical
symmetries that are obfuscated by their usual definitions. Indeed, as will
be made precise, $D$ and $\tau^*$ can be represented as the covariance
between signed sums of indicator functions acted on by the subgroup
\begin{align*}
	H = \langle (1\ 4),\ (2\ 3) \rangle
\end{align*}
of the symmetric group on four elements. We generalise the above
observation to define a new class of dependence measures called
Symmetric Rank Covariances. All such measures are I-consistent,
nonparametric, and include $D$, $\tau^*$, $\tau$, and $\rho$ as
special cases. Moreover, our new class of measures includes natural
multivariate extensions of $\tau^*$ which themselves inspire new notions
of concordance and discordance in higher dimensions, see Figure 
\ref{fig:tau-p-concord-discord}. While Symmetric Rank Covariances
need not always be D-consistent we identify a sub-collection of
measures that are.  These consistent measures can be interpreted as
testing independence by applying, possibly infinitely many,
independence tests to discretizations of $(X,Y)$. Symmetric Rank
Covariances can be readily estimated using U-statistics and we show
that the use of efficient data structures for orthogonal range queries
can result in substantial savings. Moreover, we show
that under independence many of the resulting U-statistics are
degenerate of order 2, thus having non-Gaussian limiting distributions. For
space, most proofs have been moved to Appendix \ref{app:proofs}.

\begin{figure}
  \centering
  \begin{subfigure}[t]{0.66\textwidth}
    \centering
    \includegraphics[width=.4\textwidth]{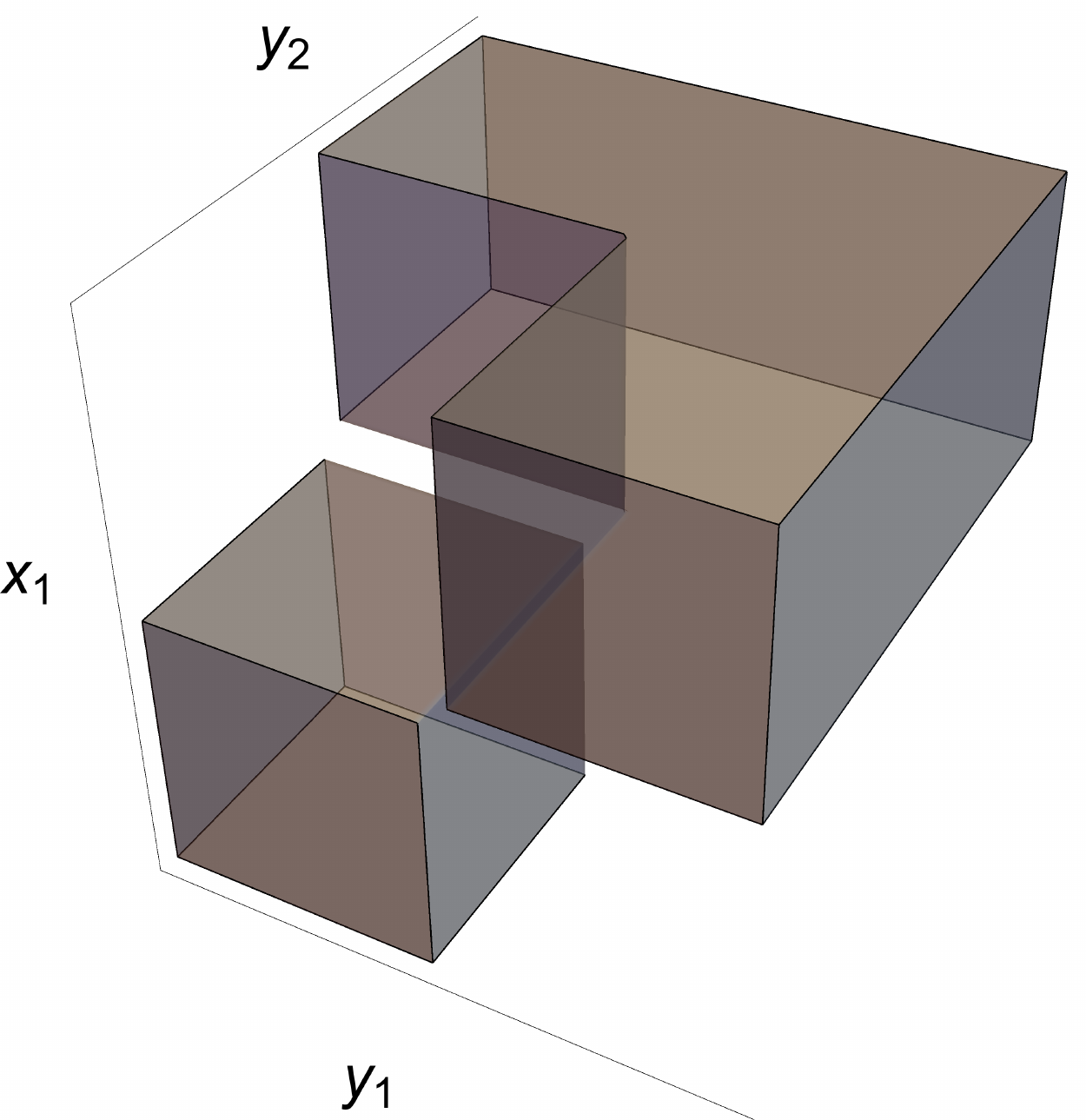}
    \includegraphics[width=.4\textwidth]{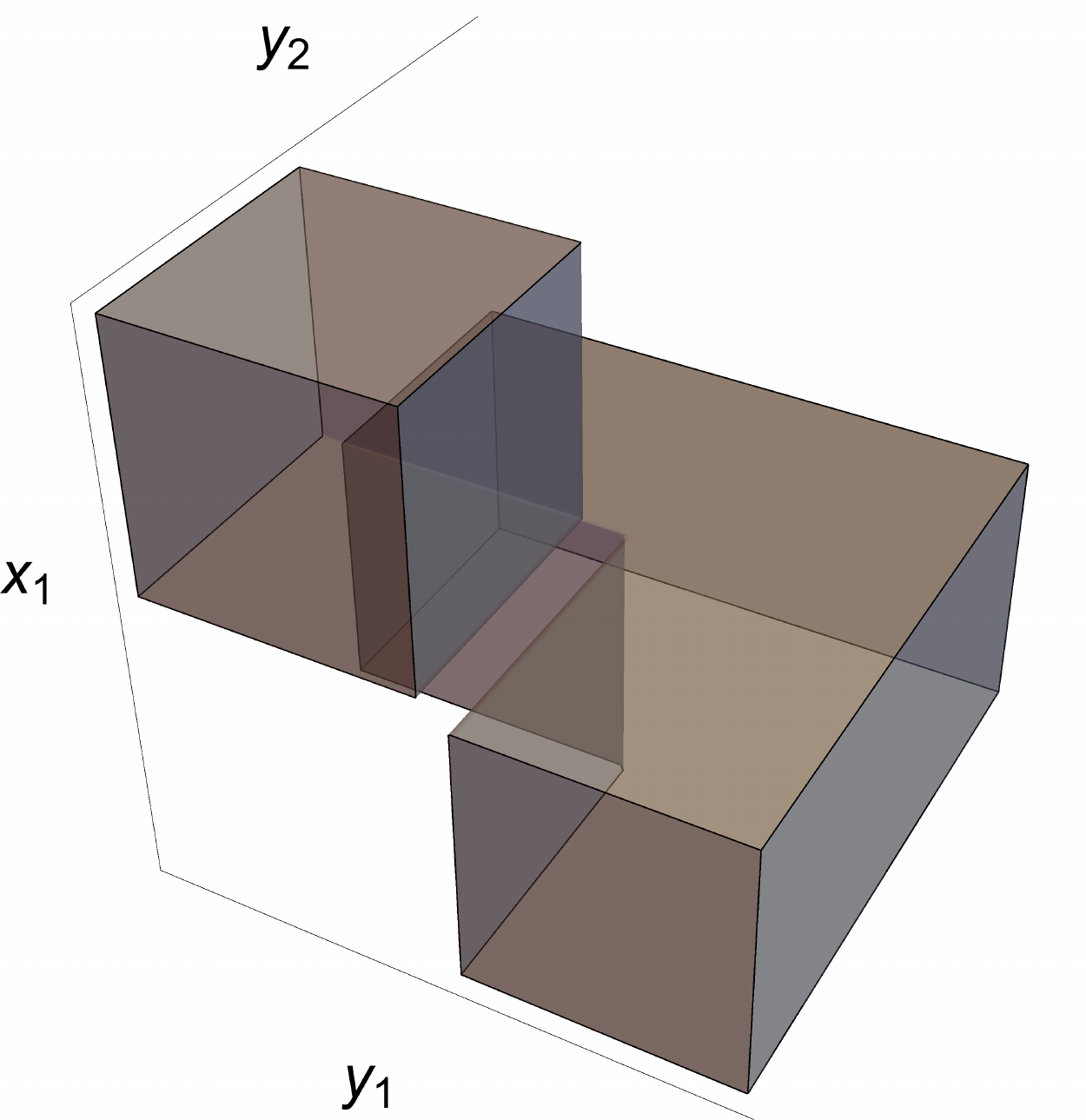}
    \caption{Concordant examples}\label{fig:tau-p-concord-1}
  \end{subfigure}
  \begin{subfigure}[t]{0.33\textwidth}
    \centering
    \includegraphics[width=.8\textwidth]{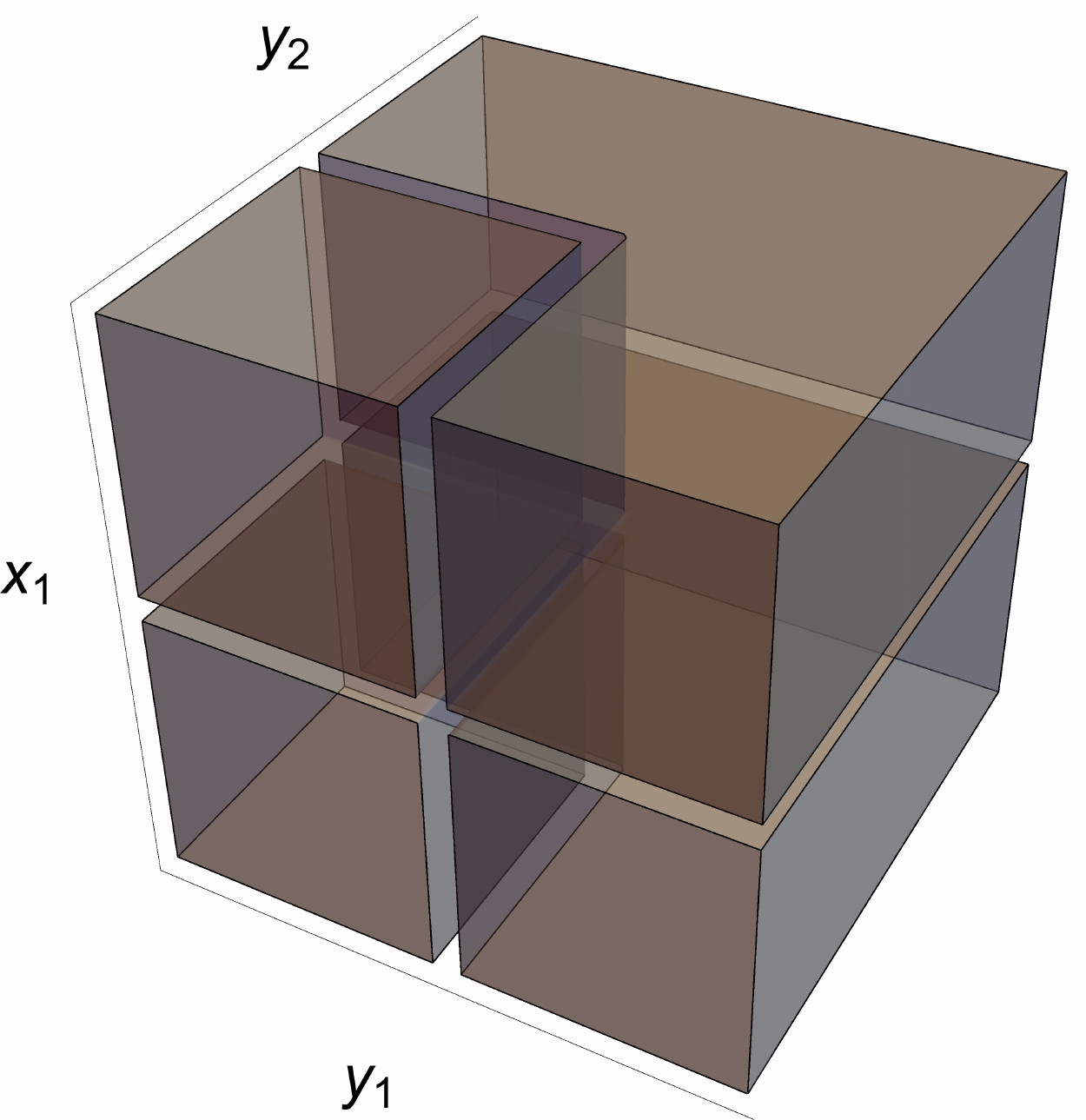}
    \caption{Discordant example}\label{fig:tau-p-discord}
  \end{subfigure}
  \caption{
  The bivariate sign covariance $\tau^*$ can be defined in terms of the probability of concordance and discordance of four points in $\mathbb{R}^2$ \citep[Figure 3]{BergsmaDassios14}. Our multivariate extension $\tau^*_P$ is based on higher-dimensional generalizations of concordance and discordance.  For illustration, let $x^1,...,x^4\in\bR$ and $y^1,...,y^4\in\bR^2$.  Considering either plot in panel (a), if precisely two tuples $(x^i,y^i)$ fall in each of the two gray regions, then the four tuples are concordant for $\tau^*_P$, but other types of concordance exist.  Considering panel (b), if exactly one $(x^i,y^i)$ lies in each of the gray regions, here the lower two regions are just translated copies of the top regions, then the four tuples are discordant; again, other types of discordance exist. Unlike in the bivariate case, points may be simultaneously concordant and discordant with respect to $\tau^*_P$.
  }\label{fig:tau-p-concord-discord}
\end{figure}


\section{Preliminaries}\label{sec:preliminaries}

\subsection{Manipulating Random and Fixed Vectors}

We begin by establishing conventions and notation used throughout
the paper.  Let
\begin{align*}
  (Z_1,\dots, Z_{r+s}) = Z = (X,Y) = ((X_1,\dots,X_r), (Y_1,\dots,Y_s))
\end{align*}
be a random vector taking values in $\bR^{r+s}$, and let
$(X^i,Y^i)=Z^i$ for $i\in\bZ_{>0}$ be a sequence of independent and
identically distributed\ copies of $Z$.  When $X$ and $Y$ are
independent we write $X\indep Y$, otherwise we write
$X\cancel\indep Y$. We let $F_{XY},F_X,$ and $F_Y$ denote the
\emph{cumulative distribution functions} for $(X,Y)$, $X$, and $Y$,
respectively.

We will
require succinct notation to describe (permuted) tuples of vectors. 
For any $n\geq 1$, define $[n] = \{1,\dots,n\}$. Let
$w^1,\dots,w^n\in \bR^d$. Then for any
$i_1,\dots,i_m,\ j_1,\dots,j_k \in [n]$, let
\begin{align*}
  w^{i_1,\dots,i_m} &= w^{(i_1,\dots,i_m)} = (w^{i_1},\dots,w^{i_m}) \quad \text{and} \quad \\
  (w^{i_1,\dots,i_m}, w^{j_1,\dots,j_k}) &= (w^{i_1},\dots,w^{i_m}, w^{j_1},\dots,w^{j_k}).
\end{align*}
If $[n]$ appears in the superscript of a vector it should be interpreted as an ordered vector, that is, we let $w^{[n]} = w^{(1,\dots,n)} = (w^1,\dots,w^n)$.

Let $S_n$ be the symmetric group.  For $\sigma\in S_n$ and $w^{[n]}\in \bR^{d\times n}$, let
\begin{align*}
  \sigma w^{[n]} = (w^{\sigma^{-1}(1)},\dots,w^{\sigma^{-1}(n)}).
\end{align*} 
This defines a (left) group action of $S_n$ on $\bR^{d\times n}$ that we will encounter often. As our convention is that $[n]$ is a tuple when in a
superscript, we have that $\sigma w^{[n]} = w^{\sigma[n]}$
for all $w^{[n]}\in\bR^{d\times n}$.  We 
stress that
$\sigma(1,\dots,n) =(\sigma^{-1}(1),\dots,\sigma^{-1}(n)) \not=
(\sigma(1),\dots,\sigma(n))$ in general.

\subsection{Hoeffding's $D$}

The bivariate setting from~(\ref{eq:hoeffding-D-R2}) immediately extends to a
multivariate version of Hoeffding's $D$ for the random vectors $X$ and
$Y$ by defining
\begin{align*}
  D(X,Y) = \int_{\bR^r\times \bR^s} (F_{XY}(x,y) - F_X(x)F_Y(y))^2 \dF_{XY}(x,y).
\end{align*}
Since $X\indep Y$ if and only if $F_{XY}(x,y) = F_X(x)F_Y(y)$, it is
clear that $X\indep Y$ implies $D(X,Y) = 0$. The converse need not always be true as the next example shows.

\begin{example} \label{exmp:hoeffding-failure}
  Let $Z=(X,Y)$ be a bivariate distribution with $P(X=1,Y=0) = P(X=0,Y=1) = 1/2$. Then clearly $X$ and $Y$ are not independent but we have that
  \begin{multline*}
    D(X,Y) = \frac{1}{2}(F_{XY}(1,0) - F_{X}(1)F_Y(0))^2 + \frac{1}{2}(F_{XY}(0,1) - F_{X}(0)F_Y(1))^2 \\
           = \frac{1}{2}(1/2 - 1\cdot 1/2)^2 + \frac{1}{2}(1/2 - 1/2 \cdot 1)^2 \;=\; 0.
  \end{multline*}
\end{example}

Thus, $D(X,Y)$ is I-consistent but not D-consistent in general. It is,
however, consistent for a large class of continuous distributions.  

\begin{theorem}[Multivariate version of Theorem 3.1 in
  \citeauthor{Hoeffding48},
  \citeyear{Hoeffding48}] \label{thm:hoeffding-consistent} Suppose $X$
  and $Y$ have a continuous joint density
  $f_{XY} = \frac{\partial}{\partial x_1\ \dots\ \partial
    x_r\ \partial y_1\ \dots\ \partial y_s}F_{XY}$ and continuous
  marginal densities $f_X$ and $f_Y$. Then
  $D(X,Y) = 0$ if and only if $X\indep Y$.
\end{theorem}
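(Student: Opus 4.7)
The forward direction $X \indep Y \Rightarrow D(X,Y) = 0$ is immediate, since independence gives $F_{XY} = F_X F_Y$ pointwise and the integrand in the definition of $D$ vanishes; the substance lies in the converse. Assume $D(X,Y) = 0$ and set $G(x,y) = F_{XY}(x,y) - F_X(x) F_Y(y)$, which is continuous since all three CDFs are. Writing $dF_{XY} = f_{XY}\, dx\, dy$, the hypothesis reads $\int G(x,y)^2 f_{XY}(x,y)\, dx\, dy = 0$, so $G = 0$ Lebesgue-a.e.\ on the open set $U := \{(x,y) : f_{XY}(x,y) > 0\}$. Continuity of $G$ then promotes this to $G \equiv 0$ on all of $U$.

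Next, the plan is to differentiate. Since the three densities are continuous, the three CDFs admit continuous mixed partial derivatives equal to the respective densities, so on the open set $U$,
\[
\frac{\partial^{r+s} G}{\partial x_1 \cdots \partial x_r\, \partial y_1 \cdots \partial y_s}(x,y) \;=\; f_{XY}(x,y) - f_X(x) f_Y(y) \;=:\; h(x,y).
\]
Because $G \equiv 0$ on $U$, this gives $h \equiv 0$ on $U$. On the complement $\{f_{XY} = 0\}$ one has $h = -f_X f_Y \leq 0$, so $h \leq 0$ everywhere on $\bR^{r+s}$. Combining $h \leq 0$ with $\int h = 1 - 1 = 0$ and continuity of $h$ forces $h \equiv 0$, hence $f_{XY} \equiv f_X f_Y$, which upon integration yields $F_{XY} \equiv F_X F_Y$ and thus $X \indep Y$.

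The step I expect to be the main obstacle is the passage from the local identity on $U$ to the global one: vanishing of $D$ alone pins $G$ down only on $\mathrm{supp}(f_{XY})$, and in principle $F_X F_Y$ could disagree with $F_{XY}$ off this support (as Example~\ref{exmp:hoeffding-failure} already shows in a discrete setting). The continuous-density hypothesis is what closes this gap twice over: it makes the mixed-partial calculation on $U$ rigorous, and then the combination of $h \leq 0$ with $\int h = 0$ leverages continuity to upgrade the almost-everywhere conclusion to the pointwise identity $f_{XY} \equiv f_X f_Y$.
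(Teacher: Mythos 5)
Your proof is correct, and it is essentially the classical argument of Hoeffding's Theorem 3.1 that the paper simply cites (declaring the multivariate case ``analogous''): vanishing of $D$ forces $F_{XY}-F_XF_Y\equiv 0$ on the open set where $f_{XY}>0$, differentiating gives $f_{XY}=f_Xf_Y$ there, and the sign-plus-integral argument ($f_{XY}-f_Xf_Y\le 0$ off that set with total integral zero) together with continuity upgrades this to the global identity. So you have supplied in full the multivariate extension the paper leaves implicit, with no gaps.
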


\begin{proof}
  The bivariate case is treated in Theorem 3.1 in \cite{Hoeffding48}.  The proof of the multivariate case is analogous.
\end{proof}

Example \ref{exmp:hoeffding-failure} highlights that the failure of $D(X,Y)$ to detect all dependence structures can be attributed to the measure of integration $\text{d}F_{XY}$. This suggests the following modification of $D$ which we call \emph{Hoeffding's $R$},
\begin{align*}
  R(X,Y)
  &=\int_\bR^{r+s} (F_{XY}(x,y) - F_X(x)F_Y(y))^2 \prod_{i=1}^r\dF_{X_i}(x_i) \prod_{j=1}^s\dF_{Y_j}(y_j).
\end{align*}
We suspect that it is well known that $R$ is consistent
but we could not find a compelling reference of this fact. For completeness we include a proof in the appendices.

\begin{theorem}\label{thm:prod-consistent} 
  Let $(X,Y)$ be drawn from a multivariate distribution on $\bR^r\times \bR^s$ as usual. Then $R(X,Y) \geq 0$ and $R(X,Y) = 0$ if and only if $X\indep Y$.
\end{theorem}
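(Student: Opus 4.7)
Non-negativity is immediate from the square in the integrand, and the ``if'' direction follows since $X\indep Y$ gives $F_{XY}=F_XF_Y$ pointwise. The content is the converse: assuming $R(X,Y)=0$, show $X\indep Y$. Write $h(x,y)=F_{XY}(x,y)-F_X(x)F_Y(y)$ and $\nu=\prod_{i=1}^r dF_{X_i}\otimes\prod_{j=1}^s dF_{Y_j}$; then $R(X,Y)=0$ is equivalent to $h=0$ $\nu$-a.e., and the task is to upgrade this to $h\equiv 0$ on $\bR^{r+s}$, which gives $F_{XY}=F_XF_Y$ pointwise and hence $X\indep Y$ by the uniqueness of measures determined by their joint CDFs.

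The plan is to isolate a one-dimensional lemma and iterate it over the $r+s$ coordinates via Fubini. The lemma I would prove is: if $g:\bR\to\bR$ is right-continuous, is constant on every open interval on which a CDF $F$ is constant, is left-continuous at every continuity point of $F$, and equals zero $dF$-a.e., then $g\equiv 0$. The proof is a case analysis on $x_0\in\bR$: points $x_0\notin\text{supp}(F)$ are handled by shifting to the left endpoint of the ambient null interval (either $-\infty$, where $h$ vanishes trivially in our application because the relevant marginal evaluates to $0$, or into $\text{supp}(F)$, reducing to a support case); atoms are dispatched directly since $dF(\{x_0\})>0$ forces $g(x_0)=0$; non-atom right-limit points of $\text{supp}(F)$ are handled by right-continuity along a zero-sequence in $(x_0,x_0+\epsilon)\cap\text{supp}(F)$; and the remaining right-isolated non-atom points are handled by left-continuity along a zero-sequence in $(x_0-\epsilon,x_0)\cap\text{supp}(F)$. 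All three structural hypotheses are inherited by each coordinate slice of our specific $h$: right-continuity from $F_{XY}$ and $F_XF_Y$; constancy on intervals of constancy of the marginal because the relevant CDF difference is bounded by $P(z<Z_k\le z+\delta)=0$ there; and left-continuity at continuity points of the marginal because no atom for $Z_k$ at $z$ forces no joint atom either.

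With the 1D lemma in hand, I iterate. Writing $\nu=dF_{Z_1}\otimes\nu^{(-1)}$ where $\nu^{(-1)}=\prod_{k\neq 1}dF_{Z_k}$, Fubini gives that for $\nu^{(-1)}$-a.e.\ $(z_2,\dots,z_{r+s})$ the slice $h(\cdot,z_2,\dots)$ vanishes $dF_{Z_1}$-a.e.\ in $z_1$, and the lemma upgrades each such slice to identically zero, so the set $A_1:=\{(z_2,\dots):h(z_1,z_2,\dots)=0\text{ for all }z_1\}$ has full $\nu^{(-1)}$-measure. A second application of Fubini to $A_1$ produces a full $\nu^{(-1,-2)}$-measure set on which, for every $z_1$, the map $z_2\mapsto h(z_1,z_2,z_3,\dots)$ vanishes $dF_{Z_2}$-a.e.; the 1D lemma in $z_2$ upgrades this to ``for all $(z_1,z_2)$''. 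Iterating through all $r+s$ coordinates strips away every remaining ``a.e.'' qualifier and delivers $h\equiv 0$. The main obstacle throughout is the careful handling of the support structure of an arbitrary marginal in the 1D lemma --- atoms, continuous parts, and gaps each demand separate treatment, and the genuinely delicate case is the right-isolated non-atom support point, where right-continuity alone is insufficient and one must invoke left-continuity; this left-continuity of $h$ is a structural consequence of $h$ being a difference of CDFs (no marginal atom at $z$ precludes a joint atom there) rather than a generic property, and is what ultimately makes the theorem true.
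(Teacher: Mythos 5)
Your proof is correct, but it is organized quite differently from the paper's. The paper argues by contraposition: given a witness $(x,y)$ with $F_{XY}(x,y)\neq F_X(x)F_Y(y)$, each coordinate is replaced by the leftmost point attaining the same marginal CDF value, the discrepancy is shown to survive this normalization, and an explicit box $B_\delta\times C_\delta$ of positive product-of-marginals measure is constructed (left intervals at non-atom coordinates, singletons at atoms) on which the discrepancy stays nonzero, forcing $R>0$. You instead prove the direct implication by isolating a univariate lemma --- right-continuous, constant on intervals where $F$ is constant, left-continuous at continuity points of $F$, and zero $dF$-a.e.\ implies identically zero --- and iterating it coordinatewise via Fubini to strip away the a.e.\ qualifiers. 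The two arguments rest on exactly the same three regularity facts about distribution functions (right-continuity; left-continuity of the discrepancy in a coordinate precisely where that marginal has no atom; constancy of the discrepancy across gaps in a marginal's support), and your case analysis is complete: atoms, right-limit points of the support, right-isolated non-atom support points (which, as you need, are automatically left-limit points of the support), and gap points reduced to the left endpoint of their null interval. What your route buys is modularity --- the delicate CDF reasoning is confined to a reusable one-dimensional statement and the multivariate bookkeeping is pure measure theory --- at the price of a small measurability check for the sets $A_k$ (dispatched by replacing ``for all $z_1$'' with ``for all rational $z_1$'' via right-continuity), which a full write-up should make explicit. The paper's route avoids induction over coordinates but must control all $r+s$ coordinates simultaneously inside one limiting argument.
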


\subsection{Bergsma--Dassios Sign-Covariance $\tau^*$} \label{sec:tau-star-intro}

\cite{BergsmaDassios14} defined $\tau^*$ only for bivariate distributions so let $r=s=1$ for this section. While $\tau^*$ has a natural definition in terms of concordant and discordant quadruples of points, we will present an alternative definition that will be more useful for our purposes. First for any $w^{[4]}\in\bR^4$ let $I_{\tau^*}(w^{[4]}) = 1_{[w^1,w^2<w^3,w^4]}$ where $w^1,w^2<w^3,w^4$ if and only if $\max(w^1,w^2) < \min(w^3,w^4)$. Then, as is shown by \cite{BergsmaDassios14}, we have that
\begin{align}
  \tau^*(X,Y) &= E\Big[\Big(I_{\tau^*}(X^{[4]}) + I_{\tau^*}(X^{4,3,2,1}) - I_{\tau^*}(X^{1,3,2,4}) - I_{\tau^*}(X^{4,2,3,1})\Big) \label{eq:tau-star}\\
  &\hspace{15mm}\cdot \Big(I_{\tau^*}(Y^{[4]}) + I_{\tau^*}(Y^{4,3,2,1}) - I_{\tau^*}(Y^{1,3,2,4}) - I_{\tau^*}(Y^{4,2,3,1})\Big) \Big].\nonumber
\end{align}
While \cite{BergsmaDassios14} conjecture that $\tau^*$ is consistent for all bivariate distributions, the proof of this statement remains elusive. The current understanding of the consistency of $\tau^*$ is summarised by the following theorem.

\begin{theorem}[Theorem 1 of \citeauthor{BergsmaDassios14}, \citeyear{BergsmaDassios14}]\label{thm:tau-star-consistency}
  Suppose $(X,Y)$ are drawn from a bivariate continuous distribution, discrete distribution, or a mixture of a continuous and discrete distribution. Then $\tau^*(X,Y)\geq 0$ and $\tau^*(X,Y) = 0$ if and only if $X\indep Y$.
\end{theorem}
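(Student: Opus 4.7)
The plan is to mirror the Bergsma--Dassios argument: first express $\tau^*$ as the covariance of signed sums of indicators, then collapse this covariance into the integral of a manifestly non-negative integrand, and finally characterise the zero set separately on the continuous, discrete, and mixed parts.

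\textbf{Step 1 (I-consistency).} I would define $\phi(w^{[4]}) = I_{\tau^*}(w^{[4]}) + I_{\tau^*}(w^{4,3,2,1}) - I_{\tau^*}(w^{1,3,2,4}) - I_{\tau^*}(w^{4,2,3,1})$ so that (\ref{eq:tau-star}) becomes $\tau^*(X,Y) = E[\phi(X^{[4]})\phi(Y^{[4]})]$. Under $X \indep Y$ the expectation factors, and by exchangeability of $X^1,\ldots,X^4$ each of the four marginal expectations of $I_{\tau^*}$ coincides after relabelling (each equals $P(\max(X^1,X^2) < \min(X^3,X^4))$), so the signed sum telescopes to zero and $\tau^*(X,Y) = 0$.

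\textbf{Step 2 (squared representation and non-negativity).} The central identity I would derive is that, after integrating out the independent copies, $E[\phi(X^{[4]})\phi(Y^{[4]})]$ collapses to the integral of a squared rectangle second-difference of $F_{XY}$. Rewriting each $I_{\tau^*}$ as a product of indicators of the form ``$a,b < c,d$'' and carefully matching signs across the four permutations, I expect an identity of the form
\begin{align*}
\tau^*(X,Y) = c_0 \int \bigl[F_{XY}(x,y) + F_{XY}(x^-,y^-) - F_{XY}(x^-,y) - F_{XY}(x,y^-)\bigr]^2 \dmu(x,y),
\end{align*}
for a constant $c_0 > 0$ and a non-negative product measure $\mu$ built from $F_X$ and $F_Y$. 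Non-negativity $\tau^*(X,Y) \ge 0$ is then immediate.

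\textbf{Step 3 (iff on the three classes).} If the integrand above vanishes $\mu$-almost everywhere, write $\Delta_{XY}(x,y)$ for the bracketed rectangle second-difference. For continuous distributions, differentiating gives $f_{XY}(x,y) = f_X(x)f_Y(y)$ almost everywhere, hence $X \indep Y$. For discrete distributions, $\Delta_{XY}(x,y)$ evaluated at an atom equals $P(X=x, Y=y) - P(X=x)P(Y=y)$, so its vanishing on all atoms is equivalent to independence. For a mixture, I would partition $\bR^2$ according to the atomic and absolutely continuous components of each marginal and apply the preceding arguments on each piece.

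\textbf{Main obstacle.} The technical crux is Step 2: correctly identifying the rearrangement of the four indicators in $\phi$ into a single squared rectangle increment and isolating the appropriate weighting measure $\mu$. Distributions with atoms are delicate since $I_{\tau^*}$ uses strict inequalities, so the values of $\phi$ at tied configurations must be tracked carefully. I would first establish the identity in the tie-free case and then extend by right-continuity of $F_{XY}$ together with an explicit accounting of $I_{\tau^*}$ on tied quadruples.
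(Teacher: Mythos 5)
First, note that the paper does not supply its own proof of this result: it is quoted as Theorem 1 of \citet{BergsmaDassios14}, so there is no internal argument to compare yours against. Judged on its own terms, your Step 1 is fine and matches the I-consistency argument used elsewhere in the paper (Proposition \ref{prop:general-src-results}). The gap is in Step 2, and it is fatal. The bracketed quantity you propose to square,
\begin{align*}
F_{XY}(x,y)+F_{XY}(x^-,y^-)-F_{XY}(x^-,y)-F_{XY}(x,y^-),
\end{align*}
is by inclusion--exclusion exactly $P(X=x,\,Y=y)$, the mass of the single point $(x,y)$. For any continuous distribution this is identically zero, so your claimed identity would force $\tau^*\equiv 0$ on the continuous class --- contradicting the very theorem you are trying to prove (and also your own Step 3, where you silently replace this quantity by $P(X=x,Y=y)-P(X=x)P(Y=y)$). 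You appear to be transplanting the structure of Hoeffding's $D$ and $R$, which genuinely are integrals of $(F_{XY}-F_XF_Y)^2$ against a measure; but there the measure arises from a \emph{fifth} observation acting as the integration variable (the order-$5$ indicator $I_{D,d}$ in Proposition \ref{prop:standard-measures-are-srcs}). The kernel of $\tau^*$ has order $4$ and no such free point, so no analogous collapse into an integrated squared increment occurs.

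There is also a structural reason the program cannot be repaired as stated: if $\tau^*$ admitted any representation $c_0\int \Delta(x,y)^2\,\dmu(x,y)$ with $\mu$ of sufficiently full support and with $\Delta\equiv 0$ equivalent to independence, then nonnegativity and the characterisation of the zero set would follow for \emph{all} bivariate distributions, settling the Bergsma--Dassios conjecture, which the paper explicitly notes remains open. The restriction in the theorem to continuous, discrete, and mixed distributions is precisely the footprint of the fact that no manifestly nonnegative representation is known: the actual argument conditions on two of the four points, expresses $\tau^*$ through the probabilities of the quadrants determined by the rectangle those two points span, and establishes nonnegativity by a delicate case analysis treating the atomic and continuous parts separately. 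A correct proof must engage with that case analysis (or supply a genuinely new idea); an integral-of-a-square identity is not available, and your Step 3 inherits this defect.
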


Theorem \ref{thm:tau-star-consistency} does not apply to any singular distributions; for instance, we are not guaranteed that $\tau^*>0$ when $(X,Y)$ are generated uniformly on the unit circle in $\bR^2$.

\section{Symmetric Rank Covariance} \label{sec:src}

\subsection{Definition and Examples}

We now introduce a new class of nonparametric dependence measures that
depend on $X$ and $Y$ only through their joint ranks.

\begin{definition}[Matrix of Joint Ranks] \label{def:joint-rank-matrix}
  Let $w^{[m]}\in\bR^{d\times m}$. Then the \emph{joint rank matrix} of $w^{[m]}$ is the $[m]$-valued $d\times m$ matrix with $i,j$ entry
  \begin{align*}
    \cR(w^{[m]})_{ij} = 1 + \sum_{k=1}^m 1_{[w^{k}_i < w^{j}_i]},
  \end{align*}
  that is, $\cR(w^{[m]})_{ij}$ is the rank of $w^j_i$ among $w^1_i,\dots,w^m_i$ for $i\in[d]$.
\end{definition}

\begin{definition}[Rank Indicator Function]
  A \emph{rank indicator function of order $m$ and dimension $d$} is a
  function $I:\bR^{d\times m}\to \{0,1\}$ such that
  $I(\cR(w^{[m]})) = I(w^{[m]})$ for all $w^{[m]}\in\bR^{d\times m}$.
  In other words, $I$ depends on its arguments only through their
  joint ranks.
\end{definition}

\begin{definition}[Symmetric Rank
  Covariance]\label{def:symmetric-rank-covariance}
  Let $I_X$ and $I_Y$ be rank indicator functions that have equal
  order $m$ and are of dimensions $r$ and $s$, respectively.  Let $H$
  be a subgroup of the symmetric group $S_m$ with an equal number of even 
  and odd permutations.  Define
  \begin{align}
    \mu_{I_X,I_Y,H}(X,Y) = E\Big[\Big(\sum_{\sigma \in H}
    \sign(\sigma)\ I_{X}(X^{\sigma[m]})\Big)\ 
    \Big(\sum_{\sigma \in H}\sign(\sigma)\ 
    I_{Y}(Y^{\sigma[m]})\Big)\Big].  \label{eq:src} 
  \end{align}
  Then a measure of dependence $\mu$ is a \emph{Symmetric Rank
  Covariance} if there is a scalar $c>0$ and  a triple $(I_X,I_Y,H)$ as specified above such that $\mu=c\ \mu_{I_X,I_Y,H}$.  
  More generally, $\mu$
  is a \emph{Summed Symmetric Rank Covariance} if it is
  the sum of several Symmetric Rank Covariances.
 \end{definition}
 
Some of the symmetric rank covariances we consider have the two rank indicator functions equal, so $I_X=I_Y=I$.  In this case, we also use the abbreviation $\mu_{I,H}=\mu_{I,I,H}$.

\begin{remark}
Recall from Section \ref{sec:tau-star-intro} that for any $z^{[4]}\in \bR^4$ we write $z^1,z^2 < z^3,z^4$ to mean $\max(z^1,z^2) < \min(z^3,z^4)$. To simplify the definitions of rank indicator functions we generalise this notation as follows. Let $\sim$ be any binary relation on $\bR^d$. Then for $z^{[l]}\in\bR^{d\times l}$ and $w^{[k]}\in\bR^{d\times k}$ we write $z^1,\dots,z^l \sim w^{1},\dots,w^k$ to mean $z^i \sim w^j$ for all $(i,j)\in[l]\times [k]$.
\end{remark}

It is easy to show that many existing nonparametric measure of dependence are Symmetric Rank Covariances.

\begin{proposition}\label{prop:standard-measures-are-srcs}
	Let $X$ and $Y$ take values in $\bR^r$ and $\bR^s$, respectively.  Consider the permutation groups $H_{\tau} = \langle (1\ 2) \rangle$ and $H_{\tau^*} = \langle (1\ 4), (2\ 3) \rangle$.   \smallskip
    
    (i) Bivariate case ($r=s=1$):   Kendall's $\tau$, its square $\tau^2$, and $\tau^*$ of Bergsma--Dassios are Symmetric Rank Covariances. Specifically, 
    \begin{align*}
     \tau=\mu_{I_{\tau},H_{\tau}}, \quad \tau^2= \mu_{I_{\tau^2},H_{\tau^*}}, \quad \text{and}\quad \tau^*=\mu_{I_{\tau^*},H_{\tau^*}},
  \end{align*}
   where the one-dimensional rank indicator functions are defined as
   \begin{align*}
      I_{\tau^*}(w^{[4]}) = I_{[w^1,w^2<w^3,w^4]}, \quad
     I_{\tau}(w^{[2]}) = I_{[w^1<w^2]}, \quad \text{and} \quad I_{\tau^2}(w^{[4]}) = I_{\tau}(w^{1,4})I_{\tau}(w^{2,3}).
  \end{align*}
       
  (ii) General case ($r,s\ge 1$): Both $D$ and $R$ are Symmetric Rank Covariances.  Specifically,
  \begin{align*}
     D=\frac{1}{4}\mu_{I_{D,r},I_{D,s},H_{\tau^*}} \quad \text{and} \quad  R=\frac{1}{4}\mu_{I_{R,r},I_{R,s},H_{\tau^*}}
  \end{align*}
   where for any dimension $d\ge 1$ and $w^1,w^2,\dots \in \bR^d$, we define
  \begin{align*}
     I_{D,d}(w^{[5]}) &= I_{[w^1, w^2\preceq w^5]}I_{[w^3,w^4\not\preceq w^5]},  &
      I_{R,d}(w^{[4+d]}) &= \prod_{i=1}^{d}I_{[w^1_i,w^2_i \leq w_i^{4+i} < w^3_i,w^4_i]}
  \end{align*}
  with $w^i\preceq w^j$ if and only if $w^i_\ell\leq w^j_\ell$ for all $\ell \in [d]$.
\end{proposition}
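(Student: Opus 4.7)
The plan is to verify each identity by unrolling the defining signed sum $\sum_{\sigma\in H}\sign(\sigma)\,I(w^{\sigma[m]})$ for the proposed pair $(I,H)$ and matching the result to the classical formula. A unifying observation that I will exploit throughout part (ii) is that $H_{\tau^*}=\{e,(1\,4),(2\,3),(1\,4)(2\,3)\}$ is the direct product $\langle(1\,4)\rangle\times\langle(2\,3)\rangle$, so whenever the indicator factors across the two index blocks $\{1,4\}$ and $\{2,3\}$, the signed sum factors as a product of two smaller signed sums; each then couples with its $Y$-counterpart via independence of the disjoint copy groups $\{(X^1,Y^1),(X^4,Y^4)\}$ and $\{(X^2,Y^2),(X^3,Y^3)\}$.

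For part (i), Kendall's $\tau$ is immediate: summing $\sign(\sigma)I_\tau(X^{\sigma[2]})$ over $H_\tau=\{e,(1\,2)\}$ gives $I_{[X^1<X^2]}-I_{[X^2<X^1]}$, and the expectation of its product with the $Y$-analogue equals $E[\sign((X^1-X^2)(Y^1-Y^2))]=\tau$. For $\tau^*$, I will unfold the four elements of $H_{\tau^*}$ under the convention $\sigma w^{[4]}=(w^{\sigma^{-1}(1)},\dots,w^{\sigma^{-1}(4)})$ to recover exactly the four signed summands that appear in~\eqref{eq:tau-star}. For $\tau^2$, the indicator $I_{\tau^2}(w^{[4]})=I_\tau(w^{1,4})I_\tau(w^{2,3})$ already factors across the two blocks, so the signed sum collapses to $(I_{[X^1<X^4]}-I_{[X^4<X^1]})(I_{[X^2<X^3]}-I_{[X^3<X^2]})$; the full expectation then splits by independence of the two copy groups into $\tau\cdot\tau$.

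For Hoeffding's $D$, I will set $A_j=I_{[X^j\preceq X^5]}$ and $B_j=1-A_j=I_{[X^j\not\preceq X^5]}$, so that $I_{D,r}(X^{[5]})=A_1A_2B_3B_4$. Because $(1\,4)$ swaps the pair $(A_1,B_1)$ with $(A_4,B_4)$ and $(2\,3)$ performs the analogous swap on $\{2,3\}$, the four signed terms rearrange as $(A_1B_4-A_4B_1)(A_2B_3-A_3B_2)$, which the complementarity $A_j+B_j=1$ collapses to $(A_1-A_4)(A_2-A_3)$. Writing $C_j=I_{[Y^j\preceq Y^5]}$ and applying the same reduction to $I_{D,s}$, the full product regroups as $[(A_1-A_4)(C_1-C_4)]\cdot[(A_2-A_3)(C_2-C_3)]$, whose two factors depend on disjoint copy groups and are therefore independent given $(X^5,Y^5)$. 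A direct expansion shows that each bracket has conditional mean $2(F_{XY}(X^5,Y^5)-F_X(X^5)F_Y(Y^5))$, and the outer expectation produces $4D(X,Y)$.

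Hoeffding's $R$ follows the same template after one bookkeeping change: the ``test point'' for integration against $\prod_i\dF_{X_i}$ is assembled coordinatewise from distinct auxiliary copies, so that $T=(X^5_1,X^6_2,\dots,X^{4+r}_r)$ has independent coordinates with the required marginals, and analogously on the $Y$-side. The product over $i$ in $I_{R,r}$ collapses to a product of four multivariate indicators at this single diagonal test point, so the $H_{\tau^*}$-factorisation from the $D$ proof carries over; combined with the auxiliary identity $F_{XY}(t,u)-F_X(t)F_Y(u)=\tfrac12 E[(I_{[X^1\le t]}-I_{[X^4\le t]})(I_{[Y^1\le u]}-I_{[Y^4\le u]})]$, this delivers the factor of $4$. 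I expect the main obstacle to be precisely the combinatorial factorisation of the signed sum over $H_{\tau^*}$ into a two-block product; it hinges on the direct-product structure of $H_{\tau^*}$ together with the per-copy complementarity of the indicators, and once that reduction is in place each of the four identities reduces to a short conditional-expectation computation.
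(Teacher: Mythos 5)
Your part (i) and your treatment of Hoeffding's $D$ are correct, and the route is organised differently from the paper's. The paper starts from $(F_{XY}-F_XF_Y)^2$, expands it as $c^++c^--2d$, identifies each of the three pieces with $E[I_D(X^{[5]})I_D(Y^{\sigma[5]})]$ for the four $\sigma\in H_{\tau^*}$, and finishes with Lemma \ref{lem:alternate-src-forms}; for $\tau^2$ it invokes the product-closure construction of Proposition \ref{prop:general-src-results} rather than verifying directly. You instead work forward from the signed sum, using the direct-product structure $H_{\tau^*}=\langle(1\,4)\rangle\times\langle(2\,3)\rangle$ to factor it as $(A_1B_4-A_4B_1)(A_2B_3-A_3B_2)$, collapsing via $A_j+B_j=1$ to $(A_1-A_4)(A_2-A_3)$, and then exploiting conditional independence of the two copy blocks $\{1,4\}$ and $\{2,3\}$ given $Z^5$. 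That factorisation is a genuine structural observation not made explicit in the paper, and it makes the emergence of the square, and the factor $4=2\cdot 2$, transparent. Both approaches amount to the same underlying computation, but yours is cleaner for $D$ and unifies $\tau^2$, $\tau^*$, and $D$ under one mechanism.

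The gap is in the $R$ case. Your claim that the $D$ factorisation ``carries over'' silently reuses the complementarity $B_j=1-A_j$, but for $I_{R,d}$ the constraint on $w^3,w^4$ is $w^{4+i}_i<w^3_i,w^4_i$ in \emph{every} coordinate, i.e.\ $B_j$ is the indicator that $w^j$ strictly dominates the test point $T$ coordinatewise, which is not the complement of $A_j=I_{[w^j\preceq T]}$ once $d>1$. The algebraic factorisation $(A_1B_4-A_4B_1)(A_2B_3-A_3B_2)$ still holds, but the collapse to $(A_1-A_4)(A_2-A_3)$ fails, and the conditional mean of each block becomes $2(pq-gh)$ where $p,q,g,h$ are the probabilities of the four corner orthants $\{X\preceq T, Y\preceq U\}$, $\{X\succ T, Y\succ U\}$, $\{X\preceq T, Y\succ U\}$, $\{X\succ T, Y\preceq U\}$; for $r>1$ or $s>1$ these events do not exhaust the sample space and $pq-gh$ does not reduce to $F_{XY}(T,U)-F_X(T)F_Y(U)$, so your auxiliary identity does not apply as stated (it is fine when $r=s=1$). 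The paper's own proof elides exactly this step with ``following essentially identical steps,'' so the difficulty is partly inherited from the statement of $I_{R,d}$, but your sketch as written would not close it. A second, smaller bookkeeping issue: as described, the $X$- and $Y$-side test points both draw on copies $5,6,\dots$, whereas the product measure $\prod_i\text{d}F_{X_i}\prod_j\text{d}F_{Y_j}$ requires all $r+s$ test coordinates to be mutually independent, so the two sides must take their test coordinates from disjoint auxiliary copies (forcing a common order $4+r+s$ rather than $4+r$ and $4+s$).
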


\begin{remark}
  The bivariate dependence measure Spearman's $\rho$ can
  be written as
  \begin{align*}
    \rho(X,Y) &= 6\ E[ I_{[X^1<X^2<X^3]} \ \big(I_{[Y^1<Y^2<Y^3]}+ I_{[Y^1<Y^3<Y^2]} +I_{[Y^2<Y^1<Y^3]} \\
 &\hspace{42mm} - I_{[Y^3<Y^1<Y^2]} - I_{[Y^2<Y^3<Y^1]} - I_{[Y^3<Y^2<Y^1]}\big)].
  \end{align*}
  In light of Lemma \ref{lem:alternate-src-forms} below, one 
  might expect
  $\rho$ to be a Symmetric Rank Covariance. However, upon examing
  which of the above indicators are negated, one quickly notes that
  the permutations do not respect the $\sign$ operation of the
  permutation group $S_3$.  For instance,  
  $I_{[Y^1<Y^2<Y^3]}$ and $I_{[Y^1<Y^3<Y^2]}$ are related through a single
  transposition and yet the terms have the same sign above. While it seems difficult
  to prove conclusively that $\rho$ is not a Symmetric Rank
  Covariance, this suggests that it is
  not. Somewhat surprisingly however, $\rho$ is a Summed Symmetric
  Rank Covariance which can be seen by expressing $\rho$ as
  \begin{align*}
    \rho(X,Y) = 3\ E\Big(b(X^{[3]})b(Y^{[3]}) + b(X^{[3]})b(Y^{[1,3,2]}) +b(X^{[3]})b(Y^{[2,1,3]})\Big)
  \end{align*}
  where $b(z^{[3]}) = I_{[z^1<z^2<z^3]} - I_{[z^3<z^2<z^1]}$ for all $z^i\in\bR$.
\end{remark}

\subsection{General Properties}

While many interesting properties of Symmetric Rank Covariances
depend on the choice of group $H$ and indicators $I_X,I_Y$, there are
several properties which hold for all such choices.

\begin{proposition} \label{prop:general-src-results} Let $\mu$ be
  Symmetric Rank Covariance. Then $\mu$ is nonparametric and
  I-consistent.  If $\nu$ is another Symmetric Rank Covariance, then
  so is the product $\mu\nu$.
\end{proposition}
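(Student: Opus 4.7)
The plan is to verify each of the three claims in turn, exploiting only the defining data of a Symmetric Rank Covariance: the rank-indicator property, the signed sum over $H$, and the balance condition $\sum_{\sigma\in H}\sign(\sigma)=0$.

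First, for monotonic invariance. Fix strictly increasing $f_1,\dots,f_r$ and $g_1,\dots,g_s$ and let $\widetilde X_i=f_i(X_i)$, $\widetilde Y_j=g_j(Y_j)$. Since $f_i$ is strictly increasing, the joint rank matrix $\cR(\widetilde X^{[m]})$ equals $\cR(X^{[m]})$ almost surely, and analogously for $Y$. Because $I_X$ and $I_Y$ are rank indicator functions, $I_X(\widetilde X^{\sigma[m]})=I_X(X^{\sigma[m]})$ and $I_Y(\widetilde Y^{\sigma[m]})=I_Y(Y^{\sigma[m]})$ for every $\sigma\in H$. Plugging into \eqref{eq:src} gives $\mu_{I_X,I_Y,H}(\widetilde X,\widetilde Y)=\mu_{I_X,I_Y,H}(X,Y)$.

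For I-consistency, assume $X\indep Y$. Since the $(X^i,Y^i)$ are i.i.d.\ and $X\indep Y$, the vector $(X^1,\dots,X^m)$ is independent of $(Y^1,\dots,Y^m)$. The sum $A_X=\sum_{\sigma\in H}\sign(\sigma)\,I_X(X^{\sigma[m]})$ is a function of the $X$-samples and $A_Y$ is a function of the $Y$-samples, so $\mu(X,Y)=E[A_XA_Y]=E[A_X]\,E[A_Y]$. By the exchangeability of i.i.d.\ samples, $E[I_X(X^{\sigma[m]})]=E[I_X(X^{[m]})]$ for every $\sigma\in S_m$, so
\begin{align*}
E[A_X]=E[I_X(X^{[m]})]\sum_{\sigma\in H}\sign(\sigma)=0,
\end{align*}
because $H$ contains equally many even and odd permutations. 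Hence $\mu(X,Y)=0$.

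For the product, write $\mu=\mu_{I_X,I_Y,H}$ of order $m$ and $\nu=\mu_{J_X,J_Y,K}$ of order $n$. Since $(\mu\nu)(X,Y)$ is a product of two expectations, we lift it to a single expectation by taking $m+n$ i.i.d.\ copies and splitting: the quantity computed from $(X^1,\dots,X^m,Y^1,\dots,Y^m)$ is independent of the one computed from $(X^{m+1},\dots,X^{m+n},Y^{m+1},\dots,Y^{m+n})$, so
\begin{align*}
\mu(X,Y)\nu(X,Y)=E\Big[(A_XB_X)(A_YB_Y)\Big],
\end{align*}
where $A_X,A_Y$ use indices $1,\dots,m$ and $B_X,B_Y$ use indices $m+1,\dots,m+n$. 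Embed $H\times K$ into $S_{m+n}$ by letting $H$ act on $\{1,\dots,m\}$ and $K$ on $\{m+1,\dots,m+n\}$; then $\sign((\sigma,\tau))=\sign(\sigma)\sign(\tau)$, and $H\times K$ again has equally many even and odd elements because $|H\times K|/2=(|H|/2)|K|=|H|(|K|/2)$. Define
\begin{align*}
I'_X(w^{[m+n]})=I_X(w^{1,\dots,m})\,J_X(w^{m+1,\dots,m+n}),
\end{align*}
and $I'_Y$ analogously; these remain rank indicator functions because the joint ranks of any sub-tuple are determined by $\cR(w^{[m+n]})$. Expanding the double sum
\begin{align*}
\sum_{(\sigma,\tau)\in H\times K}\sign((\sigma,\tau))\,I'_X(X^{(\sigma,\tau)[m+n]})=A_XB_X
\end{align*}
and similarly for $Y$ identifies $\mu\nu=\mu_{I'_X,I'_Y,H\times K}$, which is a Symmetric Rank Covariance with $c=1$.

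The only mildly subtle step is the product: one must be careful to use disjoint blocks of i.i.d.\ copies so that the two factors become the expectation of a single product, and to check that the direct product $H\times K\le S_{m+n}$ still satisfies the even/odd balance condition. The rest is a direct consequence of the rank-indicator definition and the vanishing of $\sum_{\sigma\in H}\sign(\sigma)$.
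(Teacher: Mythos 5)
Your proof is correct and follows essentially the same route as the paper's: monotonic invariance via preservation of joint ranks, I-consistency via factoring the expectation and using $\sum_{\sigma\in H}\sign(\sigma)=0$, and closure under products by passing to $m+n$ i.i.d.\ copies and embedding the two groups as a commuting product inside $S_{m+n}$ (the paper realises your $H\times K$ as $H\ol{H}$ with $\ol{H}$ a conjugate of the second group acting on the last $n$ indices, which is the same construction).
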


The property for products in particular justifies squaring Symmetric Rank Covariances, as was
done for bivariate rank correlations in \cite{Leung2016}. Later, it will be useful to express a Symmetric Rank Covariances in an equivalent form.

\begin{lemma}\label{lem:alternate-src-forms}
  In reference to Equation \eqref{eq:src}, we have 
  \begin{align}
    \mu_{I_X,I_Y,H}(X,Y) &= |H|\ E\Big[I_X(X^{[m]})\ \Big(\sum_{\sigma\in H}\sign(\sigma)\ I_Y(Y^{\sigma[m]})\Big)\Big] \label{eq:mu-with-x}\\
    &= |H|\ E\Big[I_Y(Y^{[m]})\ \Big(\sum_{\sigma\in H}\sign(\sigma)\ I_X(X^{\sigma[m]})\Big)\Big].\label{eq:mu-with-y}
  \end{align}
\end{lemma}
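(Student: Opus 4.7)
The plan is to expand the product of the two signed sums in the definition of $\mu_{I_X,I_Y,H}$ into a double sum, then use the i.i.d.\ structure of the $Z^i$'s together with the group structure of $H$ to collapse the double sum into a single sum multiplied by $|H|$.

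First I would write
\begin{align*}
\mu_{I_X,I_Y,H}(X,Y) = \sum_{\sigma\in H}\sum_{\tau\in H} \sign(\sigma)\sign(\tau)\, E\bigl[I_X(X^{\sigma[m]})\, I_Y(Y^{\tau[m]})\bigr].
\end{align*}
Since $Z^1,\dots,Z^m$ are i.i.d., for every $\pi\in S_m$ the tuple $(Z^{\pi(1)},\dots,Z^{\pi(m)})$ has the same joint distribution as $(Z^1,\dots,Z^m)$. Chasing the convention $\sigma w^{[m]} = (w^{\sigma^{-1}(1)},\dots,w^{\sigma^{-1}(m)})$, this invariance translates into
\begin{align*}
E\bigl[I_X(X^{\sigma[m]})\, I_Y(Y^{\tau[m]})\bigr] = E\bigl[I_X(X^{(\sigma\pi^{-1})[m]})\, I_Y(Y^{(\tau\pi^{-1})[m]})\bigr]
\end{align*}
for any $\pi\in S_m$. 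Taking $\pi=\sigma$ kills the $X$-side permutation and replaces the $Y$-side permutation by $\tau\sigma^{-1}$. Using that $\sign$ is a homomorphism so $\sign(\sigma)\sign(\tau)=\sign(\tau\sigma^{-1})$, each inner summand becomes
\begin{align*}
\sign(\tau\sigma^{-1})\, E\bigl[I_X(X^{[m]})\, I_Y(Y^{(\tau\sigma^{-1})[m]})\bigr].
\end{align*}

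Next I would exploit that $H$ is a group: for fixed $\sigma$, the map $\tau\mapsto \rho=\tau\sigma^{-1}$ is a bijection of $H$ onto itself, so the inner sum over $\tau$ equals $\sum_{\rho\in H}\sign(\rho)\,E[I_X(X^{[m]}) I_Y(Y^{\rho[m]})]$, which no longer depends on $\sigma$. Summing over the $|H|$ choices of $\sigma$ pulls out the factor $|H|$ and yields exactly \eqref{eq:mu-with-x}. Finally, the identity \eqref{eq:mu-with-y} follows either by swapping the roles of $X$ and $Y$ (choosing $\pi=\tau$ instead of $\pi=\sigma$ above) or, equivalently, by a verbatim repetition of the same argument on the other factor.

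The main obstacle is purely bookkeeping: the convention $\sigma w^{[m]}=w^{\sigma^{-1}[m]}$ makes it easy to slip a sign or invert a composition, so I would carefully verify the identity $(\sigma\pi^{-1})^{-1}(i) = \pi(\sigma^{-1}(i))$ once at the start and then apply the i.i.d.\ relabeling mechanically. Once that indexing is pinned down, the rest is a one-line group-theoretic reindexing.
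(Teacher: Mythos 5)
Your proof is correct and follows essentially the same route as the paper's: relabel the i.i.d.\ copies by the $X$-side permutation to strip it off, absorb its sign into the $Y$-side via $\sign(\sigma)\sign(\tau)=\sign(\tau\sigma^{-1})$, and reindex the inner sum using $H\sigma^{-1}=H$. The only cosmetic difference is that you expand the product into a full double sum whereas the paper keeps the inner sum intact while manipulating each outer term.
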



\section{Generalizing Hoeffding's D and $\tau^*$}\label{sec:generalizing}

\subsection{Discretization Perspective} \label{sec:discretization}

In this section we introduce a collection of Summed Symmetric Rank
Covariances that are consistent and can be regarded as natural generalizations
of Hoeffding's $D$ and $R$. We begin by showing that
$D$ and $R$ are accumulations of, possibly infinitely many,
independence measures between binarized versions of $X$ and $Y$.  

\begin{definition}\label{def:binarization}
  Let $Z=(Z_1,\dots,Z_q)$ be an $\bR^d$-valued random vector, and let
  $z\in\bR^d$.  The \emph{binarization of $Z$ at $z$} is the random
  vector
  \begin{align*}
    B^Z(w) = (1_{[Z_1> z_1]},\dots,1_{[Z_d> z_d]}).
  \end{align*}
  We call $z$ the \emph{cutpoint} of the binarization.
\end{definition}

For any $z=(x,y)\in\bR^{r+s}$ we have $B^Z(z) = (B^X(x),
B^Y(y))$. Clearly $B^Z(z)$, $B^X(x)$, $B^Y(y)$ are discrete random
variables taking values in $\{0,1\}^{r+s},\ \{0,1\}^r$, and $\{0,1\}^s$ respectively. The
cutpoint $z$ divides $\bR^r\times \bR^s$ into $2^{r+s}$ orthants
corresponding to the states of $B^Z(z)$.   We index these orthants by
vectors $\ell \in \{0,1\}^{r+s}$, and define
\begin{align*}
  p(z)_{\ell} = P(B^Z(z) = \ell) = P(Z_i \lesseqgtr_{\ell_i} z_i,\ i\in [r+s]) \hspace{3mm} \text{where} \hspace{3mm} \lesseqgtr_{\ell_i}\ \equiv \twopartdef{\leq}{\ell_i=0,}{>}{\ell_i=1.}
\end{align*}
Let $p(z)$ be the $2\times \dots\times 2=2^{r+s}$ tensor with
coordinates $p(z)_\ell$.  Independence between $B^X(x)$ and $B^Y(y)$
can be characterised in terms of the rank of a flattening, or
matricization, of $p(z)$.  Let $M(x,y)$ be the real $2^r\times 2^s$
matrix with entries
\begin{align*}
  M(x,y)_{\ell_X\ell_Y} = p(z)_{\ell_X\ell_Y}
\end{align*}
for indices $\ell_X\in\{0,1\}^r$, $\ell_Y\in\{0,1\}^s$ that are
concatenated to form $\ell_X\ell_Y$.  It then holds that
$B^X(x)\indep B^Y(y)$ if and only if $M(x,y)$ has rank 1
\citep[Chapter 3]{oberwolfach}.

The matrix $M(x,y)$ has rank 1 if and only if all of its $2\times 2$
minors vanish, that is, for all $\ell_X,\ell'_X\in\{0,1\}^r$
and $\ell_Y,\ell'_{Y}\in\{0,1\}^s$ we have
\begin{align*}
  0&=M(x,y)_{\ell_X\ell_Y}M(x,y)_{\ell'_X\ell'_Y} -
     M(x,y)_{\ell'_X\ell_Y}M(x,y)_{\ell_X\ell'_Y}  \\
    &= p(z)_{\ell_X\ell_Y}p(z)_{\ell'_X\ell'_Y} -
      p(z)_{\ell'_X\ell_Y}p(z)_{\ell_X\ell'_Y}.
\end{align*}
One may easily show that $X\indep Y$ if and only if $B^X(x)\indep B^Y(y)$ for all $x,y$. This suggests defining a measure of dependence equal to the integral of the sum of squared minors of the above form. To recover both $D$ and $R$, however, we will need to generalise slightly by considering $2\times 2$ \emph{block minors} defined below. These block minors correspond to the fact that $B^X(x)\indep B^Y(y)$ if and only if $P(B^X(x)\in L,\ B^Y(y)\in R) = P(B^X(x)\in L)\ P(B^Y(y)\in R)$ for all $L\subset\{0,1\}^r,\ R\subset\{0,1\}^s$.

\begin{definition}\label{def:block-minor}
  Let $L,L' \subset\{0,1\}^r$ and $R,R' \subset\{0,1\}^s$ be nonempty subsets with $L\cap L'=\emptyset$ and $R\cap R'=\emptyset$. Then the \emph{$2\times 2$ block minor of $M(x,y)$ along $(L,L',R,R')$} is the value
  \begin{align*}
    &(\sum_{\substack{\ell_X\in L \\ \ell_Y \in R}} p(z)_{\ell_X\ell_Y})(\sum_{\substack{\ell_X\in L' \\ \ell_Y \in R'}} p(z)_{\ell_X'\ell_Y'}) - (\sum_{\substack{\ell_X'\in L' \\ \ell_Y \in R}} p(z)_{\ell_X'\ell_Y})(\sum_{\substack{\ell_X\in L \\ \ell_Y \in R'}} p(z)_{\ell_X\ell_Y'}) \\
    &= \sum_{\substack{\ell_X\in L \\ \ell_Y \in R}}\sum_{\substack{\ell_X\in L \\ \ell_Y \in R}}(p(z)_{\ell_X\ell_Y}p(z)_{\ell_X'\ell_Y'} - p(z)_{\ell_X'\ell_Y}p(z)_{\ell_X\ell_Y'}).
  \end{align*}
\end{definition}

\begin{proposition}\label{prop:block-minors}
$B^X(x)\indep B^Y(y)$ if and only if all $2\times 2$ block minors of $M(x,y)$ vanish.
\end{proposition}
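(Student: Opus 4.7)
The plan is to verify both implications, using the already-cited characterization that $B^X(x)\indep B^Y(y)$ if and only if the matrix $M(x,y)$ has rank one.

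For the \emph{only if} direction, I would simply expand each block minor directly from its definition. Independence of $B^X(x)$ and $B^Y(y)$ says that $P(B^X(x)\in A,\, B^Y(y)\in B) = P(B^X(x)\in A)P(B^Y(y)\in B)$ for any $A\subset\{0,1\}^r$ and $B\subset\{0,1\}^s$. Applied to the pairs $(L,R)$, $(L',R')$, $(L',R)$, $(L,R')$ appearing in the block minor along $(L,L',R,R')$, the two products in Definition \ref{def:block-minor} both equal $P(B^X(x)\in L)P(B^X(x)\in L')P(B^Y(y)\in R)P(B^Y(y)\in R')$ and hence cancel.

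For the \emph{if} direction, I would observe that every standard $2\times 2$ minor of $M(x,y)$ is a block minor: take $L=\{\ell_X\}$, $L'=\{\ell_X'\}$ with $\ell_X\neq\ell_X'$, and similarly $R=\{\ell_Y\}$, $R'=\{\ell_Y'\}$ with $\ell_Y\neq\ell_Y'$. Thus, the vanishing of all block minors implies, in particular, the vanishing of all ordinary $2\times 2$ minors, so $M(x,y)$ has rank at most one. Since $M(x,y)$ has nonnegative entries summing to one, its rank is exactly one, and by the cited characterization (Chapter 3 of the referenced \textit{oberwolfach} notes) this is equivalent to $B^X(x)\indep B^Y(y)$.

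No step looks delicate; the only minor bookkeeping is checking that the block minor as written in Definition \ref{def:block-minor} really equals $P(B^X(x)\in L, B^Y(y)\in R)P(B^X(x)\in L', B^Y(y)\in R') - P(B^X(x)\in L', B^Y(y)\in R)P(B^X(x)\in L, B^Y(y)\in R')$, which is immediate from the definitions of $p(z)$ and $M(x,y)$. Given that, the proposition reduces to the cited rank-one characterization, so the proof should be only a few lines.
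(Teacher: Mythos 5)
Your proposal is correct and follows essentially the same route as the paper's proof: the forward direction by writing the block minor as a difference of products of probabilities of the events $B^X(x)\in L$, $B^Y(y)\in R$, etc.\ and cancelling via independence, and the converse by observing that ordinary $2\times 2$ minors are block minors with singleton index sets and invoking the rank-one characterization. Your extra remark that the entries sum to one (so the rank is exactly one rather than zero) is a small point of care the paper glosses over, but otherwise the arguments coincide.
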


If $L,L',R,R'$ are singletons the $2\times 2$ block minor reduces to a usual $2\times 2$ minor. 

We now propose to assess dependence by integrating squared block minors.  The integration measures we allow are derived from the variables' joint distribution but may be taken to be products of marginals as encountered for the measure of dependence $R$.

\begin{definition}[Integrated Squared Minor] \label{def:ism}
  For any $d\geq 0$ let $0_d\in \bR^d$ be the vector of all zeros. A measure $\mu(X,Y)$ is called an \emph{integrated squared minor} if there exists $L\subset \{0,1\}^r\setminus \{0_r\}$, $R\subset \{0,1\}^s\setminus \{0_s\}$, $E_1,\dots,E_t$ partitioning $[r]$, and $F_1,\dots,F_{t}$ partitioning $[s]$, such that
  \begin{align*}
    \mu(X,Y) = \int_{\bR^{r+s}} A(x,y)^2\dlambda_{XY}(x,y)
  \end{align*}
  where $A(x,y)$ is the $2\times 2$ block minor of $M(x,y)$ along $(\{0_r\}, L, \{0_s\},R)$, and the cumulative distribution function $\lambda_{XY}$ can be written as
  \begin{align*}
    \lambda_{XY}(x,y) = \prod_{1\leq i\leq t} F_{X_{E_i}Y_{F_i}}(x_{E_i},y_{F_i}).
  \end{align*}
\end{definition}

As the next proposition shows, all integrated square minor measures are Symmetric Rank Covariances.

\begin{proposition}\label{prop:isms-are-srcs}
  Let $\mu$ be an Integrated Squared Minor as in Definition \ref{def:ism}, then $\mu$ is a Symmetric Rank Covariance. In particular, we have $\mu=\frac{1}{4}\mu_{I_X,I_Y,H}$ where $H = H_{\tau^*}$ and
  \begin{align*}
    I_X(w^{[4+t]}) &= \sum_{\ell^X\in L}\prod_{i=1}^t1_{[w^1_{E_i},w^2_{E_i}\ \preceq\ w^{4+i}_{E_i}]}\prod_{j\in E_i} 1_{[w_j^3,w_j^4\ \lesseqgtr_{\ell^X_j}\ w_j^{4+i}]} \quad (w^{[4+t]}\in\bR^{r\times (4+t)}), \\
    I_Y(w^{[4+t]}) &= \sum_{\ell^Y\in R}\prod_{i=1}^t1_{[w^1_{F_i},w^2_{F_i}\ \preceq\ w^{4+i}_{F_i}]}\prod_{j\in F_i} 1_{[w_j^3,w_j^4\ \lesseqgtr_{\ell^Y_j}\ w_j^{4+i}]} \quad (w^{[4+t]}\in\bR^{s\times (4+t)}).
  \end{align*}
  Moreover, if $L= \{0,1\}^r\setminus \{0_r\}$ and $R =\{0,1\}^s\setminus \{0_s\}$ we have that $\mu = D$ when $\lambda_{XY}=F_{XY}$ and $\mu = R$ when $\lambda_{XY}=F_{X_1}\cdots F_{X_r}\cdot F_{Y_1}\cdots F_{Y_s}$. 
\end{proposition}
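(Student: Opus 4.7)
The plan is to realize the integrated squared minor $\int A(x,y)^2 \dlambda_{XY}(x,y)$ as a $(4+t)$-fold expectation matching the form in Definition \ref{def:symmetric-rank-covariance}. First I would write each entry $p(z)_{\ell_X\ell_Y}$ probabilistically as $E[1_{[B^X(x)^i=\ell_X,\,B^Y(y)^i=\ell_Y]}]$ under one i.i.d.\ copy $(X^i,Y^i)$ of $(X,Y)$. Using independence of copies, the block minor becomes a difference of two 2-copy expectations,
\[
A(x,y) \;=\; E\big[\phi_X(X^{1,2};x)\,\phi_Y(Y^{1,2};y)\big] \;-\; E\big[\psi_X(X^{1,2};x)\,\psi_Y(Y^{1,2};y)\big],
\]
for specific products $\phi_\bullet,\psi_\bullet$ of $\preceq$-indicators and orthant-indicators. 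Squaring and exploiting independence of copies $\{1,2\}$ from $\{3,4\}$ expresses $A(x,y)^2$ as a single 4-copy expectation with four signed summands whose sign pattern is $+,-,-,+$.

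The key combinatorial step is to match these four signed summands with the four elements $e,\,(1\,4),\,(2\,3),\,(1\,4)(2\,3)$ of $H_{\tau^*}$, whose signs are also $+,-,-,+$. Because positions $1,2$ of $I_X$ enforce the symmetric role ``$\preceq$ the cutpoint'' and positions $3,4$ enforce the symmetric role ``lies in an $L$-orthant of the cutpoint'', acting by $\sigma\in H_{\tau^*}$ redistributes the i.i.d.\ copies $X^1,\dots,X^4$ between these two roles, and the four resulting role-assignments account for all four summands of the expanded $A(x,y)^2$. Integration over $(x,y)\sim\lambda_{XY}$ is then absorbed into the SRC framework by introducing $t$ additional independent copies $Z^{4+i}=(X^{4+i},Y^{4+i})$ whose $(E_i,F_i)$-coordinates supply the relevant pieces of the cutpoint; the product form $\lambda_{XY}=\prod_i F_{X_{E_i}Y_{F_i}}$ is precisely what is needed for the joint law of the cutpoint to agree with that of the corresponding coordinates of these auxiliary copies. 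Lemma \ref{lem:alternate-src-forms} then identifies the resulting $(4+t)$-fold expectation with $\tfrac14 \mu_{I_X,I_Y,H_{\tau^*}}$.

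For the ``Moreover'' part I would take $L=\{0,1\}^r\setminus\{0_r\}$ and $R=\{0,1\}^s\setminus\{0_s\}$, and collapse the sums in $A(x,y)$ via the telescoping identities $\sum_{\ell_X\in L}p(z)_{\ell_X 0_s}=F_Y(y)-F_{XY}(x,y)$, $\sum_{\ell_Y\in R}p(z)_{0_r\ell_Y}=F_X(x)-F_{XY}(x,y)$, and $\sum_{\ell_X\in L,\,\ell_Y\in R}p(z)_{\ell_X\ell_Y}=1-F_X(x)-F_Y(y)+F_{XY}(x,y)$. A short algebraic simplification then yields $A(x,y)=F_{XY}(x,y)-F_X(x)F_Y(y)$, so that the choice $\lambda_{XY}=F_{XY}$ immediately recovers Hoeffding's $D$ while $\lambda_{XY}=\prod_i F_{X_i}\prod_j F_{Y_j}$ recovers $R$. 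The main obstacle will be the permutation bookkeeping in the second step: one has to verify that the four role-assignments produced by the action of $H_{\tau^*}$ exactly reproduce the four signed summands of $A(x,y)^2$ including all cross-orthant contributions when $|L|>1$ or $|R|>1$, without the cutpoint-indicator structure introducing spurious linkages between the copies.
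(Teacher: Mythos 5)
Your proposal is correct and follows essentially the same route as the paper's proof: both represent the entries of $p(z)$ as expectations over i.i.d.\ copies, expand the squared block minor into four signed terms matched to the elements of $H_{\tau^*}$ (with signs $+,+,-,-$ for $e,(1\,4)(2\,3),(2\,3),(1\,4)$), realize the integration against $\lambda_{XY}$ as an expectation over the auxiliary copies $Z^{5,\dots,4+t}$ using the product structure $\prod_i F_{X_{E_i}Y_{F_i}}$, and finish with Lemma \ref{lem:alternate-src-forms}. The ``Moreover'' part via the telescoping identities yielding $A(x,y)=F_{XY}(x,y)-F_X(x)F_Y(y)$ likewise matches the paper's computation (stated there in terms of $c^+$, $c^-$, and $d$).
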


Finally we can identify a collection of D-consistent Summed Symmetric Rank Covariances.

\begin{proposition} \label{prop:consistent-ssrcs}
  Let $L_1,\dots,L_k\subset \{0,1\}^r\setminus\{0_r\}$ and $R_1,\dots,R_k\subset \{0,1\}^s\setminus\{0_s\}$ be two collections of nonempty sets. Suppose that the sets $L_i\times R_i$ are pairwise disjoint and form a partition of $(\{0,1\}^r\setminus\{0_r\}) \times (\{0,1\}^s\setminus\{0_s\})$. For all $i\in[k]$ let
  \begin{align*}
    \mu^{joint}_i(X,Y) = \int_{\bR^{r+s}}A_i(x,y)^2\dF_{XY}(x,y)
  \end{align*}
  and
  \begin{align*}
    \mu^{prod}_i(X,Y) = \int_{\bR^{r+s}} A_i(x,y)^2\prod_{i=1}^r\text{d}F_{X_i}(x_i) \prod_{j=1}^s\text{d}F_{Y_j}(y_j)
  \end{align*}
  where $A_i(x,y)$ is the $2\times 2$ block minor along $(\{0_r\}, L_i, \{0_s\}, R_i)$. Then the Summed Symmetric Rank Covariance $\mu^{joint} = \sum_{i=1}^k\mu^{joint}_i$ is D-consistent in, at least, all cases that $D$ is; similarly $\mu^{prod} = \sum_{i=1}^k\mu^{prod}$ is D-consistent in all cases.
\end{proposition}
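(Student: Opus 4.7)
The plan is to establish that $\mu^{joint}$ and $\mu^{prod}$ are Summed Symmetric Rank Covariances, and then reduce their D-consistency to that of $D$ and $R$ via a single telescoping identity on the block minors $A_i(x,y)$.

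First, I would note that each summand $\mu_i^{joint}$ and $\mu_i^{prod}$ matches Definition \ref{def:ism} of an Integrated Squared Minor, so by Proposition \ref{prop:isms-are-srcs} each is a Symmetric Rank Covariance; hence $\mu^{joint}$ and $\mu^{prod}$ are Summed Symmetric Rank Covariances as the proposition asserts.

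The crucial step is the pointwise identity
\[
\sum_{i=1}^k A_i(x,y) \;=\; F_{XY}(x,y) - F_X(x)\,F_Y(y).
\]
To derive this, abbreviate $p^{LR}(x,y)=P(B^X(x)\in L,\,B^Y(y)\in R)$, so that, by Definition \ref{def:block-minor}, $A_i = p^{\{0_r\}\{0_s\}}\,p^{L_iR_i} - p^{L_i\{0_s\}}\,p^{\{0_r\}R_i}$ with $p^{\{0_r\}\{0_s\}}=F_{XY}(x,y)$. Because the $L_i\times R_i$ partition $(\{0,1\}^r\setminus\{0_r\})\times(\{0,1\}^s\setminus\{0_s\})$, inclusion--exclusion gives $\sum_i p^{L_iR_i} = P(X\not\leq x,\ Y\not\leq y) = 1-F_X-F_Y+F_{XY}$, and the same partition lets the products telescope into a full double sum: $\sum_i p^{L_i\{0_s\}}p^{\{0_r\}R_i} = (F_Y-F_{XY})(F_X-F_{XY})$. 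Expanding and cancelling then yields the claimed identity.

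With this identity, the D-consistency conclusions follow immediately. Since $\mu^{joint}=\sum_i\int A_i^2\,dF_{XY}$ is a sum of nonnegative terms, $\mu^{joint}(X,Y)=0$ forces $A_i(x,y)=0$ for $dF_{XY}$-a.e.\ $(x,y)$ and every $i$; summing gives $F_{XY}-F_XF_Y=0$ a.e., so $D(X,Y)=0$. Theorem \ref{thm:hoeffding-consistent} then yields $X\indep Y$ whenever its density hypotheses hold, so $\mu^{joint}$ is D-consistent in at least every case that $D$ is. The identical argument with $\prod_i dF_{X_i}\prod_j dF_{Y_j}$ in place of $dF_{XY}$ gives $\mu^{prod}(X,Y)=0\Rightarrow R(X,Y)=0$, and Theorem \ref{thm:prod-consistent} upgrades this to $X\indep Y$ unconditionally. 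The main obstacle is simply spotting the telescoping identity $\sum_i A_i = F_{XY}-F_XF_Y$; once noticed, it shows that the finer partition structure adds genuinely new SRC decompositions without disturbing the D-consistency already guaranteed by $D$ and $R$.
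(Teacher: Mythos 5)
Your proposal is correct and follows essentially the same route as the paper's proof: both rest on the observation that, because the sets $L_i\times R_i$ partition $(\{0,1\}^r\setminus\{0_r\})\times(\{0,1\}^s\setminus\{0_s\})$, the block minors satisfy $\sum_{i=1}^k A_i(x,y)=F_{XY}(x,y)-F_X(x)F_Y(y)$ (the paper encodes this in its displayed decomposition of $D$), and then pass between the vanishing of $\sum_i A_i^2$ and of $\bigl(\sum_i A_i\bigr)^2$. The paper phrases this last step as the pointwise implication $(a+b)^2>0\Rightarrow a^2+b^2>0$ applied $k$ times, while you argue the contrapositive via almost-everywhere vanishing of each $A_i$; these are the same argument.
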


\subsection{Multivariate $\tau^*$} \label{sec:multivariate-tau-star}

Recall from Proposition \ref{prop:standard-measures-are-srcs} that $\tau^* = \mu_{I_{\tau^*},H_{\tau^*}}$. Multivariate extensions of $\tau^*$ should simultaneously capture the essential characteristics of $\tau^*$ while permitting enough flexibility to define interesting measures of high-order dependence. As a first step to distilling these essential characteristics, it seems natural that any multivariate extension of $\tau^*$ uses the same permutation subgroup $H_{\tau^*}$. 

\begin{remark}
There are 30 distinct subgroups of $S_4$ exactly 20 of which have an equal number of even and odd permutations and thus could be used in the definition of a Symmetric Rank Covariance. Given these many possible choices it may seem surprising that $H_{\tau^*}$ appears in the definition of so many existing measures of dependence, namely $\tau^*$, $\tau^2$, $D$, and $R$. Some intuition for the ubiquity of $H_{\tau^*}$ can be gleaned from the proof of Proposition \ref{prop:standard-measures-are-srcs} where we show that $H_{\tau^*}$ arises naturally from an expansion of $(F_{XY}(x,y)-F_X(x)F_Y(y))^2$.
\end{remark}

It now remains to find an appropriate generalization of $I_{\tau^*}$. To better characterise $I_{\tau^*}$ we require the following definition.

\begin{definition}[Invariance Group of an Indicator]
  Let $I$ be a rank indicator function of order $m$ and dimension $d$.   The permutations $\sigma\in S_{m}$ such that $I(\sigma w^{[m]}) = I(w^{[m]})$ for all $w^{[m]}\in\bR^{d\times m}$ form a group that we refer to as the \emph{invariance group $G$ of $I$}. For any Symmetric Rank Covariance $\mu_{I_X,I_Y,H}$, let $G_X,G_Y$ be the invariance groups of $I_X$ and $I_Y$ respectively. We then call \emph{$G = G_X\cap G_Y$ the invariance group of $\mu_{I_X,I_Y,H}$}.
\end{definition}

We now single out two properties of $I_{\tau^*}$.
\begin{property}
  $I_{\tau^*}$ is a rank indicator function of order $4$.
\end{property}
\begin{property}
  The invariance group of $I_{\tau^*}$ is $\langle (1\ 2), (3\ 4) \rangle$.
\end{property}
This inspires the following definition.

\begin{definition}\label{def:multivariate-tau-stars}
  We say that an Symmetric Rank Covariance $\mu_{I_X,I_Y,H}$ is a \emph{$\tau^*$ extension} if $I_X$ and $I_Y$ are rank indicators of order $4$ with invariance group $\langle (1\ 2),(3\ 4) \rangle$ and $H=H_{\tau^*}$.
\end{definition}

From the possible $\tau^*$ extensions we consider two notable candidates.

\begin{definition}\label{def:partial-tau-star}
  For any $d\geq 1$ let $I_{P}:\bR^{d\times 4}\to \{0,1\}$ be the rank indicator where for any $w^{[4]}\in \bR^{d\times 4}$ we have $I_P(w^{[4]}) = I_{[w^3,w^4\not\preceq w^1, w^2]}$. We then call $\mu_{I_P,I_P,H_{\tau^*}}$ the \emph{multivariate partial $\tau^*$} and write $\tau^*_P = \mu_{I_P,I_P,H_{\tau^*}}$.
\end{definition}

The definition of $I_P$ is inspired by $I_D$, see Proposition \ref{prop:standard-measures-are-srcs}.

\begin{definition}\label{def:joint-tau-star}
  For any $d\geq 1$ let $I_J:\bR^{d\times 4}\to \{0,1\}$ be the rank indicator where for any $w^{[4]}\in \bR^{d\times 4}$ we have $I_J(w^{[4]}) = I_{[ w^1, w^2 \prec w^3,w^4]}$. We then call $\mu_{I_J,I_J,H_{\tau^*}}$ the \emph{multivariate joint $\tau^*$} and write $\tau^*_J = \mu_{I_J,I_J,H_{\tau^*}}$.
\end{definition}

Our definition of $\tau^*_J$ comes immediately from $\tau^*$ when replacing the total order $<$ with $\prec$, although this might be the most intuitive multivariate extension of $\tau^*$ it is easily seen to not be D-consistent as the next example shows. In both of the above definitions, the extensions reduce to being $\tau^*$ when $r=s=1$. 

\begin{example} \label{exmp:tau-j-inconsistent}
  Let $X = (X_1,\dots,X_r)$ where $r$ is even and $X_1,\dots,X_r \sim$ Bernoulli$(1/2)$ are independent. Now let $Y = \text{XOR}(X_1,\dots.,X_r)$, that is, let $Y = 1$ if $\sum_{i=1}^r X^r$ is odd and $Y= 0$ otherwise. Now letting $(X^i,Y^i)$ be independent and identically distributed replicates of $(X,Y)$, $I_{J}(X^{[4]}) = 1$ if and only if $X^1=X^2 = (0,\dots,0)$ and $X^3=X^4=(1,\dots,1)$. Thus $I_j(X^{[4]}) = 1$ implies that $Y^1 = Y^2 =Y^3=Y^4 = 0$, and hence that $\sum_{\sigma \in H_{\tau^*}}\sign(\sigma)\ I_J(Y^{\sigma[4]}) = 0$. Thus we have that $\tau^*_J(X,Y) = 0$ while $X\cancel\indep Y$.

  This behavior only occurs when $r$ is even.  If $r$ is odd, then $\tau^*_J(X,Y) = 2^{-4r + 2}$.
\end{example}

Unlike for $\tau^*_J$, we have yet to discover an example where $\tau^*_P(X,Y)$ is 0 when $X\cancel\indep Y$. This leads us to conjecture that $\tau_{P}^*$, like the subclass of measures from Section \ref{sec:discretization}, is D-consistent.


\section{Estimation via U-statistics} \label{sec:estimation-via-u-statistics}

\subsection{Standard Form of U-Statistics Estimating Symmetric Rank Covariances}

The definition of Symmetric Rank Covariances allows them to be easily estimated with U-statistics. We begin with a discussion on how the computational efficiency these U-statistics can often be improved by leveraging efficient data structures for performing orthogonal range queries. We then consider the asymptotic properties of our estimators. Somewhat surprisingly, the invariance group of an Symmetric Rank Covariance is tied to its estimator's asymptotic behavior under the null hypothesis of independence. Indeed, we will exhibit a collection of Symmetric Rank Covariances whose U-statistics are degenerate and thus have non-Gaussian asymptotic distributions. In the bivariate case, these new definitions will help show explicitly why the asymptotic distributions of the U-statistics corresponding to $D$, $R$, and $\tau^*$ have the same, up to scaling, asymptotic distribution when $X$ and $Y$ are continuous and independent, a behavior first observed by \cite{NandyEtAl16}.

In the below we will assume we have observed a sample of $n\geq 1$ independent and identically distributed replicates of $Z=(X,Y)$, we call these $Z^1,\dots,Z^n$. Let $\mu$ be a Symmetric Rank Covariance as in Equation \eqref{eq:src} so that
\begin{align*}
      \mu(X,Y) = E\Big[\Big(\sum_{\sigma \in H} \sign(\sigma)\ I_{X}(X^{\sigma[m]})\Big)\ \Big(\sum_{\sigma \in H}\sign(\sigma)\ I_{Y}(Y^{\sigma[m]})\Big)\Big].
\end{align*}

Let $\kappa:\bR^{(r+s)\times m}\to \bR$ be the \emph{symmetrised kernel function} defined by
\begin{align} \label{eq:sym-kernel}
  \kappa(z^1,\dots,z^{m}) = \frac{1}{m!}\sum_{\sigma\in S_{m}} k(z^{\sigma[m]})
\end{align}
where the \emph{unsymmetrised kernel function} $k:\bR^{(r+s)\times m}\to \bR$ is defined by
\begin{align*}
  k(z^{[m]}) = \Big(\sum_{\sigma \in H} \sign(\sigma)\ I_{X}(x^{\sigma[m]})\Big)\ \Big(\sum_{\sigma \in H}\sign(\sigma)\ I_{Y}(y^{\sigma[m]})\Big).
\end{align*}
Then we define, for $n\geq m$ and $z^{[n]}\in\bR^{d\times n}$,
\begin{align}\label{eq:u-statistic}
  U_\mu(z^{[n]}) = \frac{1}{{n \choose m}}\sum_{1 \leq i_1<\dots<i_{m} \leq n} \kappa(z^{i_1,\dots,i_{m}}).
\end{align}
We call $U_\mu$ the \emph{U-statistic corresponding to $\mu$}.  Clearly, $U_\mu(Z^{[n]})$ is unbiased for $\mu(X,Y)$. For computational friendliness we will sometimes rewrite $\kappa$ using the following proposition.

\begin{proposition} \label{prop:rewrite-sym-kernel}
 For any $z^{[m]}\in\bR^{d\times m}$ we have that
  \begin{align}
    \kappa(z^{[m]}) &= \frac{|H|}{m!}\sum_{\gamma\in S_m} I_X(x^{\gamma[m]}) \ \Big(\sum_{\sigma\in H}\sign(\sigma)\ I_Y(y^{\sigma\gamma[m]})\Big) \label{eq:rewrite-sym-kernel-in-x}\\
    &=\frac{|H|}{m!}\sum_{\gamma\in S_m} I_Y(y^{\gamma[m]}) \ \Big(\sum_{\sigma\in H}\sign(\sigma)\ I_X(X^{\sigma\gamma[m]})\Big). \label{eq:rewrite-sym-kernel-in-y}
  \end{align}
\end{proposition}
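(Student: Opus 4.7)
The plan is to start from the definition $\kappa(z^{[m]}) = \frac{1}{m!}\sum_{\gamma\in S_m} k(z^{\gamma[m]})$, expand $k$ into its double sum over $H$, and then perform two successive changes of summation variables that crucially exploit the fact that $H$ is a subgroup of $S_m$. Only the first identity needs proof; the second follows by interchanging the roles of $X$ and $Y$.

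Substituting the definition of $k$ gives the triple sum
\begin{align*}
\kappa(z^{[m]}) = \frac{1}{m!}\sum_{\gamma\in S_m}\sum_{\sigma_1\in H}\sum_{\sigma_2\in H}\sign(\sigma_1)\sign(\sigma_2)\, I_X(x^{\sigma_1\gamma[m]})\, I_Y(y^{\sigma_2\gamma[m]}).
\end{align*}
Pulling $\sum_{\sigma_1\in H}\sign(\sigma_1)$ to the outside and relabeling $\gamma' = \sigma_1\gamma$ works because left-multiplication by $\sigma_1$ is a bijection of $S_m$ onto itself; since $\sigma_1\gamma = \gamma'$ and $\sigma_2\gamma = \sigma_2\sigma_1^{-1}\gamma'$, the inner double sum becomes $\sum_{\gamma'\in S_m} I_X(x^{\gamma'[m]})\sum_{\sigma_2\in H}\sign(\sigma_2)\, I_Y(y^{\sigma_2\sigma_1^{-1}\gamma'[m]})$. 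This is the step where the group action composition law $(\sigma_1\sigma_2)w^{[m]} = \sigma_1(\sigma_2 w^{[m]})$ is needed, which is immediate from the convention $\sigma w^{[m]}=w^{\sigma[m]}$.

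Next, reindex the inner $H$-sum by $\sigma = \sigma_2\sigma_1^{-1}$. Because $H$ is a subgroup and $\sigma_1^{-1}\in H$, right-multiplication by $\sigma_1^{-1}$ is a bijection $H\to H$, so the sum indeed ranges over all of $H$. The sign homomorphism gives $\sign(\sigma_2) = \sign(\sigma)\sign(\sigma_1)$, so the factor $\sign(\sigma_1)$ in front combines with this to produce $\sign(\sigma_1)^2 = 1$. The outer sum $\sum_{\sigma_1\in H}$ now has a constant summand, contributing $|H|$, and we obtain
\begin{align*}
\kappa(z^{[m]}) = \frac{|H|}{m!}\sum_{\gamma\in S_m} I_X(x^{\gamma[m]}) \sum_{\sigma\in H}\sign(\sigma)\, I_Y(y^{\sigma\gamma[m]}),
\end{align*}
which is \eqref{eq:rewrite-sym-kernel-in-x}. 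The identity \eqref{eq:rewrite-sym-kernel-in-y} follows by the same argument applied to the other factor, or simply by noting that $k(z^{[m]})$ is symmetric in the roles of $X$ and $Y$.

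There is no serious obstacle; the only subtle point is making sure both reindexings are bijections, which is exactly where the subgroup hypothesis on $H$ and the homomorphism property of $\sign$ enter. Both requirements are built into Definition \ref{def:symmetric-rank-covariance}, so the argument goes through cleanly.
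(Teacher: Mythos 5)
Your proposal is correct and follows essentially the same route as the paper's own proof: substitute $\gamma\mapsto\sigma_1^{-1}\gamma'$ using that left multiplication by an element of $H$ permutes $S_m$, then reindex the inner $H$-sum by $\sigma=\sigma_2\sigma_1^{-1}$ using that $H$ is a subgroup and that $\sign$ is a homomorphism, so the outer sum collapses to the factor $|H|$. The paper performs exactly these two reindexings (with $\sigma,\psi$ in place of your $\sigma_1,\sigma_2$), and likewise dispatches the second identity by symmetry.
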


\subsection{Efficient Computation}

The U-statistics defined by Equation \eqref{eq:u-statistic} are a sum over ${n\choose m}$ elements and thus, assuming the kernel function $\kappa$ can be evaluated in $m$ time, na\"{\i}vely require $O(m\ n^{m})$ time to compute. While this may be feasible for small $m$ and $n$, it will quickly become computationally prohibitive for even moderate sample sizes. 
While subsampling can be used in such cases to approximate our statistics of interest, it is not always clear how many samples of which size should be taken to obtain acceptable approximation error and, when many such samples are needed, subsampling approximations need not be fast. 
Fortunately, when specializing to the U-statistics estimating $D,R$, $\tau^*_P$, and $\tau^*_J$, we show that the use of efficient data structures for computing orthogonal range queries can reduce the asymptotic run time of the computations. While our observations do not generalise to all Symmetric Rank Covariances, as rank indicators can be made complex, there appear to be many, for instance $\tau^2$, for which a similar approach can be used to reduce run time. For especially large samples, these efficient computational strategies could be combined with subsampling procedures to more rapidly achieve small levels of approximation error.

For the remainder of this section we assume to have observed data $z^{[n]}\in\bR^{d\times n}$. Moreover, to simplify our run-time analyses, we will assume that $d$ is bounded so that for any functions $f,g:\bN\to\bN$ and $h:\bN^2\to\bN$ we have that $O(f(d) + g(d)h(n,d)) = O(h(n,d))$.

As the above defined U-statistics depend on $z^{[n]}$ only through their joint ranks we will make the further assumption that $z^{[n]} = \cR(z^{[n]})\in[n]^{d\times n}$ so that we have transformed $z^{[n]}$ into its corresponding matrix of joint ranks. The computational effort of this procedure is $O(n\ \log_2(n))$ and, as none of the algorithms we will present run in time less than this, performing this preprocessing step does not change the overall analysis.

In the bivariate case, it follows easily from the discussion by \citet{Hoeffding48} that $D$ can be computed in $O(n\ \log_2(n))$ time while, more recently, it has been shown that $\tau^*$ can be computed in $O(n^2)$ time \citep{Heller2016,WeihsEtAl16}. These computational savings largely rely on the ability to efficiently perform \emph{orthogonal range queries}.

\begin{definition}[Orthogonal Range Query]
  Let $z^{[n]}\in\bR^{d\times n}$. Then we say that the question, ``how many $z^i$ lie in $B\subset \bR^d$?'' is an \emph{orthogonal range query on $\{z^1,\dots,z^n\}$} if $B = I_1\times\dots\times I_d$ and, for $1\leq i\leq d$, we have that $I_i$ is a 1-dimensional interval of the form $(l_i,u_i),\ [l_i,u_i),\ (l_i,u_i],$ or $[l_i,u_i]$ for some $l_i,u_i\in\bR$.
\end{definition}

As the next proposition shows, see \cite{Heller2016} for the bivariate case, using a simple dynamic programming approach one may easily construct an $n^{d}$ tensor so that any orthogonal range query on $z^{[n]}$ can be answered in $O(1)$ time.

\begin{proposition} \label{prop:tensor-orth-search}
  Let $z^{[n]}\in \bR^{d\times n}$ be such that $z^{[n]} = \cR(z^{[n]})$. Then let $A\in \bN^{(n+1)\times \dots \times (n+1)}$ be a $d$-dimensional tensor indexed by elements of $\{0,1,\dots,n\}^d$ with $(i_1,\dots,i_d)\in \{0,\dots,n\}^d$ entry equaling
  \begin{align*}
    A(i_1,\dots,i_d) = \sum_{i=1}^n 1_{[z^{i} = (i_1,\dots,i_d)]}
  \end{align*}
  so that $A(i_1,\dots,i_d)$ equals the number of elements $z^i$ with value $(i_1,\dots,i_d)$. Now define $B\in \bN^{(n+1)\times \dots \times (n+1)}$ recursively so that it has $(i_1,\dots,i_d)\in \{0,\dots,n\}^d$ entry
  \begin{align*}
    B(i_1,\dots,i_d) &= 0 \quad \text{ if any $i_j=0$ and,}\\
    B(i_1,\dots,i_d) &= A(i_1,\dots,i_d) + \sum_{s=1}^n\sum_{\substack{\ell\in \{0,1\}^d\setminus \{0_d\} \\ \sum_k\ell_k=s}} (-1)^{s+1}B((i_1,\dots,i_d)-\ell) \quad \text{if otherwise}.
  \end{align*}
  Then for any $l = (l_1,\dots,l_d), u = (u_1,\dots,u_d)\in \{0,...,n\}^d$ the orthogonal range query ``how many $z^i$ lie in $B=(l_1,u_1]\times \dots \times(l_d,u_d]$?'' equals
  \begin{align*}
    \sum_{\ell\in\{0,1\}^d} (-1)^{\sum_{j=1}^d \ell_j} B(l_1^{\ell_1}\ u_1^{1-\ell_1},\ \dots,\ l_d^{\ell_d}\ u_d^{1-\ell_d}).
  \end{align*}
  When $d$ is bounded and $B$ is given, the above sum takes $O(1)$ time to compute.
\end{proposition}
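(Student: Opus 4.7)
The plan is to establish two inclusion-exclusion identities. Let $N(v) := |\{j \in [n] : z^j_k \leq v_k \text{ for all } k \in [d]\}|$ denote the cumulative count of data points dominated by $v = (i_1,\ldots,i_d)$. I will first show that $B \equiv N$, so that $B$ stores all box counts with lower corner at the origin, and then verify that the stated range-query formula correctly expresses the count in $(l_1,u_1]\times\cdots\times(l_d,u_d]$ via such box counts.

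To prove $B(v) = N(v)$, I would induct on $\sum_k i_k$. For the base case, if some $i_j = 0$ then $N(v) = 0$ because $z^j \in [n]^d$ implies $z^j_k \geq 1$ for all $k$ and $j$; the definition also sets $B(v) = 0$. For the inductive step, the key identity is
\begin{equation*}
A(v) = \sum_{\ell \in \{0,1\}^d} (-1)^{\sum_k \ell_k} N(v - \ell),
\end{equation*}
a discrete $d$-fold finite difference of $N$ that recovers the ``point mass'' at $v$. This identity follows by expanding the indicator
\begin{equation*}
1_{[z^j = v]} = \prod_{k=1}^d \bigl(1_{[z^j_k \leq i_k]} - 1_{[z^j_k \leq i_k - 1]}\bigr),
\end{equation*}
summing over $j \in [n]$, and distributing the product. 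Isolating the $\ell = 0_d$ term, which equals $N(v)$, yields exactly the recursion defining $B$; combining this with the inductive hypothesis gives $B(v) = N(v)$.

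For the second identity, for each coordinate $k$ I would write
\begin{equation*}
1_{[l_k < z^j_k \leq u_k]} = 1_{[z^j_k \leq u_k]} - 1_{[z^j_k \leq l_k]},
\end{equation*}
take the product over $k \in [d]$, expand, and sum over $j \in [n]$. This produces
\begin{equation*}
\#\{j : z^j \in (l_1,u_1]\times\cdots\times(l_d,u_d]\} = \sum_{\ell \in \{0,1\}^d} (-1)^{\sum_k \ell_k} N(v(\ell)),
\end{equation*}
where $v(\ell)_k = l_k$ if $\ell_k = 1$ and $v(\ell)_k = u_k$ if $\ell_k = 0$; substituting $N = B$ gives the formula in the proposition. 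The $O(1)$ runtime is immediate because the sum has $2^d$ terms, constant for bounded $d$, and each lookup in $B$ is $O(1)$.

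The argument has no serious obstacle; it is a bookkeeping exercise combining two standard applications of the inclusion-exclusion principle. The only subtlety worth noting is the boundary case $i_j = 0$ in the recursion for $B$, which must both terminate the induction and agree with the combinatorial interpretation $N(v) = 0$; because the preprocessing ensured $z^{[n]} = \cR(z^{[n]})$ takes values in $[n]^{d \times n}$, this holds automatically.
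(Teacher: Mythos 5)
Your proof is correct and takes the same route the paper does: the paper's entire proof is the single sentence ``This follows by a straightforward application of the inclusion-exclusion principle,'' and your two identities (the finite-difference recursion showing $B(v)=N(v)$ by induction on $\sum_k i_k$, and the $2^d$-term alternating sum expressing the box count via corner counts) are exactly the details that sentence leaves implicit. Your handling of the boundary case $i_j=0$ via the rank preprocessing is the right observation and completes the argument.
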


\begin{proof}
This follows by a straightforward application of the inclusion-exclusion principle.
\end{proof}

Unfortunately, the above tensor takes $O(n^d)$ time to construct and so, when $d\geq m$ this procedure already takes as long or longer than simply computing the U-statistic na\"{i}vely. In such cases, we find that the range-tree data structure provides a better balance between quickly computing orthogonal range queries and the effort required for its construction.

\begin{proposition}[Range-Tree Data Structure, \citeauthor{Berg2008}, \citeyear{Berg2008}]
  Let $z^{[n]}$$\in \bR^{d\times n}$. There exists a data structure, called a range-tree, which takes $O(n\ \log_2(n)^{d-1})$ time to construct and can answer any orthogonal range query on $z^{[n]}$ in $O(\log_2(n)^{d-1})$ time.
\end{proposition}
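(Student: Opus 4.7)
The plan is to verify this classical result by recursively building a multi-level tree structure on the coordinates of $z^{[n]}$, then sharpening the query time via fractional cascading. Since the statement is attributed to a textbook exposition, I would simply reproduce the standard construction and account carefully for the time and space bounds.

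First, I would treat the base case $d=1$. Sort the points by their single coordinate in $O(n\log_2 n)$ time and store them in a balanced binary search tree $T$. For a query interval $[l_1,u_1]$, locate the split node where the search paths to $l_1$ and $u_1$ diverge, and collect the $O(\log_2 n)$ canonical subtrees whose points lie entirely in $[l_1,u_1]$. If each internal node stores the size of its subtree, summing these sizes answers the query in $O(\log_2 n)$ time, which matches the claim for $d=1$.

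Next, I would set up the inductive step. Assuming a $(d-1)$-dimensional range tree can be built in $O(n\log_2(n)^{d-2})$ time and answers queries in $O(\log_2(n)^{d-2})$ time, I construct a $d$-dimensional range tree by building a primary balanced BST on the first coordinate and, at each node $v$, attaching an \emph{associated} $(d-1)$-dimensional range tree on the remaining $d-1$ coordinates of the points in the subtree rooted at $v$. Since each point appears in the associated structures of only $O(\log_2 n)$ ancestors, the total construction cost satisfies the recurrence $T(n,d) = O(n\log_2 n) + O(\log_2 n)\cdot T(n,d-1)/\log_2 n$, which unfolds to $O(n\log_2(n)^{d-1})$; the space bound is identical. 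A query reduces to locating $O(\log_2 n)$ canonical primary-tree subtrees covering $(l_1,u_1]$, then recursively querying each of their associated $(d-1)$-dimensional structures, yielding $O(\log_2(n)^{d-1})$ query time in the naive analysis, but this already gives $O(\log_2(n)^d)$; to hit $O(\log_2(n)^{d-1})$ exactly as stated, the final coordinate layer must be handled by \emph{fractional cascading}.

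The main technical obstacle is therefore the last step: replacing the innermost BST with the fractional cascading trick of Chazelle and Guibas. Concretely, at each node of the $(d-1)$-st level, instead of storing an independent sorted array on the $d$-th coordinate, I would store a single sorted array augmented with two cascade pointers into the arrays of its two children. Once the search on the first $d-1$ coordinates reaches the appropriate $O(\log_2 n)$ canonical nodes, each subsequent binary search on the $d$-th coordinate is replaced by following a cascade pointer in $O(1)$ time. Checking that the cascade pointers can be built within the $O(n\log_2(n)^{d-1})$ preprocessing budget and that they correctly translate predecessor queries across levels is the only nontrivial verification; once done, the total query time becomes $O(\log_2 n)$ for the single binary search plus $O(1)$ per canonical node over $d-1$ levels, which is $O(\log_2(n)^{d-1})$, completing the proof.
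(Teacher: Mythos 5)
The paper does not actually prove this proposition: it is quoted verbatim from the cited textbook, and the reader is referred to Section~5 of \citet{Berg2008} for the construction. Your sketch is exactly the standard argument from that reference---a multi-level balanced tree with associated $(d-1)$-dimensional structures, with fractional cascading applied at the innermost level to shave one logarithmic factor off the query time---so in substance you are supplying the proof the paper delegates to the literature, and the approach is sound.

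Two small points of care. First, the construction-time recurrence as you wrote it, $T(n,d)=O(n\log_2 n)+O(\log_2 n)\cdot T(n,d-1)/\log_2 n$, simplifies to $T(n,d)=O(n\log_2 n)+T(n,d-1)$ and telescopes to $O(d\,n\log_2 n)$, which is not the claimed $O(n\log_2(n)^{d-1})$. The correct accounting sums the costs of the associated structures over all nodes $v$ of the primary tree: each point lies in $O(\log_2 n)$ such structures, so $\sum_v n_v = O(n\log_2 n)$, and the inductive bound $T(m,d-1)=O(m\log_2(m)^{d-2})$ gives $\sum_v T(n_v,d-1) = O(\log_2(n)^{d-2})\cdot O(n\log_2 n) = O(n\log_2(n)^{d-1})$. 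Second, your base case answers a $d=1$ query in $O(\log_2 n)$ time, whereas the proposition read literally would demand $O(\log_2(n)^{0})=O(1)$; the stated bounds are really for $d\ge 2$ (for $d=1$ the query cost is $O(\log_2 n)$), which is a harmless imprecision inherited from the statement rather than a defect of your argument.
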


See Section 5 of \citet{Berg2008} for a detailed exposition on Range-Trees, along with a discussion of the above proposition, and orthogonal range queries in general. As, to the best of our knowledge, there exists no freely available completely general implementation of range-trees we make such an implementation freely available at \url{https://github.com/Lucaweihs/range-tree}.  Range-trees are closely related to binary search trees, such as Red-Black Trees, which have been previously used to efficiently compute the U-statistics corresponding to $\tau$ and $\tau^*$ \citep{Christensen2005,WeihsEtAl16}. Using these efficient data structures we obtain substantial run time savings.

\begin{table}[t]
  \def~{\hphantom{0}}
  \caption{The asymptotic run times of computing the U-statistics $U_D,U_R,U_{\tau^*_P},U_{\tau^*_J}$ on a sample  $z^{[n]}\in\bR^{d\times n}$ na\"{i}vely versus the more efficient methods described in Appendix \ref{app:efficient-computation}.}\label{table:computational-results}
  \begin{tabular}{ccc}
    \toprule
    & \multicolumn{2}{c}{Algorithm run time} \\ 
    U-Statistic & Using Efficient Orthogonal Range Queries & Na\"{i}ve \\
    \midrule
    $U_D$ & $O(n\log_2(n)^{d-1})$ & $O(n^5)$ \\
    $U_{R}$ & $O(n^d)$ & $O(n^{4+d})$ \\
    $U_{\tau^*_P}$ and $U_{\tau^*_J}$ & $O(n^2\log_2(n)^{2d-1})$ & $O(n^4)$ \\
    \bottomrule
  \end{tabular}
\end{table}

\begin{proposition} \label{prop:computational-results}
Table \ref{table:computational-results} lists the asymptotic run time of computing $U_D,U_R,U_{\tau^*_P},$ and $U_{\tau^*_P}$ when using the algorithms described in Appendix \ref{app:efficient-computation}.
\end{proposition}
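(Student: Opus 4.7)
The plan is to verify the table row by row. The naive bounds are immediate from the general form \eqref{eq:u-statistic}: once the data are replaced by their joint ranks in $O(n\log n)$ preprocessing, each kernel evaluates in $O(1)$ time, so the runtime is governed by the number of summands $\binom{n}{m}$, where $m$ is the kernel order. This gives $O(n^5)$ for $U_D$ (since $I_{D,d}$ has order $5$), $O(n^{4+d})$ for $U_R$ (since $I_{R,d}$ has order $4+d$), and $O(n^4)$ for $U_{\tau^*_P}$ and $U_{\tau^*_J}$ (since $I_P$ and $I_J$ have order $4$).

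The efficient bounds rest on a common theme: use Proposition \ref{prop:rewrite-sym-kernel} to factor out symmetric arguments so that the U-statistic is iterated only over a small set of ``pivot'' arguments, with compatible configurations of the remaining arguments counted via orthogonal range queries. For $U_D$, the indicator $I_{D,d}(w^{[5]})$ has a single pivot $w^5$; after applying \eqref{eq:rewrite-sym-kernel-in-x} the contribution from the four non-pivot arguments collapses into monomials in the counts $|\{i : z^i\preceq z^p\}|$ and $n-|\{i : z^i\preceq z^p\}|$. A range-tree built in $O(n\log_2(n)^{d-1})$ time answers each such count in $O(\log_2(n)^{d-1})$, and iterating over the $n$ pivots yields the $O(n\log_2(n)^{d-1})$ bound. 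For $U_R$, the product form $I_{R,d}(w^{[4+d]})=\prod_{i=1}^{d}1_{[\cdots]}$ has a distinct pivot per coordinate, so after symmetrization the U-statistic becomes a $d$-fold outer sum over pivots of products of range-count contributions on the remaining four arguments; precomputing the $d$-dimensional tensor of Proposition \ref{prop:tensor-orth-search} in $O(n^d)$ time reduces each inner count to $O(1)$, giving total runtime $O(n^d)$.

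For $U_{\tau^*_P}$ and $U_{\tau^*_J}$ the kernel has four arguments, two of which (say $w^1,w^2$) play a pivot role fixing a region in $\bR^{r+s}$ while the remaining two are summed as a pair. Iterating over all $O(n^2)$ ordered pivot pairs and using a range-tree on pairs---equivalently, on points in $\bR^{2d}$---to count the compatible pairs gives $O(\log_2(n)^{2d-1})$ per pivot and thus the claimed $O(n^2\log_2(n)^{2d-1})$ total; the one-time construction of this extended range-tree is of the same asymptotic order and so does not dominate.

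The main obstacle is the algebraic collapse step for $U_D$ and $U_R$: one must show that after symmetrization the quadratic form over non-pivot arguments genuinely decomposes into a constant number of products of single-pivot range counts, rather than requiring higher-order joint counts across several pivots simultaneously. A secondary difficulty is checking that the $(w^3,w^4)$-counting conditions arising in $U_{\tau^*_P}$ and $U_{\tau^*_J}$ really are orthogonal box conditions on the pair $(w^3,w^4)$, so that a range-tree on the $\binom{n}{2}$ pairs viewed as points in $\bR^{2d}$ applies directly. Given these reductions, all four runtimes follow from standard facts about range-tree query complexity; the detailed algorithms and correctness arguments are deferred to Appendix \ref{app:efficient-computation}.
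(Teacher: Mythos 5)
Your outline matches the paper's Appendix~\ref{app:efficient-computation} essentially step for step: the same pivots ($w^5$ for $U_D$, one index per coordinate for $U_R$, the pair $(w^1,w^2)$ for the $\tau^*$ statistics), the same data structures (a range-tree for $U_D$, the tensor of Proposition~\ref{prop:tensor-orth-search} for $U_R$, and a range-tree on concatenated pairs in $\bR^{2d}$ for $U_{\tau^*_P}$ and $U_{\tau^*_J}$), and the same runtime accounting, so the stated bounds are all correct. Two remarks on the points you defer. First, the ``algebraic collapse'' for $U_D$ does go through, but the quantities needed at each pivot are the four quadrant counts $C_{\preceq,\preceq},C_{\preceq,\not\preceq},C_{\not\preceq,\preceq},C_{\not\preceq,\not\preceq}$, which require the marginal counts $|\{i:x^i\preceq x^{i_5}\}|$ and $|\{i:y^i\preceq y^{i_5}\}|$ in addition to $|\{i:z^i\preceq z^{i_5}\}|$; all are still orthogonal range queries, so the bound is unaffected. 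Second, and more substantively, the check you flag for $U_{\tau^*_P}$ and $U_{\tau^*_J}$ actually fails as stated: after inclusion--exclusion one of the events to be counted is $\{(k,l): y^k\preceq y^l\}$, which couples the two halves of the point $(z^k,z^l)\in\bR^{2d}$ and is therefore \emph{not} an orthogonal box constraint, so a range-tree on the raw pair set does not ``apply directly.'' The paper repairs this by precomputing, in $O(n^2)$ time, the subset $\cD^*_{pair}$ of pairs satisfying $y^k\preceq y^l$ and building a second range-tree on it; the remaining six constraints are genuine box constraints, so each term involving that event becomes a box query on $\cD^*_{pair}$, and the $O(n^2\log_2(n)^{2d-1})$ bound survives. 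Your proposal identifies the right obstacle but does not supply this fix, which is the one nontrivial idea in that part of the argument.
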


\subsection{Null Asymptotics}\label{sec:null-asymptotics}

Determining the asymptotic distribution of $U_\mu$ under the null hypothesis of independence, that $X\indep Y$, requires an understanding of the functions
\begin{align*}
  \kappa_i(z^1,\dots,z^i) = E[\kappa(z^1,\dots,z^i,Z^{i+1},\dots,Z^{m})].
\end{align*}
To this end, we introduce some simplifying lemmas and propositions.

\begin{lemma} \label{lem:kernel-flip}
  Suppose that $X\indep Y$. Let $S\subset[m]$ and let $G$ be the invariance group corresponding to $\mu$. Partition $H$ into equivalence classes $E_1,\dots,E_t$ where $h,h'\in H$ are equivalent if there exists $g\in G$ such that $gh(i) = h'(i)$ for all $i\in S$. If each $E_i$ contains an equal number of even and odd permutations then for any $z_1,\dots,z_m\in\bR^{r+s}$,
  \begin{align*}
    E[k(W^{[m]})] &= 0
  \end{align*}
  where $W^i = z^i$ if $i\in S$ and $W^i = Z^i$ otherwise.
\end{lemma}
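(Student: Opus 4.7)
The plan is to use $X \indep Y$ to factor the expectation into a product of two sums, one involving $I_X$ and the other involving $I_Y$, and then show that each of these sums vanishes by the equal even/odd hypothesis on the equivalence classes.

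First I would write, using linearity and independence of $\{X^i\}$ from $\{Y^i\}$,
$$E[k(W^{[m]})] = \left(\sum_{\sigma \in H} \sign(\sigma)\, a_X(\sigma)\right)\left(\sum_{\tau \in H} \sign(\tau)\, a_Y(\tau)\right),$$
where $a_X(\sigma) := E[I_X((W_X)^{\sigma[m]})]$ with $(W_X)^i = x^i$ for $i \in S$ and $(W_X)^i = X^i$ for $i \notin S$, and $a_Y(\tau)$ defined analogously. It therefore suffices to show each factor is zero.

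Next I would show that $a_X$ is constant on each equivalence class $E_i$, using two symmetries. (i) Since $G \subset G_X$ is contained in the invariance group of $I_X$, for every $g \in G$ we have $I_X((W_X)^{g\sigma[m]}) = I_X((W_X)^{\sigma[m]})$, hence $a_X(g\sigma) = a_X(\sigma)$. (ii) Because $\{X^j : j \notin S\}$ are i.i.d.\ and hence exchangeable, the joint distribution of $(W_X)^{\sigma[m]}$ depends on $\sigma$ only through its restriction $\sigma|_S$ (which encodes which coordinate positions are fixed and which $x^j$ is placed at each such position); consequently $a_X(\sigma) = a_X(\sigma')$ whenever $\sigma|_S = \sigma'|_S$. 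Combining these: if $h \sim h'$ via $g \in G$ with $(gh)(i) = h'(i)$ for all $i \in S$, then $(gh)|_S = h'|_S$, so $a_X(h) = a_X(gh) = a_X(h')$. The identical argument for $I_Y$ uses $G \subset G_Y$ and yields the same constancy for $a_Y$ on each $E_i$.

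Finally, splitting the sum by equivalence classes and writing $c_i$ for the common value of $a_X$ on $E_i$,
$$\sum_{\sigma \in H} \sign(\sigma)\, a_X(\sigma) = \sum_{i=1}^t c_i \sum_{\sigma \in E_i} \sign(\sigma) = 0,$$
since the hypothesis guarantees $\sum_{\sigma \in E_i} \sign(\sigma) = 0$ for each $i$. The same holds for the $a_Y$ factor, so the product vanishes.

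The step I expect to require the most care is (ii): because of the convention $\sigma[m] = (\sigma^{-1}(1),\dots,\sigma^{-1}(m))$, one has to verify that the condition $gh(i) = h'(i)$ for $i \in S$ stated in the lemma (acting on $S$ on the domain side) is exactly what matches the exchangeability argument concerning the free versus fixed positions of $(W_X)^{h[m]}$, rather than a dual condition about $h(S)$ on the codomain side.
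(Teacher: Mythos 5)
Your proposal is correct and follows essentially the same route as the paper's proof: factor $E[k(W^{[m]})]$ into an $X$-factor and a $Y$-factor using $X\indep Y$, observe that $\sigma\mapsto E[I_X(W_X^{\sigma[m]})]$ is constant on each equivalence class $E_i$ (via $G$-invariance plus exchangeability of the i.i.d.\ coordinates outside $S$), and conclude from $\sum_{\sigma\in E_i}\sign(\sigma)=0$. You are in fact more explicit than the paper about why the expectation is class-constant, and your closing caution about the $\sigma[m]=(\sigma^{-1}(1),\dots,\sigma^{-1}(m))$ convention is exactly the right point to watch.
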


Lemma \ref{lem:kernel-flip} allows to identify conditions guaranteeing that $U_\mu$ is degenerate, that is, cases in which $\sqrt{n}(U_\mu - EU_\mu)$ converges to 0 in probability.

\begin{proposition} \label{prop:multivariate-degenerate}
 Suppose that the conditions of Lemma \ref{lem:kernel-flip} hold for $\mu$ whenever $S$ is a singleton set. If $X\indep Y$ then $\kappa_1 \equiv 0$ and thus $U_\mu$ is a degenerate U-statistic.
\end{proposition}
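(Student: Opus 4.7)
The plan is to show directly that $\kappa_1 \equiv 0$; once this is established, degeneracy of $U_\mu$ in the standard U-statistic sense follows from Hoeffding's decomposition, since under independence I-consistency (Proposition \ref{prop:general-src-results}) already gives $EU_\mu = \mu(X,Y) = 0$, so vanishing of the first-order projection is equivalent to $\sigma_1^2 := \var(\kappa_1(Z^1)) = 0$.

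First I would expand, using the symmetrised form in Equation \eqref{eq:sym-kernel},
\begin{align*}
\kappa_1(z^1) = E[\kappa(z^1, Z^2, \dots, Z^m)] = \frac{1}{m!}\sum_{\sigma \in S_m} E[k(W^{\sigma[m]})],
\end{align*}
where $W^1 = z^1$ and $W^i = Z^i$ for $i \geq 2$. By the paper's convention $w^{\sigma[m]} = (w^{\sigma^{-1}(1)}, \dots, w^{\sigma^{-1}(m)})$, position $i$ of the argument $W^{\sigma[m]}$ is $W^{\sigma^{-1}(i)}$, which equals $z^1$ precisely when $i = \sigma(1)$ and is an independent copy of $Z$ otherwise. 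Thus each term $E[k(W^{\sigma[m]})]$ fits exactly the setting of Lemma \ref{lem:kernel-flip} with singleton $S = \{\sigma(1)\}$ and with the fixed value at that single coordinate being $z^1$.

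By hypothesis, Lemma \ref{lem:kernel-flip} applies to every singleton $S \subset [m]$, so under $X \indep Y$ every summand vanishes, giving $\kappa_1(z^1) = 0$ for all $z^1 \in \bR^{r+s}$. Consequently $\sigma_1^2 = 0$, and the usual Hoeffding projection argument yields $\var(U_\mu) = O(n^{-2})$, so that $\sqrt{n}(U_\mu - EU_\mu)$ converges to $0$ in probability; this is the standard definition of a U-statistic being (first-order) degenerate.

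The only thing to watch carefully is the indexing in the symmetrisation, namely that it is $\sigma(1)$ rather than $\sigma^{-1}(1)$ that labels the position of the fixed argument $z^1$ inside $W^{\sigma[m]}$. Once this bookkeeping is pinned down, the proposition reduces immediately to the hypothesis via Lemma \ref{lem:kernel-flip}, and no further analytic work is required; the substantive content has already been absorbed into the lemma.
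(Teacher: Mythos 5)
Your proposal is correct and follows essentially the same route as the paper: expand $\kappa_1$ via the symmetrised kernel, observe that each permuted term places the fixed argument $z^1$ in the single position $\sigma(1)$, and apply Lemma \ref{lem:kernel-flip} with the singleton $S=\{\sigma(1)\}$ to kill every summand. Your index bookkeeping ($z^1$ sits at position $\sigma(1)$, not $\sigma^{-1}(1)$, under the paper's convention $w^{\sigma[m]}=(w^{\sigma^{-1}(1)},\dots,w^{\sigma^{-1}(m)})$) is accurate, and the added remarks on the Hoeffding projection are standard and consistent with the paper's appendix.
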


As an application of the above Lemma and Proposition we show, the known result, that $\tau^*,D,$ and $R$ are degenerate U-statistics under independence. Moreover, we show that their $\kappa_2$ functions take a simple form.

\begin{lemma} \label{lem:simple-sym-kernel}
  Let $I_X,I_Y$ be two rank indicators of order $m\geq 4$ and dimensions $r$ and $s$ respectively. Let $\mu=\mu_{I_X,I_Y,H_{\tau^*}}$ be a Symmetric Rank Covariance. Suppose that $X\indep Y$. If the invariance group of $\mu$ contains the subgroup $G=\langle (1\ 2), (3\ 4) \rangle$ then $\kappa_1(z^1) \equiv 0$ so that $U_{\mu}$ is a degenerate U-statistic and
  \begin{align*}
    \kappa_2(z^1,z^2) = \frac{4}{{m \choose 2}} E\Big[a_{I_X}(x^1,x^2,X^{3,\dots,m})\Big]\ E\Big[a_{I_Y}(y^1,y^2,Y^{3,\dots,m})\Big]
  \end{align*}
  where for any rank indicator $I$ of order $m\geq 4$ we define
  \begin{align*}
    a_{I}(w^{[m]}) = \sum_{\sigma\in H_{\tau^*}}\sign(\sigma)I(w^{\sigma[m]}).
  \end{align*}
  As we have shown previously, $\tau^*,D,$ and $R$ satisfy the above conditions. Moreover, by construction, so do all multivariate $\tau^*$ extensions.
\end{lemma}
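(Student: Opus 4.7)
My plan is to handle the two assertions separately.

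The first claim, $\kappa_1 \equiv 0$, follows from Proposition \ref{prop:multivariate-degenerate} once the orbit-balance hypothesis of Lemma \ref{lem:kernel-flip} is verified for every singleton $S = \{i\}$. This is a short enumeration: if $i \geq 5$ then every $h \in H_{\tau^*}$ fixes $i$, so all of $H_{\tau^*}$ collapses into one class with its two even and two odd permutations; if $i \in \{1,2,3,4\}$, the four images $\{h(i) : h \in H_{\tau^*}\}$ split into two $G$-orbits determined by the $\langle(1\ 2),(3\ 4)\rangle$-orbits $\{1,2\}$ and $\{3,4\}$, yielding two classes of size two, each containing exactly one even and one odd permutation.

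For the formula I expand
\[
\kappa_2(z^1, z^2) = E[\kappa(W^{[m]})] = \frac{1}{m!}\sum_{\gamma \in S_m} E[k(W^{\gamma[m]})],
\]
where $W = (z^1, z^2, Z^3, \dots, Z^m)$. For each $\gamma$, $W^{\gamma[m]}$ is deterministic at the positions $S_\gamma = \{\gamma(1),\gamma(2)\}$, so Lemma \ref{lem:kernel-flip} kills the $\gamma$-summand whenever its hypothesis holds for $S_\gamma$. A pair orbit analysis analogous to the singleton case shows that the hypothesis fails for exactly the four ``bad'' pairs $\{1,2\}, \{3,4\}, \{1,3\}, \{2,4\}$: any pair meeting $\{5,\dots,m\}$ yields balanced equivalence classes, and both $\{1,4\}$ and $\{2,3\}$ split $H_{\tau^*}$ into two balanced classes of size two, whereas each bad pair splits $H_{\tau^*}$ into four singleton classes with unbalanced parities $+,-,-,+$.

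Each bad pair accounts for two orderings of $(\gamma(1),\gamma(2))$ and $(m-2)!$ extensions to the remaining coordinates, so $8(m-2)!$ summands survive. Using $X \indep Y$, I factorise each as $E[a_{I_X}(V_X)]\,E[a_{I_Y}(V_Y)]$, and the goal is to show every factor equals $\pm A_X$ (resp.\ $\pm A_Y$) with the signs always cancelling in the product, so each summand contributes $+A_X A_Y$. The argument rests on three symmetries of $a_{I_X}$ under $G$-invariance of $I_X$: (i) the signed $H_{\tau^*}$-covariance $a_{I_X}(V^{\sigma[m]}) = \sign(\sigma)\, a_{I_X}(V^{[m]})$ for $\sigma \in H_{\tau^*}$ (immediate from the definition); (ii) the less obvious invariance $a_{I_X}(V^{(1\ 3)(2\ 4)[m]}) = a_{I_X}(V^{[m]})$, obtained by expanding $a_{I_X}$ over the coset $H_{\tau^*}\cdot(1\ 3)(2\ 4)$ and using $(1\ 2)(3\ 4) \in G$ to pull each term back to an element of $H_{\tau^*}$ with matching parity; and (iii) the expectation-level identity $E[a_{I_X}(V_0^{(1\ 2)[m]})] = E[a_{I_X}(V_0^{[m]})]$ with $V_0 = (x^1, x^2, X^3, \dots, X^m)$, proved by matching the two four-term expansions using $(1\ 2)$- and $(3\ 4)$-invariance together with exchangeability of $X^3, \dots, X^m$. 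Composing (i)--(iii) (together with the analogous $(3\ 4)$ identity, which is free by exchangeability) transports every $V_X$ into $V_0$ with a tracked sign; the $Y$-side mirrors this chain and the signs cancel. Totalling gives $\kappa_2(z^1,z^2) = \frac{8(m-2)!}{m!} A_X A_Y = \frac{4}{{m \choose 2}} A_X A_Y$, as required. The main obstacle is establishing symmetry (ii): although $(1\ 3)(2\ 4) \notin H_{\tau^*}$, left-multiplication by $(1\ 2)(3\ 4) \in G$ exactly re-threads the coset $H_{\tau^*} \cdot (1\ 3)(2\ 4)$ back into $H_{\tau^*}$ with the right sign assignments, which is the non-obvious coincidence that underlies the whole computation.
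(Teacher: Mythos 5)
Your proposal is correct and follows essentially the same route as the paper's proof: the singleton-orbit check feeding Lemma \ref{lem:kernel-flip} and Proposition \ref{prop:multivariate-degenerate} for $\kappa_1\equiv 0$, then the expansion of $\kappa_2$ over $S_m$, elimination of all but the $8(m-2)!$ terms with $\{\gamma(1),\gamma(2)\}\in\{\{1,2\},\{3,4\},\{1,3\},\{2,4\}\}$, and the count $\frac{8(m-2)!}{m!}=\frac{4}{\binom{m}{2}}$. The only difference is presentational: you dispatch the $\{1,4\}$ and $\{2,3\}$ terms via the orbit-balance criterion of Lemma \ref{lem:kernel-flip} and make explicit the coset identity $(1\ 2)(3\ 4)\,H_{\tau^*}\,(1\ 3)(2\ 4)=H_{\tau^*}$ underlying the equality of the surviving terms, where the paper leaves these as ``easy to verify'' from $G$-invariance.
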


As was noted by \cite{NandyEtAl16}, the U-statistics corresponding to
$\tau^*$ and $D$ have, up to a scale multiple, the same asymptotic
distribution under the null hypothesis that $X\indep Y$ and $(X,Y)$
are drawn from a continuous bivariate distribution. We give a simple
proof of this fact, as well as showing that $U_R$ has asymptotic
distribution also a scale multiple of the others, and clarify the
constant multiple by which they differ.

\begin{proposition} \label{prop:asymptotic-dists-for-bivariate}
  Let
  \begin{align*}
    Z = \sum_{i=1}^\infty\sum_{j=1}^\infty \frac{1}{i^2j^2}(\chi_{1,ij}^2-1)
  \end{align*}
  where $\{\chi^2_{1,ij} : i,j \in \bN_+ \}$ is a collection of independent and identically distributed $\chi^2_1$ random variables. Then
  \begin{align*}
    n\ U_{\tau^*} &\to \frac{36}{\pi^4} Z \qquad\text{ and both} \qquad
    n\ U_{D},\ n\ U_{R} \to \frac{1}{\pi^4} Z
  \end{align*}
  in distribution.
\end{proposition}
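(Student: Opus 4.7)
The plan is to apply the classical limit theorem for degenerate U-statistics. By the nonparametric invariance of Symmetric Rank Covariances (Proposition \ref{prop:general-src-results}), I may assume $X, Y \sim \Uniform[0,1]$. Combining Proposition \ref{prop:multivariate-degenerate} with Lemma \ref{lem:simple-sym-kernel}, via the SRC representations in Proposition \ref{prop:standard-measures-are-srcs}, I obtain $\kappa_1 \equiv 0$ and the product factorization
\[
\kappa_2(z^1, z^2) = c_\mu \, g(x^1, x^2) \, g(y^1, y^2), \qquad c_\mu = \frac{4}{\binom{m}{2}},
\]
with $g$ a symmetric kernel on $[0,1]^2$ and $m$ the order of the kernel (the hypothesis that the invariance group contains $\langle (1\,2),(3\,4)\rangle$ is immediate in each case). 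Since $X \indep Y$ under the null, the integral operator on $L^2([0,1]^2)$ with kernel $\kappa_2$ factorizes as $c_\mu (T_g \otimes T_g)$ with nonzero eigenvalues $c_\mu \alpha_i \alpha_j$, where $\{\alpha_k\}$ is the nonzero spectrum of $T_g f(u) = \int_0^1 g(u, v) f(v)\, dv$. The degenerate U-statistic limit theorem then yields, using $EU = 0$ at the null,
\[
n\, U \;\longrightarrow\; \binom{m}{2} c_\mu \sum_{i, j \geq 1} \alpha_i \alpha_j \, (Z_{ij}^2 - 1) \qquad \text{in distribution},
\]
with $\{Z_{ij}\}$ iid $N(0, 1)$.

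I would next compute $g$ explicitly by analyzing the four terms of $a_I(u, v, U^3, U^4, \ldots)$ with $U^j \sim \Uniform[0,1]$ iid. A direct case analysis yields
\[
g_{\tau^*}(u, v) = 1 - 3\max(u, v) + \tfrac{3}{2}(u^2 + v^2), \qquad g_D(u, v) = g_R(u, v) = \tfrac{1}{3} g_{\tau^*}(u, v).
\]
Moreover, $I_{D,1} = I_{R,1}$ as rank indicators in the bivariate case, so in fact $U_D = U_R$ as random variables, already reducing the $U_R$ claim to that for $U_D$. Since $g_D$ and $g_{\tau^*}$ are scalar multiples, the operators $T_{g_D}$ and $T_{g_{\tau^*}}$ share eigenfunctions with proportional eigenvalues, so it suffices to diagonalize $T_{g_D}$.

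Exploiting the symmetry $g_D(1 - u, 1 - v) = g_D(u, v)$, I test the candidate eigenfunctions $\phi_k(v) = \cos(k\pi v) - 1$ for $k \geq 1$. Using only $\int_0^1 \cos(k\pi v)\, dv = 0$, $\int_0^1 \max(u, v)\cos(k\pi v)\, dv = ((-1)^k - \cos(k\pi u))/(k\pi)^2$, $\int_0^1 v^2 \cos(k\pi v)\, dv = 2((-1)^k - 1)/(k\pi)^2$, and $T_{g_D}(1) = 0$ (which holds because $g_D$ has zero marginal), a direct computation gives $T_{g_D} \phi_k = (k\pi)^{-2} \phi_k$, so $\alpha_k^D = 1/(k\pi)^2$ and $\alpha_k^{\tau^*} = 3/(k\pi)^2$. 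That this list exhausts the nonzero spectrum is verified by matching the first two trace moments of $T_{g_D}$ with the candidate series: $\int_0^1 g_D(u, u)\, du = 1/6 = \sum_k 1/(k\pi)^2$ and $\iint g_D(u,v)^2\, du\, dv = 1/90 = \sum_k 1/(k\pi)^4$, using $\sum k^{-2} = \pi^2/6$ and $\sum k^{-4} = \pi^4/90$.

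Assembling the constants then yields the claimed limits. For $\tau^*$ with $m = 4$, $c_\mu = 2/3$, the limit is $6 \cdot (2/3) \sum_{i,j} (3/(i\pi)^2)(3/(j\pi)^2)(Z_{ij}^2 - 1) = (36/\pi^4) Z$. For $D$ (and hence $R$) with $m = 5$, $c_\mu = 2/5$, and the additional factor $1/4$ from $D = \tfrac{1}{4} \mu_{I_{D,1}, I_{D,1}, H_{\tau^*}}$, the limit is $10 \cdot (1/4)(2/5) \sum_{i,j} (1/(i\pi)^2)(1/(j\pi)^2)(Z_{ij}^2 - 1) = (1/\pi^4) Z$. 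The main obstacle is the eigenvalue step: while the ansatz $\cos(k\pi v) - 1$ is natural in view of the reflection symmetry, the candidate eigenfunctions are neither mutually orthogonal nor orthogonal to constants, so completeness of the spectrum must be established indirectly through the trace identities rather than by an orthonormal expansion.
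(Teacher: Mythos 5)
Your architecture matches the paper's: degeneracy and the product form of $\kappa_2$ come from Lemma \ref{lem:simple-sym-kernel}, the limit from Theorem \ref{theorem: degenerate asymptotics} with a tensor-product spectrum. The genuine difference is that the paper simply cites \citet{NandyEtAl16} for the eigenvalues $6/(\pi^4 i^2 j^2)$ of $\kappa^{\tau^*}_2$ and reduces $D$ and $R$ to $\tau^*$ via $E[a_{I_D}(x^{1,2},X^{3,4,5})]=\tfrac13 E[a_{I_{\tau^*}}(x^{1,2},X^{3,4})]$, whereas you diagonalize the one-dimensional operator yourself; that makes the proof self-contained, and your $g_{\tau^*}$, the relation $g_D=\tfrac13 g_{\tau^*}$, the eigenvalues $1/(k\pi)^2$, and the constants for $\tau^*$ and $D$ are all correct. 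But two steps are wrong as written. The eigenfunction verification fails: $\int_0^1 v^2\cos(k\pi v)\,dv = 2(-1)^k/(k\pi)^2$, not $2((-1)^k-1)/(k\pi)^2$ (your value exceeds $\int_0^1 v^2\,dv=1/3$ in absolute value for odd $k$, so it cannot be right). With the correct value, $T_{g_D}\bigl(\cos(k\pi\cdot)-1\bigr)(u) = (k\pi)^{-2}\cos(k\pi u)$, so $\cos(k\pi v)-1$ is \emph{not} an eigenfunction; rather $T_{g_D}\bigl(\cos(k\pi\cdot)\bigr) = (k\pi)^{-2}\cos(k\pi\cdot)$ and $T_{g_D}(1)=0$. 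This is actually good news: since $\{1,\sqrt2\cos(k\pi v)\}_{k\ge1}$ is an orthonormal basis of $L^2[0,1]$, the spectrum $\{(k\pi)^{-2}\}$ is exhausted outright and your trace-moment workaround for completeness is unnecessary. The eigenvalues, and hence your conclusions for $\tau^*$ and $D$, survive unchanged.

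The second error is the claim that $U_D=U_R$. If that held identically then $D=R$ as functionals, contradicting Example \ref{exmp:hoeffding-failure}, where $D=0$ but $R>0$. The point is that the kernel for $R$ in the bivariate case has order $m=6$, not $5$: the cutpoints for the $X$-part and the $Y$-part must be taken from \emph{distinct} observations (index $5$ for $x$, index $6$ for $y$), which is exactly what realizes integration against $\text{d}F_{X}\,\text{d}F_{Y}$ rather than $\text{d}F_{XY}$ (compare Proposition \ref{prop:isms-are-srcs} with $t=r+s$, and the appendix where $\kappa^R$ has order $d+4$); the display for $I_{R,d}$ in Proposition \ref{prop:standard-measures-are-srcs} is misleading on this point if read with a shared index. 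The conclusion is still correct once repaired: with $m=6$ one has $c_\mu=4/\binom{6}{2}=4/15$, the extra coordinate integrates out so $g_R=\tfrac13 g_{\tau^*}$ again, and
\begin{align*}
n\,U_R \to \binom{6}{2}\cdot\frac14\cdot\frac{4}{15}\sum_{i,j\ge1}\frac{1}{(i\pi)^2}\frac{1}{(j\pi)^2}(\chi^2_{1,ij}-1) = \frac{1}{\pi^4}Z,
\end{align*}
consistent with the paper's $\kappa^R_2=\tfrac{1}{90}\kappa^{\tau^*}_2$ versus $\kappa^D_2=\tfrac{1}{60}\kappa^{\tau^*}_2$, the gap being compensated by $\binom{6}{2}$ versus $\binom{5}{2}$.
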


To better understand at which sample size $n$ the finite sample
distributions of $U_{\tau^*}, U_{D}$, and $U_R$ become well
approximated by their asymptotic distributions we plot the total
variation distance between kernel density estimates of the finite
sample distributions of $U_{\tau^*}, U_{D}$, and $U_R$ for
$n\in\{15,30,60,120,240\}$ against the probability density functions
of their asymptotic distributions in Figure
\ref{fig:2-dim-asymp-dists}.  We observe good agreement even for
$n=60$. Unfortunately clarifying the exact asymptotic behavior in
higher dimensions appears to be significantly more difficult than in
the bivariate case. In part this is due to the fact that, unlike 
in the continuous bivariate case, the distributions of the random
vectors $X$ and $Y$ influence the asymptotic properties of our multivariate
U-statistics.
Indeed, even when $r=1$ and $s=2$ and $X$, $Y$ are normally distributed,
Figure \ref{fig:tau-star-j-asymp-dists} suggests that the correlation 
between $Y_1$ and $Y_2$ has a substantial impact on large sample
behavior. Because of these difficulties, we leave this problem for future work.

\begin{figure}[t]
  \centering
  \includegraphics[width=0.5\textwidth]{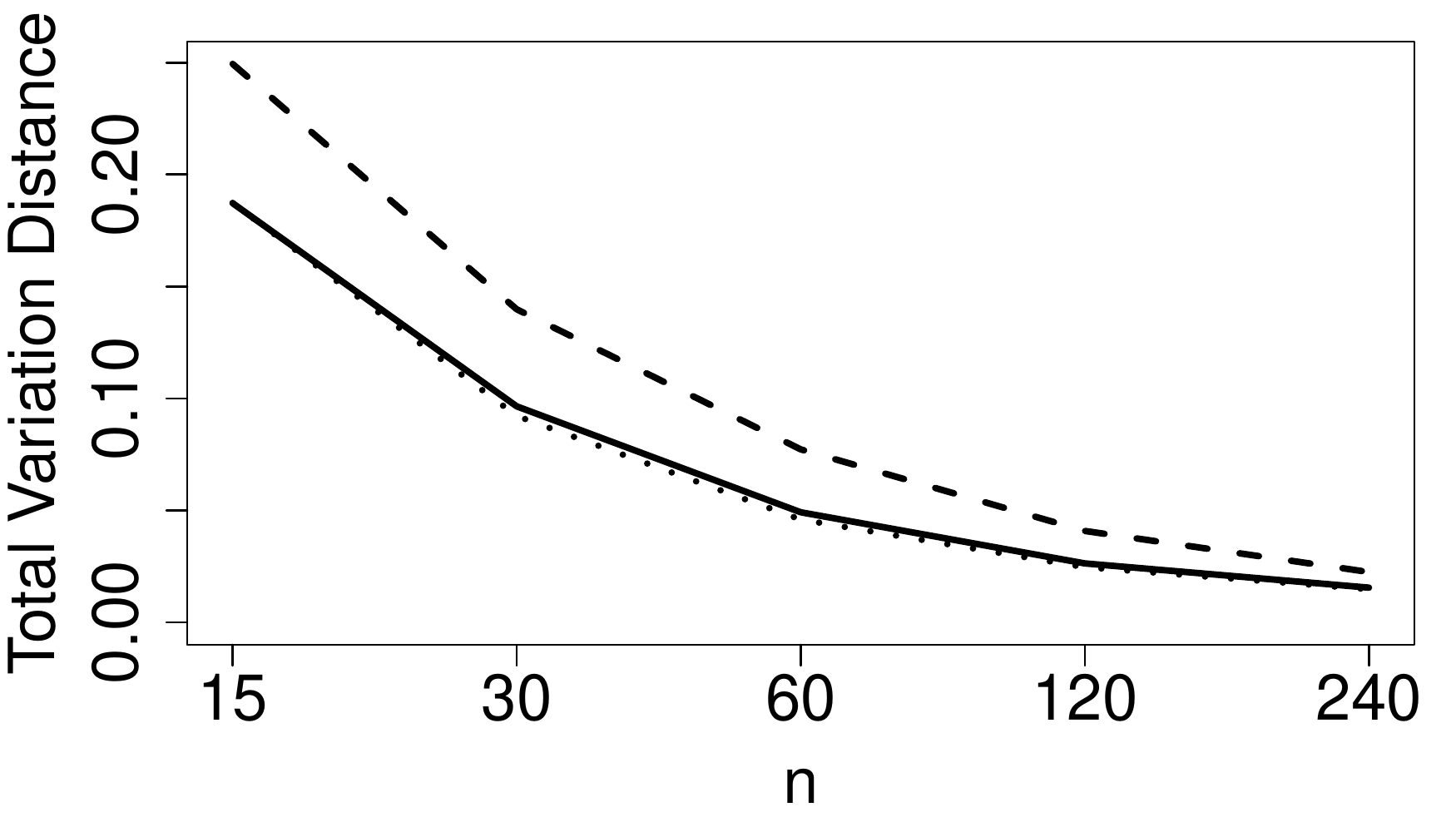}
  
  \caption{The total variation distance from kernel density estimators of the finite sample distributions of $U_{\tau^*}$ (solid line), $U_{D}$ (dashed line), and $U_R$ (dotted line) to the probability density functions of their asymptotic distributions. The x-axis is plotted on a log-scale. Here $n\in\{15,30,60,120,240\}$ is the sample size. The finite sample distributions are quite close to the asymptotic distributions even when $n$ is only $\approx 60$.}
  \label{fig:2-dim-asymp-dists}
\end{figure}

\begin{figure}[t]
  \centering
  \includegraphics[width=0.5\textwidth]{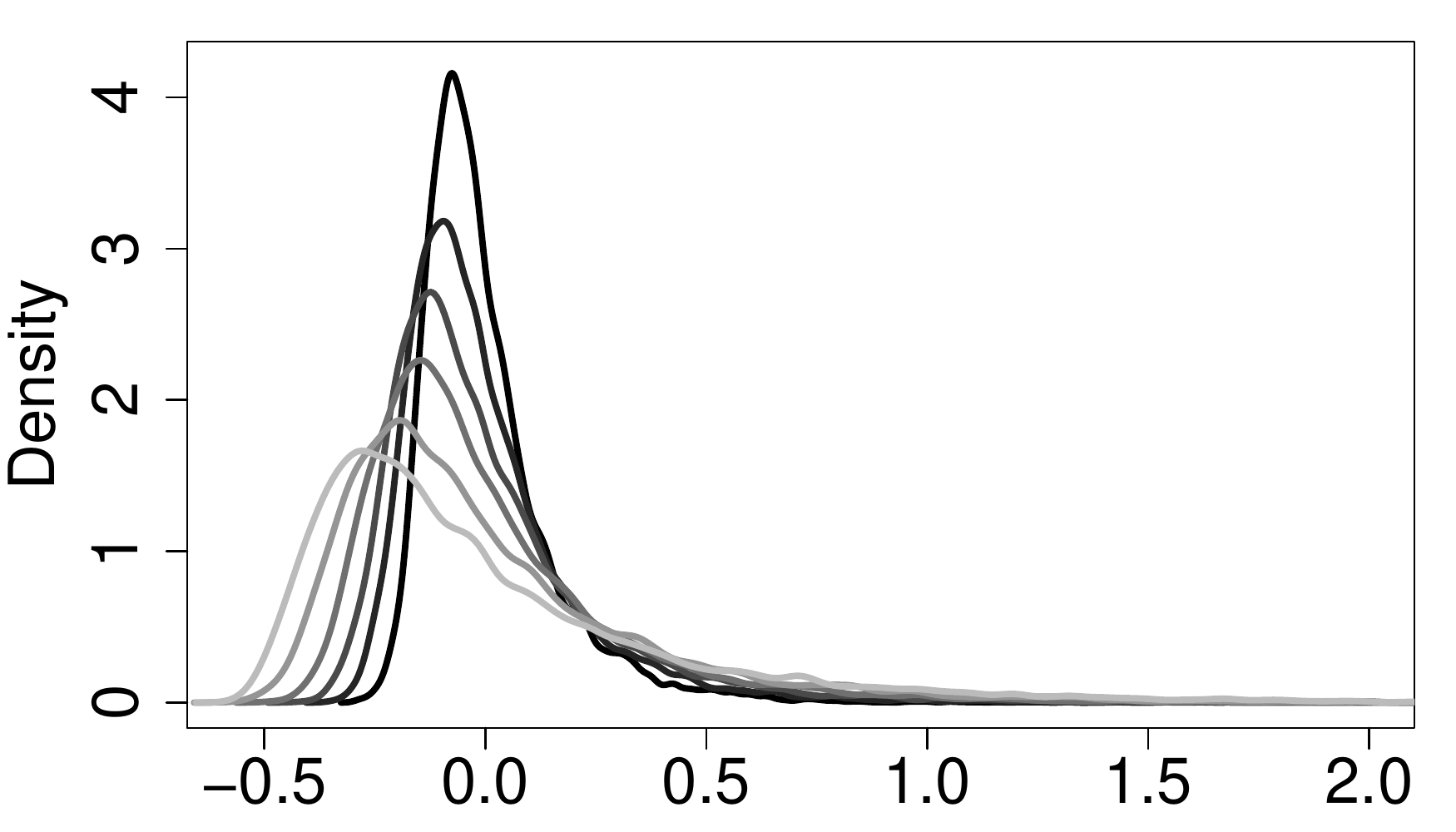}
  
  \caption{Kernel density estimates of the finite sample distributions of $n\ U_{\tau^*_J}$ for samples of size $n=70$ taken from $(X,Y)$ where $X,Y_1,Y_2\sim N(0,1)$, $(Y_1,Y_2)$ are jointly normal with correlation $\rho$, and $X\indep Y$. Here $\rho$ varies in $\{0, 1/5, \dots, 1\}$ with the lighter colored lines corresponding to kernel density estimates for smaller $\rho$. The large impact of $\rho$ on the finite sample distributions suggests that these differences carry over into the respective asymptotic distributions.}
  \label{fig:tau-star-j-asymp-dists}
\end{figure}


\section{Simulations}\label{sec:simulations}

\subsection{Power} \label{sec:power}

All of the following experiments are run in R \citep{R} using the package \texttt{SymRC} which can be obtained from \url{https://github.com/Lucaweihs/SymRC}. \texttt{SymRC} was created with efficiency in mind, all of the ``heavy lifting'' is done using C++ \citep{C++}. 

We consider the problem of testing if some univariate response $Y$ is independent of a set of covariates $X=(X_1,\dots,X_r)$. Our tests will be based on the U-statistics corresponding to $D,R, \tau^*_P$, and $\tau^*_J$. As explicit asymptotic distributions for $U_D,U_R, U_{\tau^*_P},$ and $U_{\tau^*_J}$ are not known we will use permutation tests. Unfortunately the computational complexity of $U_R, U_{\tau^*_P},$ and $U_{\tau^*_J}$ are such that, while it is certainly possible to perform permutation tests for a single moderately sized sample, it becomes computationally prohibitive to perform the many thousand such tests needed for Monte Carlo approximation of power.  We thus approximate the results of permutation tests: first we create a reference distribution for our U-statistic of interest under $X\indep Y$ by, for $R=1000$ iterations, randomly generating $x^1,\dots,x^n$ independently from the marginal distribution of $X$ and $y^1,\dots.,y^n$ independently from the marginal distribution of $Y$ and saving the value of the U-statistic for this data set. For an independent and identically distributed sample $\cD = \{(\ol{x}^1,\ol{y}^1),\dots,(\ol{x}^n,\ol{y}^n)\}$ from the true joint distribution of $(X,Y)$ we then compute a p-value as the proportion of observations in the reference distribution that are greater or equal to the value of the U-statistic when computed on $\cD$. 

This procedure differs from a standard permutation test only in that the reference distribution, and hence critical value for rejection, differ slightly. Empirical tests using small sample sizes suggest, however, that results using the above procedure generalise well to those when using a true permutation test. Computing $U_D$ is sufficiently fast that we do not need to use the above procedure and instead use a standard permutation test.

For comparison, we also compute the power of the permutation test based on the distance covariance $d_{cov}$ as computed by the \texttt{energy} package in R \citep{RizzoSzekely16}. In each simulations, we estimate power using 1000 sample data sets from the relevant joint distribution.

We consider two cases in which we generate samples of size 50 from jointly continuous distributions. First, we let $r = 2$, $X_1,X_2$ be independent samples from a $N(0,1)$ distribution, and $Y = X_1 X_2 + \epsilon$ where $\epsilon \sim N(0,\sigma^2)$, with $\sigma \in \{0, \dots, 5\}$.  Figure \ref{fig:cont-power-prod} depicts the power of the tests of the hypothesis that  $X\indep Y$. For comparison, we have also included the power of the distance covariance when $Y$ has been monotonically transformed by the function $f(y) = \sign(y) \log(|y| + 10)$, this transformation substantially reduces the power of the distance covariance while it would have no impact on the power of the other tests as they are nonparametric. In the second case we let $X_1,X_2$ and $\epsilon$ be as above but define $Y = \exp(-(X_1-X_2)^2) + \epsilon$. Figure \ref{fig:cont-power-exp} displays the power of the tests as we let $\sigma$ vary in $\{0,\dots,2\}$. The power of the test based on $\tau^*_J$ is always near the nominal 0.05 level, this suggests that $\tau^*_J(X,Y)=0$ and thus that $\tau^*_J$ is not D-consistent in this case.

We also consider two cases in which $(X,Y)$ is generated from a jointly discrete distribution. Unlike in the continuous case, the sample size $n$ will vary with $n\in\{16, 20, \dots., 48\}$. Firstly, we let $r = 2$, $X_1,X_2$ be independent samples from a Bernoulli($1/2$) distribution, and $Y = \text{XOR}(X_1, X_2)$. We compute the power of our tests for various sample sizes and plot the results in Figure \ref{fig:dis-power-xor3}. As we would expect from Example \ref{exmp:tau-j-inconsistent}, we see that the power of the test based on $\tau^*_J$ equals 0 at all sample sizes. Secondly, we let $r=3$, $X_1,X_2,X_3$ be independent samples from a Bernoulli($1/2$) distribution, and define $Y = \text{XOR}(X_1, X_2, X_3)$. Figure \ref{fig:dis-power-xor4} displays the power of the tests in this setting. Unlike in the prior case, the power of the test based on $\tau^*_J$ is quite high, again recall from Example \ref{exmp:tau-j-inconsistent} that this is because $r$ is odd. 

We conclude with a mixed case, where the covariates $X_1,X_2$ are continuous but the response, $Y$, is binary. In particular, we let $X_1,X_2$ be independent $N(0,1)$ while $Y\sim \text{Bernoulli}(\text{expit}(6 \ \sin(X_1 X_2))$. Our empirical power computations are displayed in Figure \ref{fig:mixed-power}.

As one may expect, we do not see any one particular independence test dominating the others in our simulations. The fact that the nonparametric tests often perform nearly as well, or better, than the distance covariance is somewhat surprising however as they are invariant to such a wide range of transformations. While it is certainly not a proof, the fact that the tests based on $\tau^*_P$ have power beyond the nominal level in all cases suggests that, unlike $\tau^*_J$, perhaps $\tau^*_P$ is indeed D-consistent.

\begin{figure}
  \centering
  \begin{subfigure}[t]{0.49\textwidth}
    \centering
    \includegraphics[width=\textwidth]{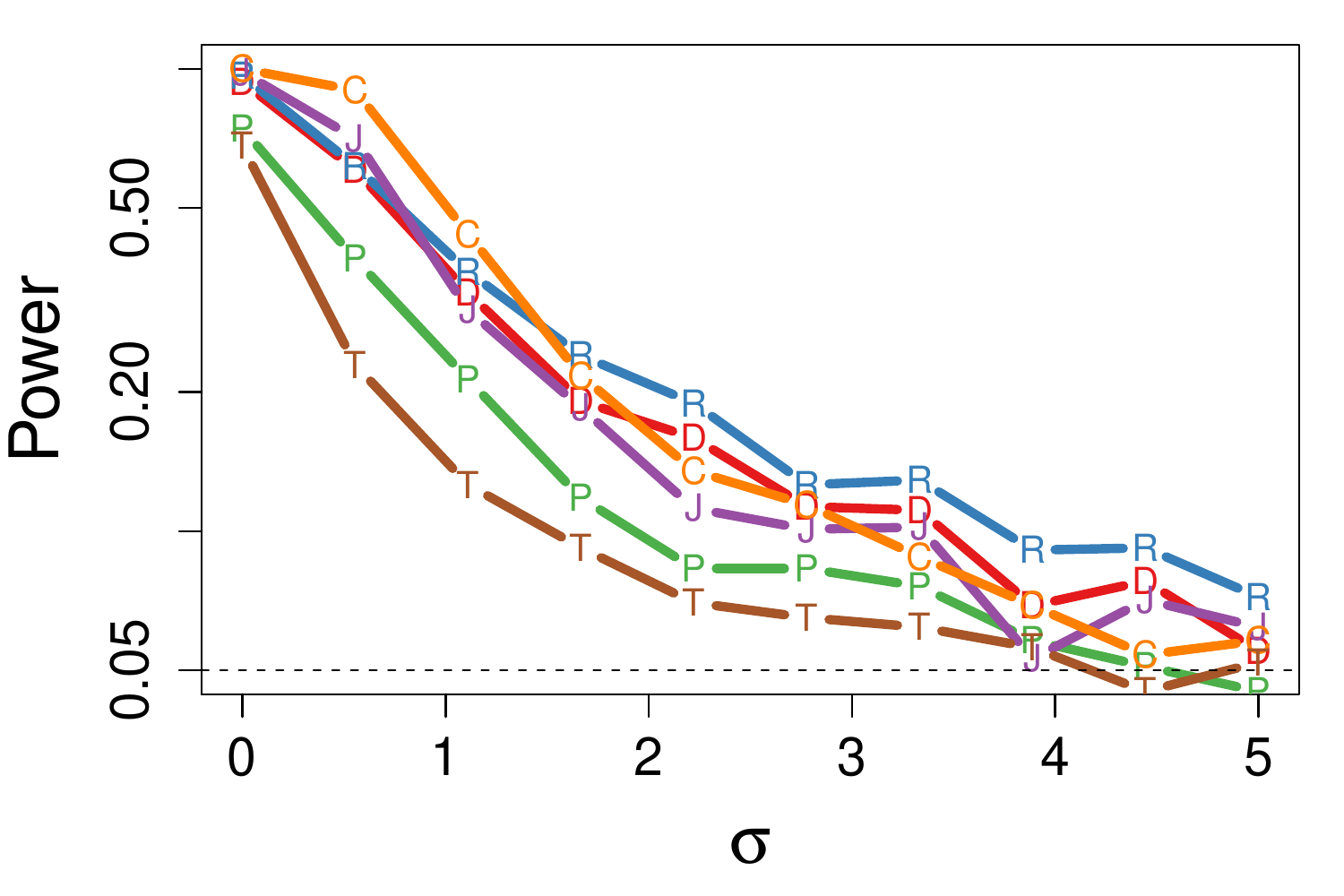}
    \caption{$Y = X_1 X_2 + \epsilon$}\label{fig:cont-power-prod}
  \end{subfigure}
  \begin{subfigure}[t]{0.49\textwidth}
    \centering
    \includegraphics[width=\textwidth]{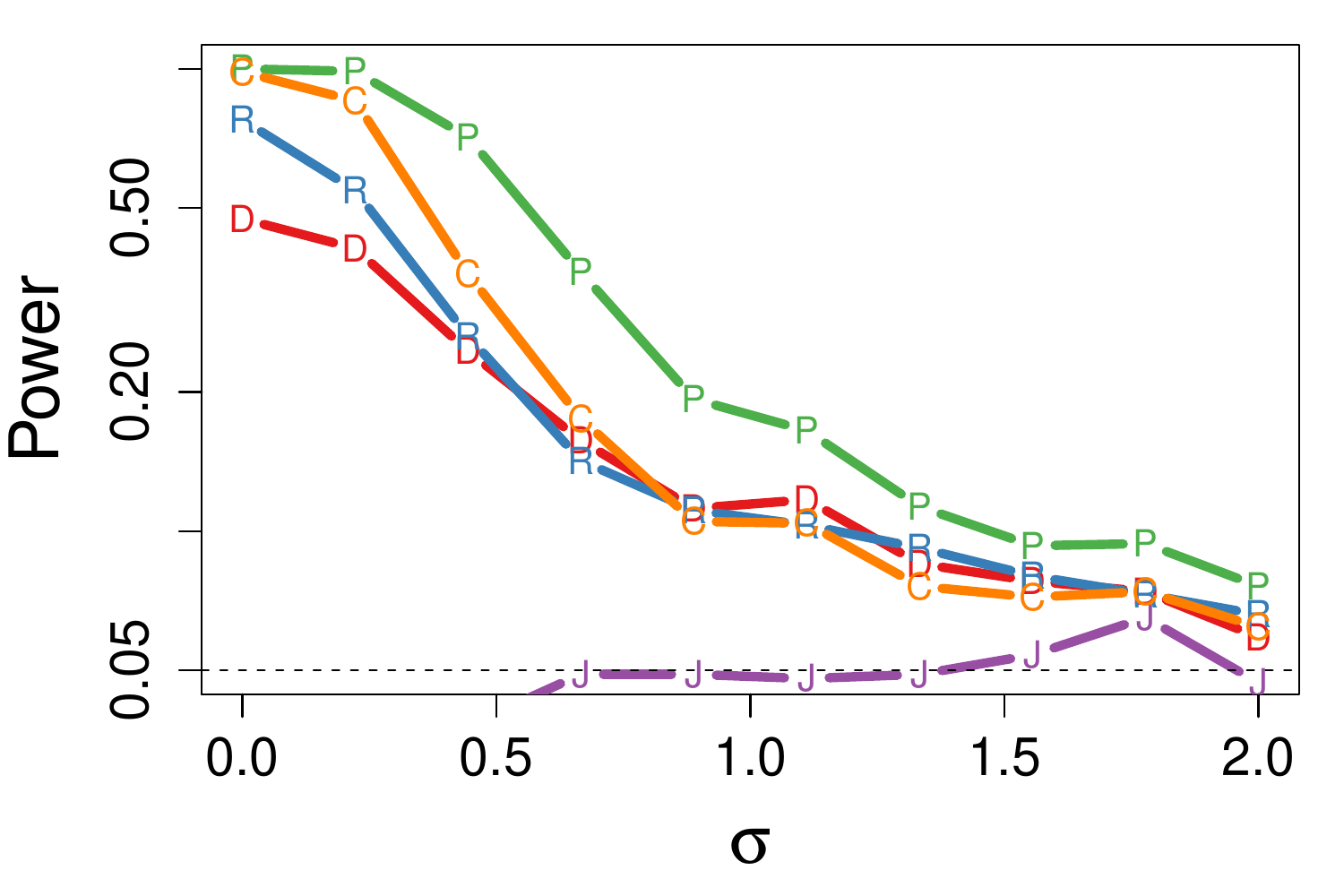}
    \caption{$Y = \exp(-(X_1- X_2)^2) + \epsilon$}\label{fig:cont-power-exp}
  \end{subfigure}

  \caption{Empirical power estimates of permutation tests using $U_D$ (red line with symbol D), $U_{R}$ (blue, R), $U_{\tau^*_P}$ (green, P), $U_{\tau^*_J}$ (purple, J), $d_{cov}$ (orange, C), in the continuous case. The dotted line shows the nominal 0.05 level. For the lines to be visually distinguishable the y-axis is plotted on a log-scale. Here $\sigma$ is the standard deviation of the additive noise $\epsilon$. For Figure \ref{fig:cont-power-prod}, the brown line, with symbol T, corresponds to $d_{cov}$ after applying the strictly increasing transformation $y\mapsto \sign(y)\log(|y|+10)$ to $Y$,  the power of the permutation test based on $d_{cov}$ is substantially harmed by this transformation while the other tests, by monotonic invariance, would not be affected.}
  \label{fig:cont-power} 
\end{figure}

\begin{figure}
  \centering
  \begin{subfigure}[t]{0.49\textwidth}
    \centering
    \includegraphics[width=\textwidth]{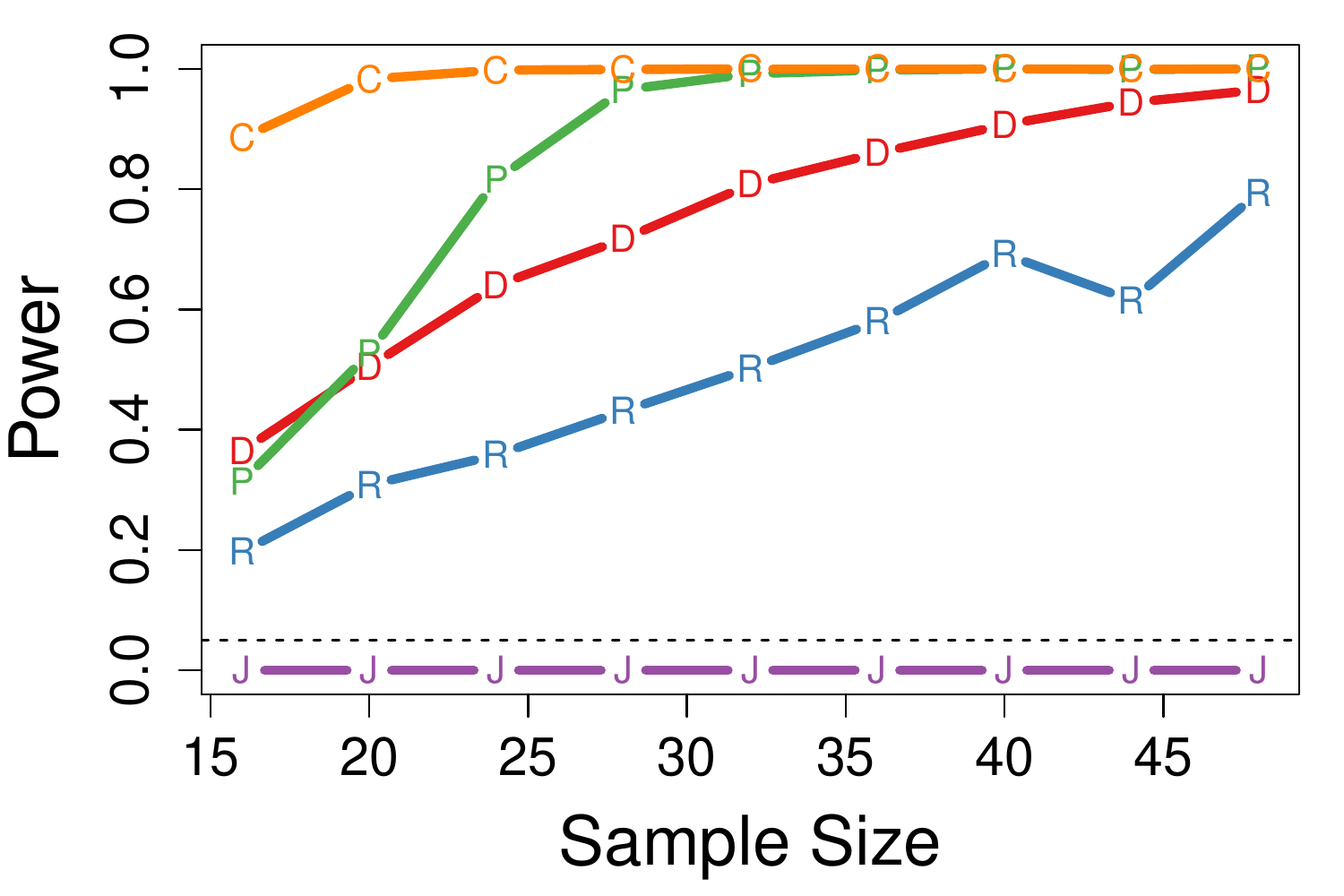}
    \caption{$Y = \text{XOR}(X_1, X_2)$}  \label{fig:dis-power-xor3}
  \end{subfigure}
  \begin{subfigure}[t]{0.49\textwidth} 
    \centering
    \includegraphics[width=\textwidth]{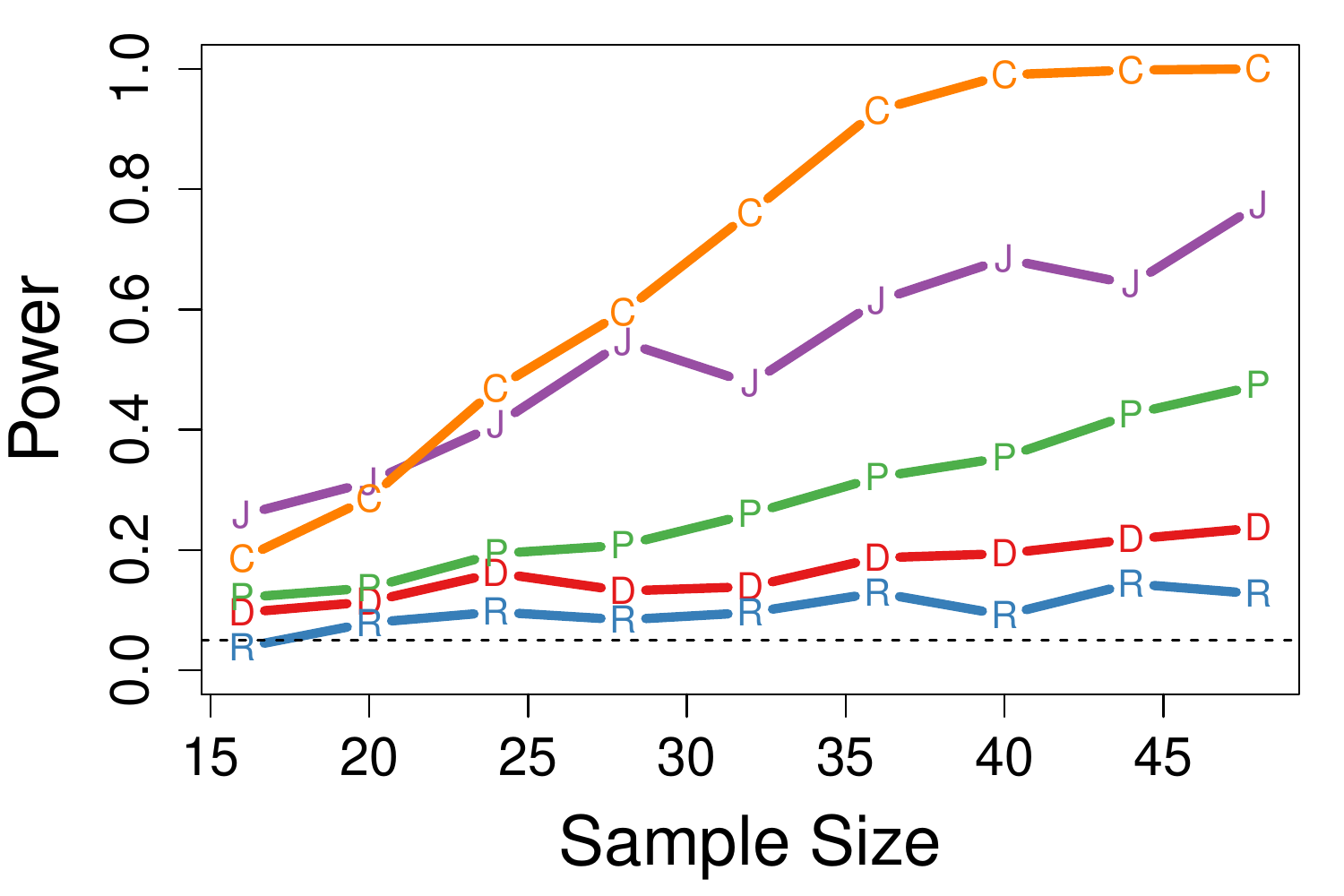}
    \caption{$Y = \text{XOR}(X_1, X_2, X_3)$} \label{fig:dis-power-xor4}
  \end{subfigure}
  
  \caption{Empirical power estimates of permutation tests of independence for a jointly discrete distribution when varying $n\in\{16, 20, 24,\dots, 48\}$. See Figure \ref{fig:cont-power} for the correspondence between line color and test.}
  \label{fig:dis-power} 
\end{figure}

\begin{figure}
        \centering
        \includegraphics[width=.5\textwidth]{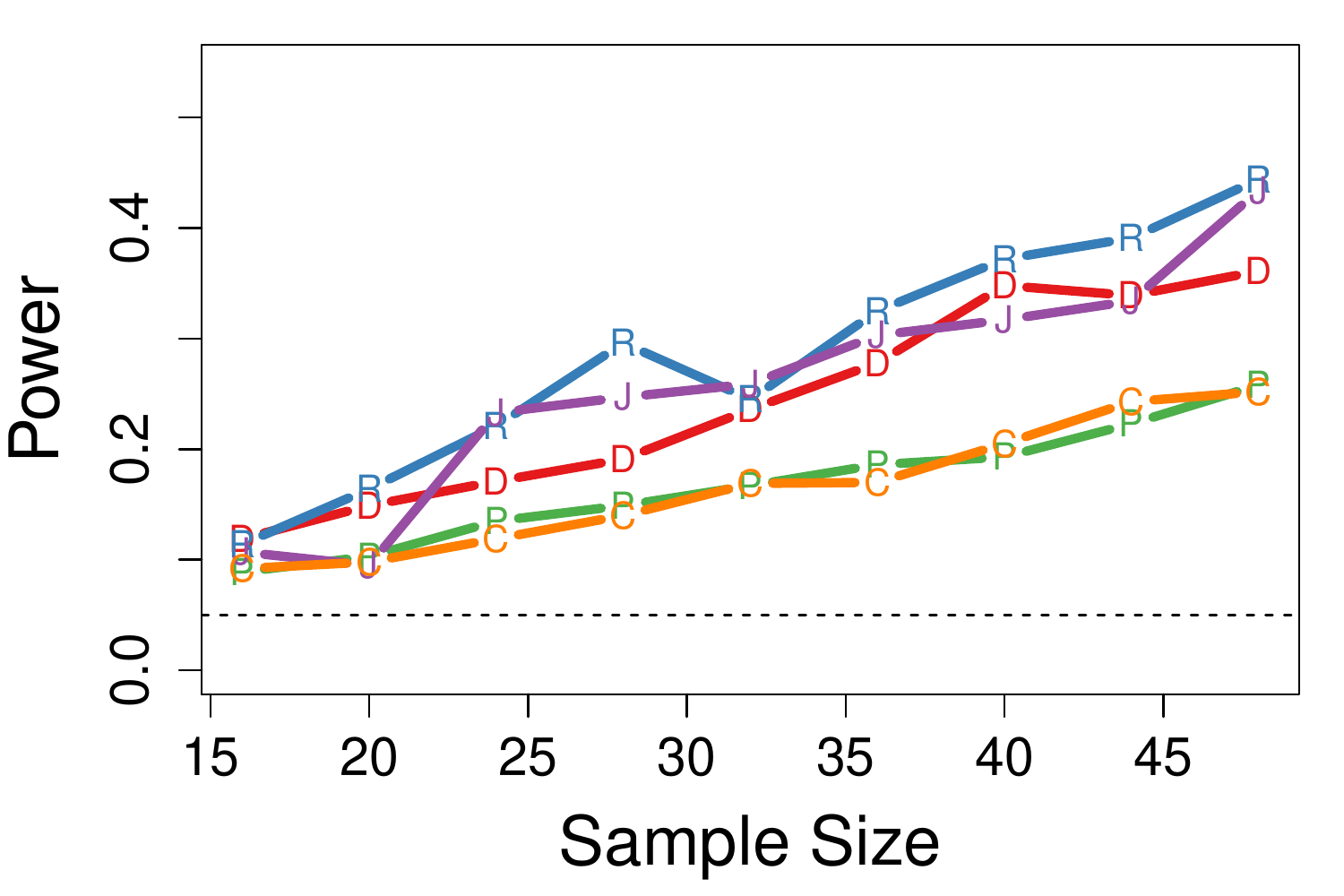}
        \caption{Empirical power estimates of permutation tests of independence when varying $n\in\{16, 20, 24,\dots, 48\}$. Here $Y\sim \text{Bernoulli}(\text{expit}(6 \ \sin(X_1 X_2))$ so the joint distribution $(X,Y)$ is neither jointly continuous or jointly discrete. See Figure \ref{fig:cont-power} for the correspondence between line color and test.}
        \label{fig:mixed-power} 
\end{figure}

\subsection{Computational Efficiency}

While the use of orthogonal range query data structures reduces the asymptotic complexity of computing our U-statistics of interest, such results give little guidance on which algorithms should be used for realistic sample sizes. With practical use in mind, we empirically compare the computational complexity of computing $U_D,U_R,U_{\tau^*_J},$ and $U_{\tau^*_P}$. For these simulations we will generate data from two different distributions; for the first, we let $(X,Y)\sim N_2(0,I_2)$ while, for the second, we let $(X,Y) = (X^1,X^2,Y) \sim N_3(0,I_3)$. We consider the following experiments.

We compute $U_D$ using Equation \eqref{eq:D-u-stat-for-computation} where counts are either computed with a range-tree or looping through the data set. The asymptotic run-time of the range-tree method is $O(n\log_2(n)^{d-1})$ with the more na\"ive method taking $O(n^2)$ time. Both of the above methods are substantially faster than the truly na\"ive $O(n^5)$ strategy of directly computing the sum in Equation \eqref{eq:u-statistic}.

We compute $U_R$ using Equation \eqref{eq:R-u-stat-for-computation} where counts are either computed with an orthogonal range tensor or by looping through the data set. The asymptotic run-time of the orthogonal range tensor method is $O(n^{d})$ while the na\"ive method takes $O(n^{d+1})$ time. As above, both of these methods are much faster than the truly na\"ive $O(n^{d+4})$ strategy.

We compute both $U_{\tau^*_J}$ and $U_{\tau^*_P}$ using our range-tree methods and by definition. The range-tree methods require $O(n^2\log_2(n)^{2d-1})$ time while the na\"ive methods take $O(n^4)$ time.

The results of the above computations are available in Figure \ref{fig:u-time}. From the asymptotic analysis, one would expect that the benefits of using our efficient range query data structures would diminish in higher dimensions and, indeed, that is exactly what the figures show. Comparing Figures \ref{fig:u-d-2-time} and \ref{fig:u-d-3-time}, for instance, we see that when $(X,Y)\sim N_{2}(0,I_2)$ the na\"ive algorithm performs worse than the other for almost all sample sizes but, when moving up to dimension 3 with $(X,Y)\sim N_{3}(0,I_3)$, it is only for sample sizes greater than $\sim$3000 that the range-tree method out-performs the na\"ive strategy.

As Figure \ref{fig:u-p-3-time} shows, computing $U_{\tau^*_P}$ using range-trees is, for reasonable sample sizes, substantially slower than computing $U_{\tau^*_P}$ by definition. This is not surprising considering the many large constant factors that are hidden in the asymptotic analysis.

\begin{figure}
  \centering
  \begin{subfigure}[t]{0.49\textwidth}
    \centering 
    \includegraphics[width=\textwidth]{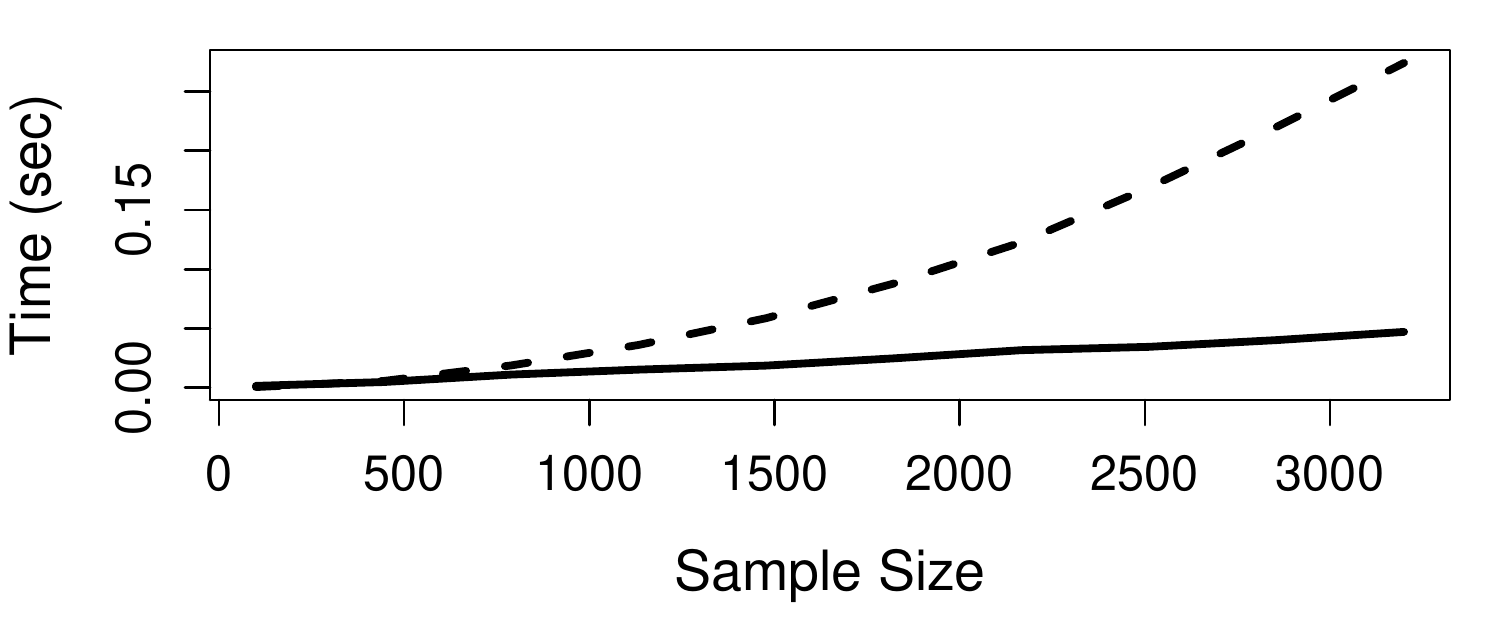}
    \caption{$U_{D},\ (X,Y)\sim N_2(0, I_2)$} \label{fig:u-d-2-time}
  \end{subfigure}
  \begin{subfigure}[t]{0.49\textwidth}
    \centering
    \includegraphics[width=\textwidth]{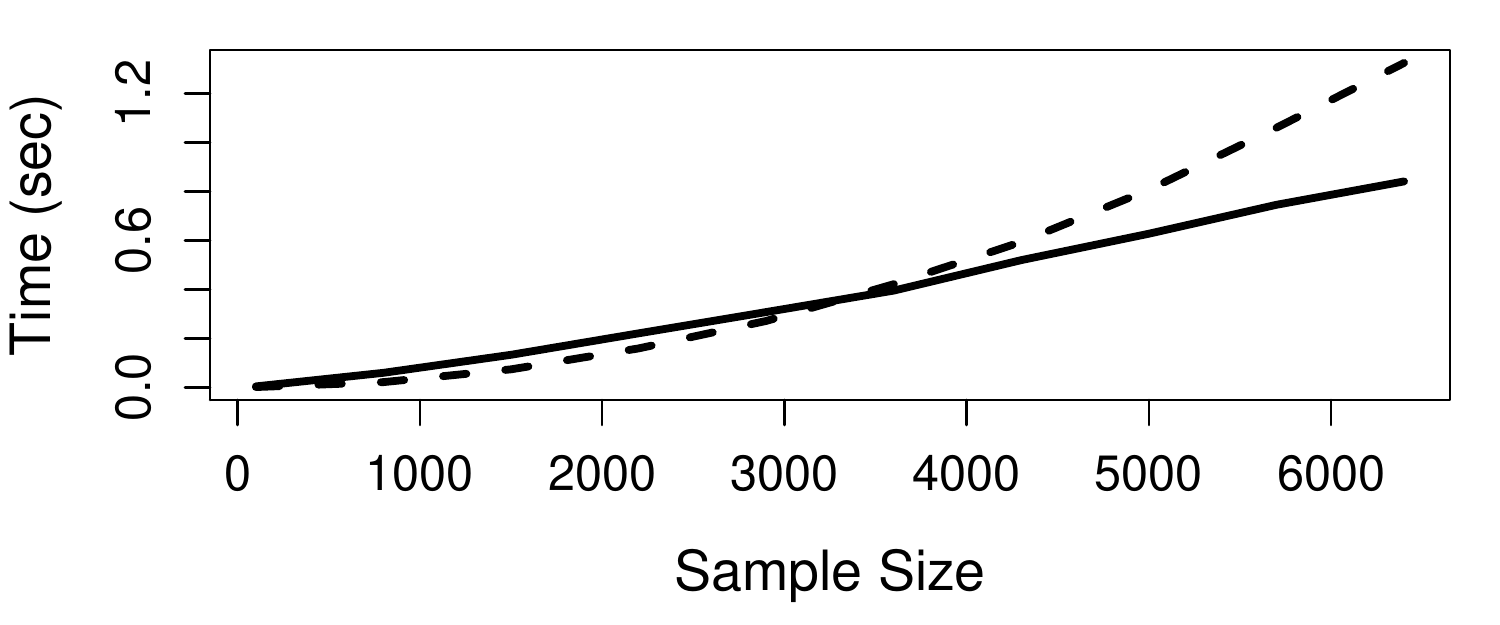}
    \caption{$U_{D},\ (X,Y)\sim N_3(0, I_3)$} \label{fig:u-d-3-time}
  \end{subfigure}
  
  \begin{subfigure}[t]{0.49\textwidth}
    \centering
    \includegraphics[width=\textwidth]{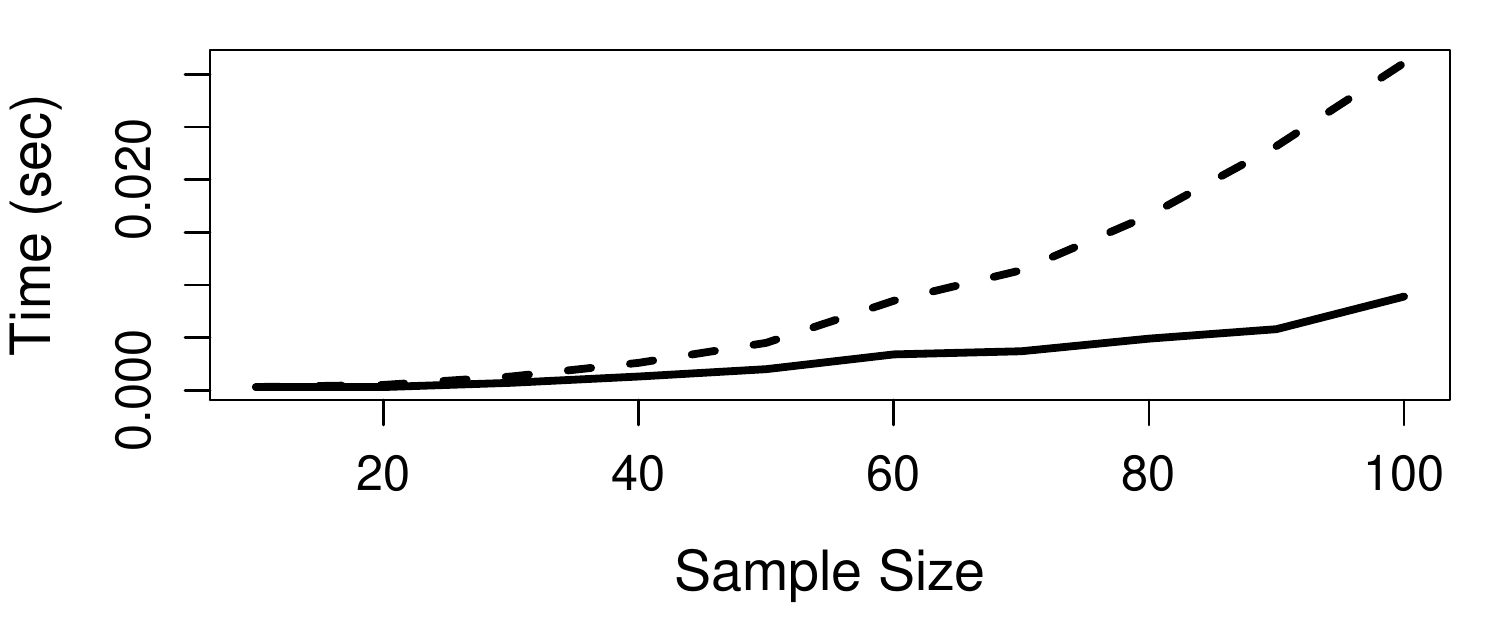}
    \caption{$U_{R},\ (X,Y)\sim N_2(0, I_2)$} \label{fig:u-r-2-time}
  \end{subfigure}
  \begin{subfigure}[t]{0.49\textwidth}
    \centering
    \includegraphics[width=\textwidth]{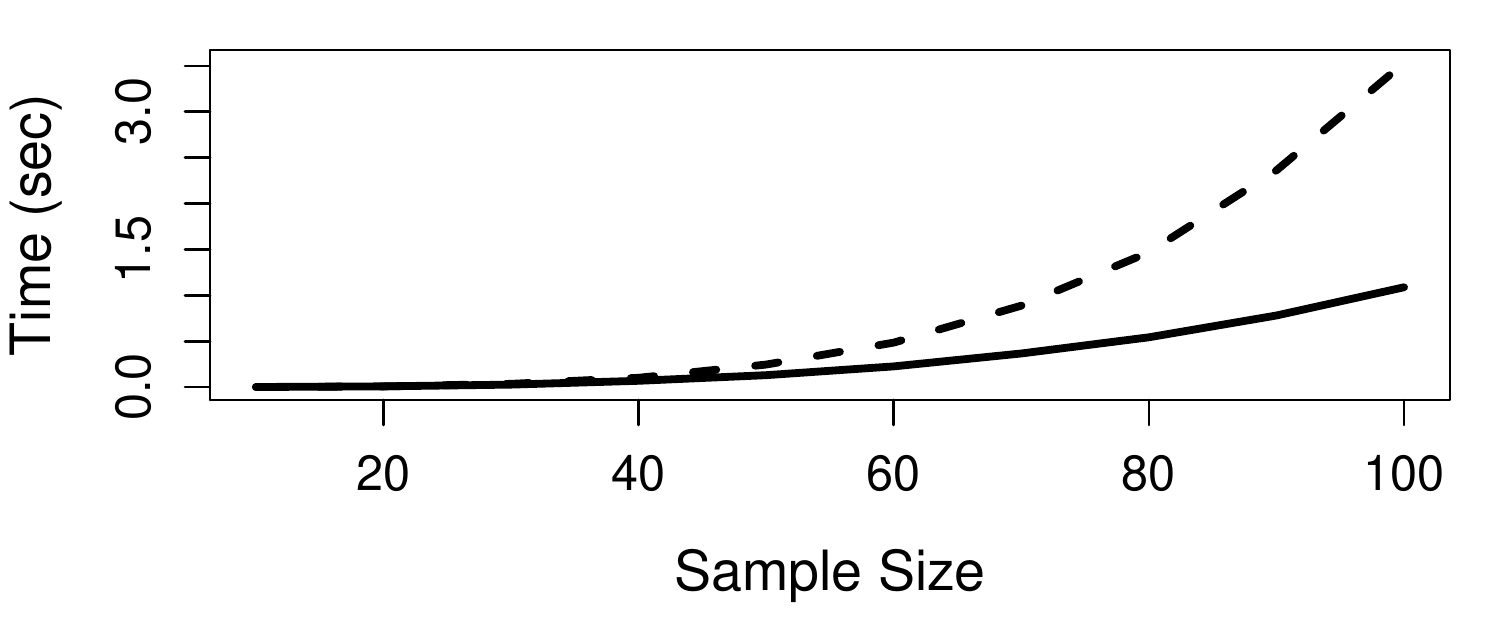}
    \caption{$U_{R},\ (X,Y)\sim N_3(0, I_3)$} \label{fig:u-r-3-time}
  \end{subfigure}
  
  \begin{subfigure}[t]{0.49\textwidth}
    \centering
    \includegraphics[width=\textwidth]{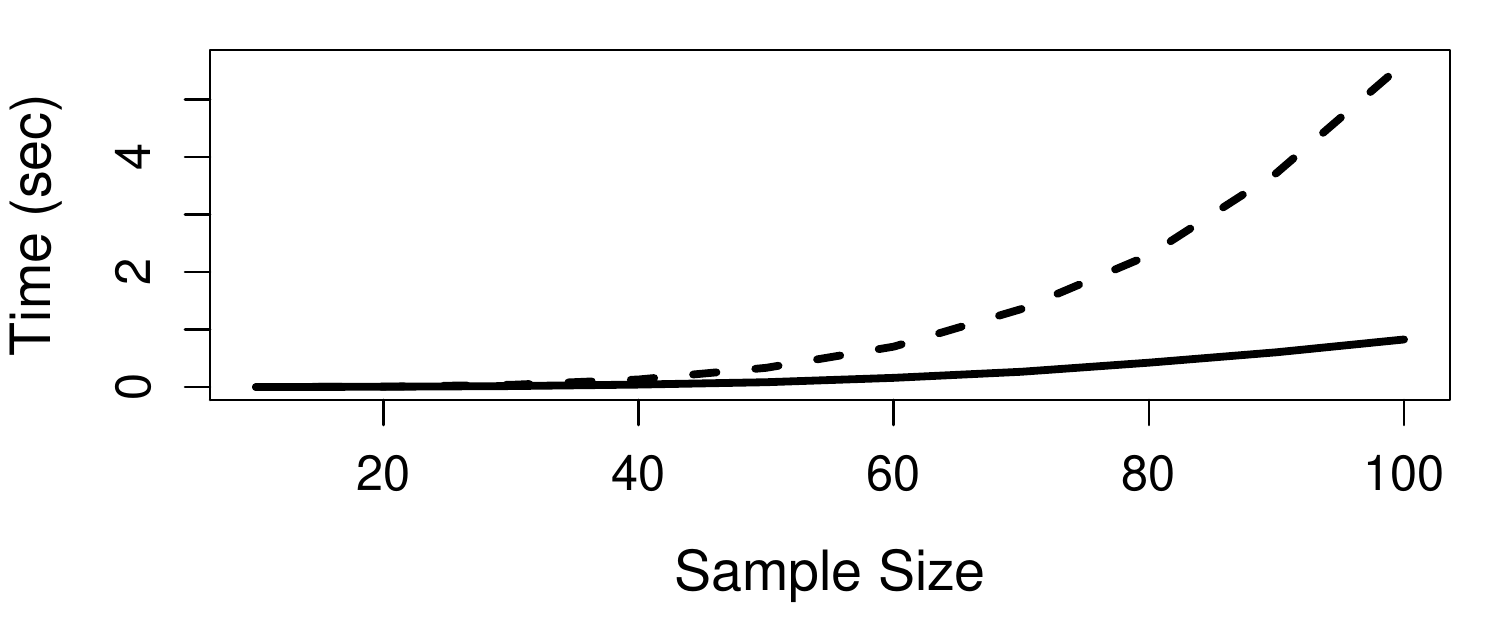}
    \caption{$U_{\tau^*_J},\ (X,Y)\sim N_3(0, I_3)$} \label{fig:u-j-3-time}
  \end{subfigure}
  \begin{subfigure}[t]{0.49\textwidth}
    \centering
    \includegraphics[width=\textwidth]{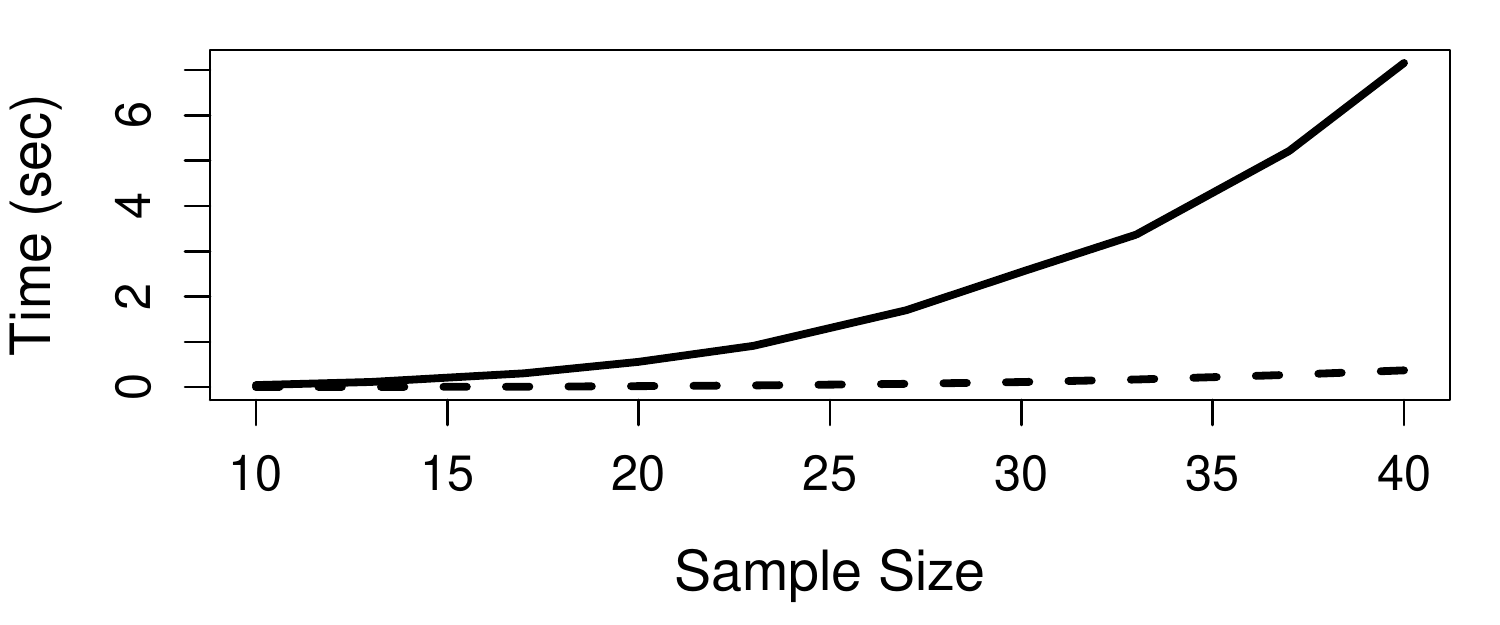}
    \caption{$U_{\tau^*_P},\ (X,Y)\sim N_3(0, I_3)$} \label{fig:u-p-3-time}
  \end{subfigure}

  \caption{The computation time of our U-statistics at various sample sizes comparing the benefits of using (solid lines), and not using (dashed lines), efficient data structures for orthogonal range queries. The na\"ive methods are substantially slower except for two cases, for sample sizes less than $\approx$3000 in (b) and for all tested sample sizes in (f). }
  \label{fig:u-time} 
\end{figure}

\appendix

\section{Asymptotic Theory of U-Statistics}

This section gives a brief review of the asymptotic theory of
U-statistics we require, the book of \cite{Serfling:1980} provides an
in-depth introduction to the topic for the interested reader. Let
$Z^1,Z^2,\dots$ be independent and identically distributed random
vectors taking their values in $\bR^d$ with $d\geq 1$. We say a
function $\kappa:\bR^{d\times m} \to \bR$ is a \emph{symmetric kernel
  function} if its value is invariant to any permutation of its $m$
arguments. Given a symmetric kernel function $\kappa$, we call
\begin{align}
  \label{eq:Un}
  U_n = \frac{1}{{n \choose m}} \sum_{(i_1,\dots,i_m)\in C(n,m)} \kappa(Z_{i_1},\dots,Z_{i_m}),
\end{align}
the \emph{U-statistic with kernel $\kappa$}.  Here, $C(n,m) =
\{(i_1,\dots,i_m)\in \{1,\dots,n\}^m: i_1 < i_2 < \dots < i_m\}$.
Clearly, $EU_n = E\kappa(Z^1,\dots,Z^m)$.

The asymptotics of U-statistics rely deeply on the functions
\begin{equation} \label{eq:k_i}
  \kappa_i(z_1,\dots,z_i) =  E[\kappa(z_1,\dots,z_i, Z_{i+1},\dots,Z_{m})], \quad \text{for $i =1,\dots,m$},
\end{equation}
and their variances
\begin{equation}\label{eq:sigma_i^2}
  \sigma_i^2 = \Var[\kappa_i(Z_1,\dots,Z_i)], \quad \text{for $i =1,\dots,m$}.
\end{equation}
It is well known that $\sigma_1^2\leq \sigma_2^2\leq \dots\leq \sigma_m^2$.

\begin{theorem}[\citet{Serfling:1980}] \label{theorem: normal asymptotics} 
  If the kernel $\kappa$ of the statistic $U_n$ from~(\ref{eq:Un}) has variance $\sigma_m^2 < \infty$, then
  \begin{align*}
    \sqrt{n}(U_n-\theta) \to N(0, m^2 \sigma_1^2)
  \end{align*}
  in distribution.
\end{theorem}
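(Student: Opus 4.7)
The plan is to prove this classical result via Hoeffding's projection method: I approximate $U_n$ by a sum of independent and identically distributed random variables, apply the ordinary central limit theorem to the approximation, and show that the remainder is negligible in $L^2$. First I introduce the projection
\[
\hat{U}_n = \theta + \sum_{i=1}^n \bigl(E[U_n \mid Z_i] - \theta\bigr).
\]
By symmetry of $\kappa$, for any $m$-subset $\{j_1,\dots,j_m\}\subset[n]$ we have $E[\kappa(Z_{j_1},\dots,Z_{j_m})\mid Z_i]=\kappa_1(Z_i)$ if $i$ is in the subset and $\theta$ otherwise. Counting the $\binom{n-1}{m-1}$ subsets containing $i$ yields $E[U_n\mid Z_i]=(m/n)\kappa_1(Z_i)+(1-m/n)\theta$, so
\[
\hat{U}_n-\theta=\frac{m}{n}\sum_{i=1}^n\bigl(\kappa_1(Z_i)-\theta\bigr).
\]
Since the summands are i.i.d.\ with mean zero and variance $\sigma_1^2<\infty$, the classical CLT yields $\sqrt{n}(\hat{U}_n-\theta)\to N(0,m^2\sigma_1^2)$ in distribution.

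The second step is to show that $\sqrt{n}(U_n-\hat{U}_n)\to 0$ in probability; Slutsky's theorem then finishes the proof. I would establish this via an $L^2$ bound, relying on the standard covariance identity for symmetric kernels, namely that $\Cov(\kappa(Z_{i_1},\dots,Z_{i_m}),\kappa(Z_{j_1},\dots,Z_{j_m}))=\sigma_c^2$ where $c=|\{i_1,\dots,i_m\}\cap\{j_1,\dots,j_m\}|$. Counting pairs of $m$-subsets of $[n]$ sharing exactly $c$ indices gives
\[
\Var(U_n)=\binom{n}{m}^{-1}\sum_{c=1}^{m}\binom{m}{c}\binom{n-m}{m-c}\sigma_c^2 = \frac{m^2\sigma_1^2}{n}+O(n^{-2}),
\]
where the leading $c=1$ term has coefficient $m\binom{n-m}{m-1}/\binom{n}{m}\sim m^2/n$, and the finiteness of $\sigma_m^2$ together with $\sigma_c^2\le\sigma_m^2$ bounds all higher-order contributions since $\binom{n-m}{m-c}/\binom{n}{m}=O(n^{-c})$ for $c\ge 2$.

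The last piece is to verify that $\hat{U}_n$ is the $L^2$ projection of $U_n-\theta$ onto the subspace of statistics of the form $\sum_i h(Z_i)$ with $E h(Z_1)=0$, yielding $\Var(\hat{U}_n)=m^2\sigma_1^2/n$ and $\Cov(U_n,\hat{U}_n)=m^2\sigma_1^2/n$. Combining these with the variance bound above gives
\[
E\bigl[(U_n-\hat{U}_n)^2\bigr]=\Var(U_n)+\Var(\hat{U}_n)-2\Cov(U_n,\hat{U}_n)=\Var(U_n)-\frac{m^2\sigma_1^2}{n}=O(n^{-2}),
\]
so $\sqrt{n}(U_n-\hat{U}_n)\to 0$ in $L^2$ and hence in probability. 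The main obstacle is really just the combinatorial variance decomposition of $U_n$ and matching its leading term to the variance of the projection; every other step is routine. The hypothesis $\sigma_m^2<\infty$ is used precisely to guarantee that all $\sigma_c^2$ are finite and that the non-leading contributions to $\Var(U_n)$ are genuinely of order $n^{-2}$.
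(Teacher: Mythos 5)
Your proof is correct and is the standard Hoeffding projection argument; the paper itself gives no proof, citing \citet{Serfling:1980}, where precisely this decomposition into $\hat{U}_n$ plus an $L^2$-negligible remainder (via the combinatorial variance formula $\Var(U_n)=\binom{n}{m}^{-1}\sum_{c=1}^m\binom{m}{c}\binom{n-m}{m-c}\sigma_c^2$) is used. All the individual computations you give, including $E[U_n\mid Z_i]=(m/n)\kappa_1(Z_i)+(1-m/n)\theta$ and $\Cov(U_n,\hat{U}_n)=m^2\sigma_1^2/n$, check out.
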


If $\sigma_1^2 = 0$ then the above asymptotic Gaussian distribution is
degenerate and $\sqrt{n}(U_n-\theta) \overset{p}{\to} 0$.  If
$\sigma_1^2 = 0$ and $\sigma_2^2 >0$, then $U_n$ is a \emph{degenerate
  of order 2} and one obtains a non-degenerate limiting
distribution by scaling $U_n$ by a factor of $n$.   In this case, the
limiting distribution is determined by the eigenvalues
of the operator $A_\kappa$ which maps a square integrable function
$g:\bR^d\to\bR$ to the function
$z\mapsto E[(\kappa_2(z, Z_1) - \theta)g(Z_1)]$. 

\begin{theorem}[\citet{Serfling:1980}] \label{theorem:
    degenerate asymptotics} If the kernel $\kappa$ of the statistic
  $U_n$ from~(\ref{eq:Un}) has variance $\sigma_m^2 < \infty$ and
  $\sigma_1^2 = 0 < \sigma_2^2$, then
  \begin{align*}
    n(U_n - \theta) \to {m \choose 2}\sum_{i=1}^\infty \lambda_i (\chi_{1i}^2 - 1)
  \end{align*}
  in distribution where $\chi_{11}^2,\chi_{12}^2,\dots$ are independent and identically distributed random variables
  that follow a chi-square distribution with 1 degree of freedom, and
  the $\lambda_i$'s are the eigenvalues, taken with multiplicity,
  associated with a system of orthonormal eigenfunctions of the
  operator $A_\kappa$.
\end{theorem}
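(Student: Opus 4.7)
The plan is to apply Hoeffding's $H$-decomposition to isolate the dominant second-order component of $U_n$ and then use the spectral theorem on the operator $A_\kappa$ to identify its limiting distribution.

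First I would form the canonical kernels
\begin{align*}
h_c(z_1,\dots,z_c) \;=\; \sum_{j=0}^{c}(-1)^{c-j}\sum_{\substack{S\subset[c]\\ |S|=j}}\kappa_j(z_S),\qquad c=1,\dots,m,
\end{align*}
where $\kappa_0\equiv\theta$, and set $U_n^{(c)}=\binom{n}{c}^{-1}\sum_{1\leq i_1<\cdots<i_c\leq n}h_c(Z_{i_1},\dots,Z_{i_c})$. A short inclusion--exclusion shows that each $h_c$ is completely degenerate, $E[h_c(z_1,\dots,z_{c-1},Z)]=0$, and yields the H-decomposition $U_n-\theta=\sum_{c=1}^{m}\binom{m}{c}U_n^{(c)}$. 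Orthogonality of the $h_c$ gives $\Var(U_n^{(c)})=\binom{n}{c}^{-1}E[h_c^2]=O(n^{-c})$. The hypothesis $\sigma_1^2=0$ forces $h_1\equiv 0$ and $h_2=\kappa_2-\theta$; combined with the variance bound, $nU_n^{(c)}=o_p(1)$ for $c\geq 3$. It therefore suffices to show $nU_n^{(2)}\tod\sum_{k\geq 1}\lambda_k(\chi^2_{1,k}-1)$ and multiply by $\binom{m}{2}$.

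Next I would apply the spectral theorem. Since $E[h_2^2]<\infty$, the operator $A_\kappa$ with kernel $\kappa_2-\theta$ is Hilbert--Schmidt, self-adjoint and compact on $L^2$ of the law of $Z$, so it admits real eigenvalues $\{\lambda_k\}$ with $\sum_k\lambda_k^2<\infty$ and an orthonormal eigensystem $\{\phi_k\}$. Mercer's theorem gives the $L^2$ expansion $\kappa_2(z_1,z_2)-\theta=\sum_k\lambda_k\phi_k(z_1)\phi_k(z_2)$. Taking expectation of $A_\kappa\phi_k=\lambda_k\phi_k$ against the law of $Z$ and using $\kappa_1\equiv\theta$ shows $E\phi_k(Z)=0$ whenever $\lambda_k\neq 0$. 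Substituting the expansion into $nU_n^{(2)}$ and rearranging yields
\begin{align*}
nU_n^{(2)} \;=\; \frac{1}{n-1}\sum_{k\geq 1}\lambda_k\Big[\Big(n^{-1/2}\textstyle\sum_{i=1}^{n}\phi_k(Z_i)\Big)^{\!2}-n^{-1}\textstyle\sum_{i=1}^{n}\phi_k(Z_i)^2\Big],
\end{align*}
and for each fixed $k$ the multivariate CLT together with the law of large numbers sends the bracketed quantity to $N_k^2-1\stackrel{d}{=}\chi^2_{1,k}-1$, with $\{N_k\}$ i.i.d.\ standard normal.

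The main obstacle is interchanging the infinite sum with the distributional limit. I would handle this by truncation. For each fixed $K$ the continuous mapping theorem applied to the finite-dimensional CLT yields weak convergence of the partial sum $\sum_{k\leq K}\lambda_k\,[\,\cdots\,]_k$ to $\sum_{k\leq K}\lambda_k(\chi^2_{1,k}-1)$, while the tail is controlled by the variance bound
\begin{align*}
\Var\!\Big(\sum_{k>K}\lambda_k\,[\,\cdots\,]_k\Big) \;\leq\; C\sum_{k>K}\lambda_k^2,
\end{align*}
which vanishes as $K\to\infty$ uniformly in $n$ by Hilbert--Schmidtness. A standard weak-convergence approximation lemma (e.g.\ Theorem 3.2 of Billingsley) then upgrades finite-$K$ convergence to convergence of the full series, completing the argument.
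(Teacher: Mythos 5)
The paper does not prove this statement: it is quoted as a classical result with a citation to Serfling (1980), so there is no in-paper argument to compare against. Your proof is the standard textbook derivation (Hoeffding's $H$-decomposition to reduce to the completely degenerate second-order projection, spectral expansion of the kernel $\kappa_2-\theta$, and a truncation argument to pass the infinite eigenvalue sum through the limit), and it is essentially correct and complete as a sketch. Two small blemishes are worth fixing. First, the expansion $\kappa_2(z_1,z_2)-\theta=\sum_k\lambda_k\phi_k(z_1)\phi_k(z_2)$ should be attributed to the spectral theorem for self-adjoint Hilbert--Schmidt operators, not to Mercer's theorem: Mercer requires continuity and positive semi-definiteness of the kernel, neither of which is assumed here, whereas the Hilbert--Schmidt theorem delivers exactly the $L^2(P\times P)$ convergence your truncation step needs. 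Second, your displayed identity for $nU_n^{(2)}$ is off by a factor of $n$: since $nU_n^{(2)}=\frac{2}{n-1}\sum_{i<j}h_2(Z_i,Z_j)$, the correct prefactor is $\frac{n}{n-1}$ rather than $\frac{1}{n-1}$ once the inner sums are normalised by $n^{-1/2}$ and $n^{-1}$; as written the expression would tend to zero. Neither issue affects the architecture of the argument, and the remaining steps (centred orthonormal eigenfunctions via $\kappa_1\equiv\theta$, the joint CLT for finitely many $\phi_k$ with independence of the Gaussian limits following from orthonormality, and the uniform-in-$n$ tail variance bound $\sum_{k>K}\lambda_k^2$ feeding into Billingsley's approximation theorem) are all sound.
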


\section{Proofs} \label{app:proofs}

\subsection{Proofs for Section \ref{sec:preliminaries}} \label{app:proofs-bivariate}

\begin{proofof}[Theorem \ref{thm:prod-consistent}]
  If $X\indep Y$, then $R(X,Y) = 0$ because $F_{XY}= F_XF_Y$.

  Now suppose that $X\cancel\indep Y$. Then, by the definition of independence, there exist $(x,y)\in\bR^{r+s}$ such that $F_{XY}(x,y) \not= F_X(x)F_Y(y)$. Since $F_{XY}(x,y) \leq \min(F_X(x), F_Y(y))$, $F_{XY}(x,y) \not= F_X(x)F_Y(y)$ implies that $F_{X}(x),F_Y(y) > 0$.

  We now define $\tilde{x}\in\bR^r, \tilde{y}\in \bR^s$ as follows. Let
  \begin{align*}
    \tilde{x}_i &= \arg\min\{x^*\mid x^* \leq x_i \text{ and } F_{X_i}(x^*) = F_{X_i}(x_i)\}  \quad (i=1,\dots,r) \quad \text{ and} \\
    \tilde{y}_i &= \arg\min\{y^*\mid y^*\leq y_i \text{ and } F_{Y_i}(y^*) = F_{Y_i}(y_i)\}  \hspace{6mm} (i=1,\dots,s).
  \end{align*}
  By the right continuity of cumulative distribution functions, we have that such $\tilde{x}_i,\tilde{y}_i$ exist and that $F_{X_i}(\tilde{x}_i) = F_{X_i}(x_i)$ and $F_{Y_j}(\tilde{y}_j) = F_{Y_j}(y_j)$ for all $i$ and $j$. We will now show that $F_{XY}(\tilde{x},\tilde{y}) = F_{X,Y}(x,y)$. Clearly $F_{XY}(\tilde{x},\tilde{y}) \leq F_{X,Y}(x,y)$. Suppose, for contradiction, that $F_{XY}(\tilde{x},\tilde{y}) < F_{X,Y}(x,y)$. Write $\tilde{z} = (\tilde{x},\tilde{y})$ and let $i$ be the smallest index for which $F_{XY}(\tilde{z}_1,\dots,\tilde{z}_{i},z_{i+1},\dots,z_{r+s}) < F_{X,Y}(z)$, by assumption such an $i$ exists. Without loss of generality assume that $i \leq r$. Then we have that
  \begin{align*}
    F_{X_i}(x_i) &= P(X_i \leq x_i) \\
                 &= F_{XY}(\tilde{x}_1,\dots,\tilde{x}_{i-1},x_i,x_{i+1},\dots,x_r,y) + P(X_i \leq x_i \text{ and } (X_1 > \tilde{x_1} \text{ or } \dots \text{ or } Y_s > y_s))).
  \end{align*}
  Now clearly both $F_{XY}(\tilde{x}_1,\dots,\tilde{x}_{i-1},x_i,x_{i+1},\dots,x_r)$ and $P(X_i \leq x_i \text{ and } (X_1 > \tilde{x_1} \text{ or } \dots \text{ or } Y_s > y_s)))$ are non-decreasing in $x_i$ and thus, since $F_{XY}(\tilde{z}_1,\dots,\tilde{z}_{i},z_{i+1},\dots,z_{r+s}) < F_{X,Y}(z)$, we have that
  \begin{align*}
    F_{X_i}(x_i) &> F_{XY}(\tilde{x}_1,\dots,\tilde{x}_{i-1},\tilde{x}_i,x_{i+1},\dots,x_r) + P(X_i \leq \tilde{x}_i \text{ and } (X_1 > \tilde{x_1} \text{ or } \dots \text{ or } Y_s > y_s))) \\
                 &= F_{X_i}(\tilde{x}_i).
  \end{align*}
  But this contradicts what we have shown above, that $F_{X_i}(x_i) = F_{X_i}(\tilde{x}_i)$. It follows that $F_{XY}(\tilde{x},\tilde{y}) = F_{XY}(x,y)$ as claimed. An essentially identical argument to the one above also shows that $F_{X}(\tilde{x}) = F_X(x)$ and $F_Y(\tilde{y}) = F_Y(y)$. Hence we have that
  \begin{align*}
    F_{XY}(\tilde{x},\tilde{y}) - F_X(\tilde{x})F_Y(\tilde{y}) = F_{XY}(x,y) - F_X(x)F_Y(y) \not= 0.
  \end{align*}

  Now let $\cI_X = \{i\in [r] \mid \lim_{x^*\to \tilde{x}_i -}F_{X_i}(x^*) \not= F_{X_i}(\tilde{x}_i)\}$ so that for all $i\in\cI_X$ we have that $F_{X_i}$ has a jump discontinuity at $\tilde{x}_i$ and thus $P(X_i = \tilde{x}_i) > 0$. Let $\cI_Y$ be the corresponding set of such indices for the $F_{Y_i}$. Now, for any $i\in [r]$ and $\delta>0$ define $B^i_{\delta} = [\tilde{x}_i-\delta,\tilde{x}_i]$ if $i\not\in \cI_X$ and $B^i_{\delta}=\{\tilde{x}_i\}$ if $i\in\cI_X$. By our definition of $\tilde{x}_i$ and $\cI_X$ we have that $P(X_i\in B^i_{\delta}) > 0$. Similarly define, for any $i\in [s]$ and $\delta>0$, $C^i_{\delta} = [\tilde{y}_i-\delta,\tilde{y}_i]$ if $i\not\in \cI_Y$ and $C^i_{\delta} = \{\tilde{y}_i\}$ if otherwise. Again we have that $P(Y_i\in C^i_{\delta}) > 0$. Let $B_\delta = B^1_{\delta}\times \dots \times B^r_{\delta}$ and $C_\delta = C^1_{\delta}\times \dots \times C^s_{\delta}$.

  Claim: there exists $\delta>0$ such that for all
  \begin{align*}
    (x,y) \in D_\delta = B_{\delta} \times C_{\delta}
  \end{align*}
  we have $F_{XY}(x,y) - F_X(x)F_Y(y) \not= 0$.

  If this claim is true we have that
  \begin{align*}
    R(X,Y)&=\int_{\bR^{r+s}}(F_{XY}(x,y)-F_X(x)F_Y(y))^2\prod_{i=1}^r \text{d}F_{X_i}(x_i)\prod_{j=1}^s\dF_{Y_j}(y_j)\\
    &\geq \int_{D_{\delta}}(F_{XY}(x,y)-F_X(x)F_Y(y))^2\prod_{i=1}^r \text{d}F_{X_i}(x_i)\prod_{j=1}^s\dF_{Y_j}(y_j)\\
    &>0
  \end{align*}
  where the last inequality follows since $D_{\delta}$ has positive measure under $\text{d}F_{X_i}(x_i)\prod_{j=1}^s\dF_{Y_j}(y_j)$ and $(F_{XY}(x,y)-F_X(x)F_Y(y))^2$ is strictly positive for $(x,y)\in D_{\delta}$.

  We now prove the claim. First let $\phi:[0,\infty]\to \bR^r$ be
  defined such that, for all $t\in[0,\infty]$, $\phi(t)_i =
  \tilde{x}_i$ if $i\in \cI_X$ and $\phi(t)_i = t\tilde{x}_i$ if
  $i\not\in\cI_X$. Each $\phi(t)_i$ is non-decreasing in
  $t$. Next consider the function $G:[0,\infty]\to [0,1]$ defined such
  that $G(t) = F_X(\phi(t))$. $G$ is monotone non-decreasing
  and so has only jump discontinuities. If $G$ does not have a jump discontinuity at $t=1$ then for any $\epsilon>0$ we may pick $\delta<1$ sufficiently small that
  \begin{align*}
    F_X(\tilde{x}) - F_X(\phi(s)) = |F_X(\tilde{x}) - F_X(\phi(s))| = |G(1)-G(s)| < \epsilon
  \end{align*}
  for all $1-\delta \leq s\leq 1$. But for any $x \in B_{\delta}$ we have $\phi(1-\delta) \preceq x\preceq \tilde{x}$ and thus
  \begin{align*}
    |F_X(\tilde{x}) - F_X(x)| = F_X(\tilde{x}) - F_X(x) = F_X(\tilde{x}) - F_X((1-\delta)\tilde{x}) = G(1)-G(1-\delta) < \epsilon.
  \end{align*}
  Now suppose otherwise that $G(t)$ has a jump discontinuity at $t=1$. That is, we have $F_X(\tilde{x})-\lim_{t\to1^-}G(t) = a > 0$. Then, by the monotone convergence theorem, 
  \begin{align*}
    a &= F_X(\tilde{x})-\lim_{t\to1^-}G(t) \\
      &= P(\bigwedge_{i=1}^r X_i\leq \tilde{x}_i) - P(\bigwedge_{i\in \cI_X}(X_i \leq \tilde{x}_i) \wedge \bigwedge_{i\not\in \cI_X}(X_i < \tilde{x}_i)) \\
      &= P(\bigwedge_{i\in \cI_X}(X_i \leq \tilde{x}_i) \wedge \bigvee_{i\not\in \cI_X}(X = \tilde{x}_i)) \\
      &\leq P(\bigvee_{i\not\in \cI_X} X_i = \tilde{x}_i) \\
      &\leq \sum_{i\not\in \cI_X} P(X_i = \tilde{x}_i).
  \end{align*}
  But by definition of $\cI_X$ we have that $P(X_i = \tilde{x}_i) =0$ for all $i\in \cI_X$, it thus follows that $0<a\leq 0$ a contradiction. Hence $G(t)$ does not have a jump discontinuity at $t=1$.

  Similar arguments hold for $F_Y$ and $F_{XY}$ and hence, for any
  $\epsilon >0$ there exists some $\delta >0$ such that for any
  $(x,y)\in B_\delta\times C_\delta$ we have $|F(\tilde{x},\tilde{y})
  - F_{XY}(x,y)|, |F(\tilde{x})-F_X(x)|, |F(\tilde{y})-F_Y(y)| <
  \epsilon$. Thus, choosing $\epsilon >0$ sufficient small, clearly
  there exists $\delta$ such that $F_{XY}(x,y)-F_X(x)F_Y(y) \not=0$ for all
  $(x,y)\in B_\delta\times C_\delta$.  This completes the proof as
  noted above.
\end{proofof}

\subsection{Proofs for Section \ref{sec:src}} \label{app:src}

\begin{proofof}[Proposition \ref{prop:standard-measures-are-srcs}]
  \cite{BergsmaDassios14} show that
  \begin{align*}
    \tau^* = E[a(X^{[4]})\ a(Y^{[4]})]
  \end{align*}
  where
  \begin{align*}
    a(w^{[4]}) &= I_{[w^1,w^2<w^3,w^4]} + I_{[w^3,w^4<w^1,w^2]} - I_{[w^1,w^3<w^2,w^4]} - I_{[w^2,w^4<w^1,w^3]}.
  \end{align*}
  This is exactly our claimed result.

  Recall that $\tau$ can be expressed as
  \begin{align*}
    \tau = E[2\ I_{[X^1 < X^2]}\ (I_{[Y^1 < Y^2]} - I_{[Y^2 < Y^1]})].
  \end{align*}
  Lemma \ref{lem:alternate-src-forms} then immediately gives our result for $\tau$. Moreover, letting $\gamma=\nu=\tau$ in the proof of Proposition \ref{prop:general-src-results} and relabeling 2 as 4, and vice versa, we have our claimed form for $\tau^2$.

  We now show that our result for $D$. For any $z=(x,y)\in\bR^{r+s}$ let
  \begin{align}
    c^+(x,y) =&\ F_{XY}(x,y)^2 (1-F_X(x)-F_Y(y)+F_{XY}(x,y))^2, \label{eq:multivariate-D-con-p}\\
    c^-(x,y) =&\ (F_X(x)-F_{XY}(x,y))^2 (F_Y(y)-F_{XY}(x,y))^2, \label{eq:multivariate-D-con-n}\\
    c(x,y) =&\ c^+(x,y) + c^-(x,y), \text{ and} \label{eq:multivariate-D-con}\\
    d(x,y) =&\ \sqrt{c^+(x,y)c^-(x,y)} \label{eq:multivariate-D-dis}\\
    =&\ (F_X(x)-F_{XY}(x,y)) (F_Y(y)-F_{XY}(x,y)) F_{XY}(x,y) (1-F_X(x)-F_Y(y)+F_{XY}(x,y)). \nonumber
  \end{align}
  It is easy to check that $c(x,y) - 2\ d(x,y) = (F_{XY}(x,y) - F_X(x)F_Y(y))^2$ and thus
  \begin{align}\label{eq:d-as-concordant-diff}
    D = \int_{\bR^2} c(x,y) - 2d(x,y) \dF_{XY}(x,y).
  \end{align}
  It is interesting to note that $4(c(x,y)- 2d(x,y)) = \tau^*(1_{[X\leq x]}, 1_{[Y\leq y]})$ so that $D$ can be interpreted as a weighted integral of $\tau^*$ applied to discretised versions of the $Z^i$.  This is the perspective from Section \ref{sec:discretization}. Now
  \begin{align*}
    c^+(x,y) &= E[1_{[X^1,X^2\preceq x]}1_{[X^3,X^4\not\preceq x]}1_{[Y^1,Y^2\preceq y]}1_{[Y^3,Y^4\not\preceq y]}], \\
    c^-(x,y) &= E[1_{[X^1,X^2\preceq x]}1_{[X^3,X^4\not\preceq x]}1_{[Y^3,Y^4\preceq y]}1_{[Y^1,Y^2\not\preceq y]}], \quad \text{ and }\\
    d(x,y) &= E[1_{[X^1,X^2\preceq x]}1_{[X^3,X^4\not\preceq x]} 1_{[Y^1,Y^3\preceq y]} 1_{[Y^2,Y^4\not\preceq y]}] \\
    &= E[1_{[X^1,X^2\preceq x]}1_{[X^3,X^4\not\preceq x]} 1_{[Y^2,Y^4\preceq y]}1_{[Y^1,Y^3\not\preceq y]}].
  \end{align*}
  This gives
  \begin{align*}
    &\int_{\bR^{r+s}}c^+(x,y)\dF_{XY}(x,y) = \int_{\bR^2}E[1_{[X^1,X^2\preceq x]}1_{[X^3,X^4\not\preceq x]}1_{[Y^1,Y^2\preceq y]}1_{[Y^3,Y^4\not\preceq y]}]\dF_{XY}(x,y) \\
    &= E[\int_{\bR^{r+s}}1_{[X^1,X^2\preceq x]}1_{[X^3,X^4\not\preceq x]}1_{[Y^1,Y^2\preceq y]}1_{[Y^3,Y^4\not\preceq y]}\dF_{XY}(x,y)] \\
    &= E[1_{[X^1,X^2\preceq X^5]}1_{[X^3,X^4\not\preceq X^5]}1_{[Y^1,Y^2\preceq Y^5]}1_{[Y^3,Y^4\not\preceq Y^5]}] \\
    &= E[I_D(X^{[5]})I_D(Y^{[5]})]
  \end{align*}
  Similarly one may also show that
  \begin{align*}
    \int_{\bR^2}c^-(x,y)\dF_{XY}(x,y) &= E[I_D(X^{[5]})I_D(Y^{4,3,2,1,5})], \text{ and} \\
    \int_{\bR^2}d(x,y)\dF_{XY}(x,y) &= E[I_D(X^{[5]})I_D(Y^{1,3,2,4,5})] \\
                                 &= E[I_D(X^{[5]})I_D(Y^{4,2,3,1,5})].
  \end{align*}
  From this and Equation \eqref{eq:d-as-concordant-diff} it is easy to see that
  \begin{align*}
    D(X,Y) = E[I_D(X^{[5]})\sum_{\sigma\in H_{\tau^*}}\sign(\sigma)I_D(Y^{[5]})].
  \end{align*}
  Our claim then follows by Lemma \ref{lem:alternate-src-forms}. Following essentially identical steps as for $D(X,Y)$, one may also show our desired result for $R(X,Y)$.
\end{proofof}

\begin{proofof}[Proposition \ref{prop:general-src-results}]
  Without loss of generality let $\mu$ be as in Equation \eqref{eq:src}.

  We first show that $\mu$ is nonparametric. Letting $h_{X,i}:\bR\to\bR$ and $h_{Y,j}:\bR\to\bR$ strictly increasing functions for $i\in[r],\ j\in[s]$ and letting $h_X(x) = (h_{X,1}(x_1),\dots,h_{X,r}(x_r))$ and $h_Y(y) = (h_{Y,1}(y_1),\dots,h_{Y,s}(y_s))$ we wish to show that $\mu(X,Y) = \mu(h_X(X), h_Y(Y))$.

  It is trivial to check that, as the $h_{X,i}$ and $h_{Y,i}$ are strictly increasing we have that $\cR(X^{[m]})=\cR(h_X(X^{[r]}))$ and $\cR(Y^{[m]})=\cR(h_Y(Y^{[s]}))$. Given this, the claim follows immediately as rank indicator functions depend on their inputs only through the joint ranks of the inputs.

Next we show that $\mu$ is I-consistent. Recall that, by definition, $H\subset S_m$ has an equal number of even and odd permutations. It follows then that
  \begin{align*}
    \sum_{\sigma\in H}\sign(\sigma) = 0.
  \end{align*}
  When $X\indep Y$, since the $X^{[m]}$ and $Y^{[m]}$ are independent and identically distributed respectively, we have
  \begin{align*}
    \mu(X,Y) &= E\Big[\Big(\sum_{\sigma \in H} \sign(\sigma)\ I_{X}(X^{\sigma[m]})\Big)\ \Big(\sum_{\sigma \in H}\sign(\sigma)\ I_{Y}(Y^{\sigma[m]})\Big)\Big] \\
             &= E\Big[I_{X}(X^{[m]})\Big]\ E\Big[I_{Y}(Y^{[m]})\Big]\ (\sum_{\sigma \in H} \sign(\sigma))\ (\sum_{\sigma \in H}\sign(\sigma))\\
             &= E\Big[I_{X}(X^{[m]})\Big]\ E\Big[I_{Y}(Y^{[m]})\Big]\cdot 0 \cdot 0\\
             &= 0
  \end{align*}
  which proves the claim.
  
  Finally we show that Symmetric Rank Covariances are closed under products. Without loss of generality assume that $\gamma = \mu_{I_X,I_Y,H}$ and $\nu = \mu_{\tilde{I}_X,\tilde{I}_Y,\tilde{H}}$. We have that
  \begin{align*}
    \mu(X,Y) &= E\Big[\Big(\sum_{\sigma \in H} \sign(\sigma)\ I_{X}(X^{\sigma[m]})\Big)\ \Big(\sum_{\sigma \in H}\sign(\sigma)\ I_{Y}(Y^{\sigma[m]})\Big)\Big],\quad \text{and} \\
    \nu(X,Y) &= E\Big[\Big(\sum_{\sigma \in \tilde{H}} \sign(\sigma)\ \tilde{I}_{X}(X^{\sigma[n]})\Big)\ \Big(\sum_{\sigma \in \tilde{H}}\sign(\sigma)\ \tilde{I}_{Y}(Y^{\sigma[n]})\Big)\Big].
  \end{align*}
  In the following we will implicitly let $I_X(x^{[m+n]}) = I_X(x^{[m]})$, $I_Y(y^{[m+n]}) = I_Y(y^{[m]})$, $\tilde{I}_X(x^{[m+n]}) = \tilde{I}_X(x^{[n]})$, and $\tilde{I}_Y(y^{[m+n]}) = \tilde{I}_Y(y^{[n]})$ so that these indicator functions drop unused inputs. Now let $\gamma\in S_{[m+n]}$ be the permutation that cyclically shifts all elements $n$ units to the right, so that $i\in[m+n]$ is taken to $i + n \mod (m + n)$ by $\gamma$. Then let $g_\gamma:\bR^{d\times (m+n)}\to\bR^{d\times(m+n)}$ be the function which acts on it's input with $\gamma$, that is we let $g_\gamma(w^{[m+n]}) = w^{\gamma[m+n]}$ for all $w^{[m+n]}\in \bR^{d\times (m+n)}$. Then define
  \begin{align*}
    \ol{H} &= \gamma^{-1} \tilde{H} \gamma, \quad
             \ol{I}_X =  \tilde{I}_X\circ g_\gamma, \quad \text{and} \quad
    \ol{I}_Y =  \tilde{I}_Y\circ g_\gamma.
  \end{align*}
  Clearly $\ol{H}$ is a subgroup of $S_{m+n}$ and it is easy to check that
  \begin{align*}
    \nu(X,Y) &= E\Big[\Big(\sum_{\sigma \in \ol{H}} \sign(\sigma)\ \ol{I}_{X}(X^{\sigma[m+n]})\Big)\ \Big(\sum_{\sigma \in \ol{H}}\sign(\sigma)\ \ol{I}_{Y}(Y^{\sigma[m+n]})\Big)\Big].
  \end{align*}
   Now
  \begin{align*}
    A &= \Big(\sum_{\sigma \in H} \sign(\sigma)\ I_{X}(X^{\sigma[m+n]})\Big)\ \Big(\sum_{\sigma \in H}\sign(\sigma)\ I_{Y}(Y^{\sigma[m+n]})\Big) \text{ and } \\
    B &= \Big(\sum_{\sigma \in \ol{H}} \sign(\sigma)\ \ol{I}_{X}(X^{\sigma[m+n]})\Big)\ \Big(\sum_{\sigma \in \ol{H}}\sign(\sigma)\ \ol{I}_{Y}(Y^{\sigma[m+n]})\Big)
  \end{align*}
  depend only on $Z^{[m+n]}$ through the entries $Z^{[m]}$ and $Z^{m+1,\dots,m+n}$ respectively, it thus follows that $A$ and $B$ are independent. Thus, by how we have defined $\ol{I}_X,\ol{I}_Y$ and $\ol{H}$, we have that
  \begin{align*}
    \gamma\ \nu &= E[A]E[B] \\
                    &= E[AB] \\
                    &= \Big(\sum_{\substack{\sigma \in H \\ \ol{\sigma} \in \ol{H}}} \sign(\sigma\ol{\sigma})\ (I_X\ \ol{I}_{X})(X^{(\sigma\ol{\sigma})[m+n]})\Big)\ \Big(\sum_{\substack{\sigma \in H \\ \ol{\sigma} \in \ol{H}}}\sign(\sigma\ol{\sigma}) \ (I_Y\ \ol{I}_{Y})(Y^{(\sigma\ol{\sigma})[m+n]})\Big) \\
                    &= \Big(\sum_{\sigma \in H\tilde{H}} \sign(\sigma)\ (I_X\ \ol{I}_{X})(X^{\sigma[m+n]})\Big)\ \Big(\sum_{\sigma \in H\tilde{H}}\sign(\sigma) \ (I_Y\ \ol{I}_{Y})(Y^{\sigma[m+n]})\Big)
  \end{align*}

  Where the last line follows since that $H\cap \ol{H}=\{e\}$ implies that $|H\ol{H}|=|H||\ol{H}|$ and since elements of $H$ and $\ol{H}$ commute we have that $H\ol{H}$ is a subgroup of $S_{[m+n]}$. The above equality shows that $\gamma \nu$ is a Symmetric Rank Covariance as claimed.
\end{proofof}

\begin{proofof}[Lemma \ref{lem:alternate-src-forms}]
  For any $\gamma\in H$, by relabeling $Z^{[m]}$ as $Z^{\gamma^{-1}[m]}$, we have that
  \begin{align*}
    E\Big[\sign(\gamma)\ &I_X(X^{\gamma[m]})\ \sum_{\sigma\in H}\sign(\sigma)\ I_Y(Y^{\sigma[m]})\Big] \\
                             &= E\Big[I_X(X^{\gamma\gamma^{-1}[m]})\ \sum_{\sigma\in H}\sign(\sigma)\ \sign(\gamma)\ I_Y(Y^{\sigma\gamma^{-1}[m]})\Big] \\
                             &= E\Big[I_X(X^{[m]})\ \sum_{\sigma\in H}\sign(\sigma\gamma^{-1})\ I_Y(Y^{\sigma\gamma^{-1}[m]})\Big] \\
                             &= E\Big[I_X(X^{[m]})\ \sum_{\sigma\in H}\sign(\sigma)\ I_Y(Y^{\sigma[m]})\Big]
  \end{align*}
  where the third equality holds since $\sign(\gamma) = \sign(\gamma^{-1})$ and the fourth equality holds since $\gamma\in H\implies H = H\gamma^{-1}$. Now plugging the above equality into our definition of $\mu(X,Y)$ gives Equation \eqref{eq:mu-with-x}. By symmetry we obtain Equation \eqref{eq:mu-with-y}.
\end{proofof}

\subsection{Proofs for Section \ref{sec:generalizing}} \label{app:proofs-generalizing}

\begin{proofof}[Proposition \ref{prop:block-minors}]
  Block minors of $M(x,y)$ include the usual $2\times 2$ minors and thus if all such block minors vanish we have $B^X(x)\indep B^Y(y)$ by the discussion below Definition \ref{def:binarization}. Now suppose that $B^X(x)\indep B^Y(y)$ and let $L,L' \subset\{0,1\}^r$ and $R,R' \subset\{0,1\}^s$. Then we have that
  \begin{align*}
    &(\sum_{\substack{\ell_X\in L \\ \ell_Y \in R}} p(z)_{\ell_X\ell_Y})(\sum_{\substack{\ell_X\in L' \\ \ell_Y \in R'}} p(z)_{\ell_X'\ell_Y'}) - (\sum_{\substack{\ell_X'\in L' \\ \ell_Y \in R}} p(z)_{\ell_X'\ell_Y})(\sum_{\substack{\ell_X\in L \\ \ell_Y \in R'}} p(z)_{\ell_X\ell_Y'}) \\
    &= P(B^X(x) \in L,\ B^Y(y)\in R)\ P(B^X(x) \in L',\ B^Y(y)\in R') \\
    &\quad - P(B^X(x) \in L',\ B^Y(y)\in R)\ P(B^X(x) \in L,\ B^Y(y)\in R') \\
    &= P(B^X(x) \in L)\ P(B^Y(y)\in R)\ P(B^X(x) \in L')\ P(B^Y(y)\in R') \\
    &\quad - P(B^X(x) \in L') \ P(B^Y(y)\in R)\ P(B^X(x) \in L)\ P(B^Y(y)\in R') \\
    &= 0
  \end{align*}
  by independence.
\end{proofof}

\begin{proofof}[Proposition \ref{prop:isms-are-srcs}]
  Recall that for any $d\geq 1$, $0_d\in \bR^d$ is the vector of all $0$s. Let $d = r+s$. By definition and simple algebra
  \begin{align*}
    A(x,y)^2 &= \Big(\sum_{\substack{\ell_X\in L \\ \ell_Y \in R}}(p(z)_{0_d}p(z)_{\ell_X\ell_Y} - p(z)_{\ell_X0_s}p(z)_{0_r\ell_Y})\Big)^2 \\
             &= p(z)_{0_d}^2(\sum_{\substack{\ell_X\in L \\ \ell_Y \in R}}p(z)_{\ell_X\ell_Y})^2 + (\sum_{\ell_X\in L}p(z)_{\ell_X0_s})^2(\sum_{\ell_Y\in R}p(z)_{0_r\ell_Y})^2 \\
             &- 2\ p(z)_{0_d}(\sum_{\substack{\ell_X\in L \\ \ell_Y \in R}}p(z)_{\ell_X\ell_Y})(\sum_{\ell_X\in L}p(z)_{\ell_X0_s})(\sum_{\ell_Y\in R}p(z)_{0_r\ell_Y}).
  \end{align*}

   Also, since $\lambda_{XY}(x,y) = \prod_{i=1}^t F_{X_{E_i}Y_{F_i}}(x_{E_i},y_{F_i})$, $\lambda_{XY}$ is the cumulative distribution function of a random vector in $\bR^{r+s}$ whose entries are taken from $Z^{5,\dots,4+t}$, in particular we may, for $j\in [r+s]$, let
  \begin{align*}
    W_j = \twopartdef{X^{4 + k}}{j\in [r] \text{ and } j\in E_k,\quad \text{and}}{Y^{4+k}}{r+1\leq j\leq r+s \text{ and } j-r\in F_k}.
  \end{align*}
  Here $W$ is just a projection of $Z^{5,\dots,4+t}$. Given this fact we have that, for any integrable function $g$, $\int_{\bR^{r+s}}g(z)\dlambda_{XY} = E[g(W)]$. Now, by a direct computation, we have that
  \begin{align*}
    \int_{\bR^{r+s}} p(z)_{0_d}^2&(\sum_{\substack{\ell_X\in L \\ \ell_Y \in R}}p(z)_{\ell_X\ell_Y})^2 \dlambda_{XY}(x,y) \\
                                    &= \int_{\bR^{r+s}} E\Big[
                                      1_{[Z^1, Z^2\preceq z]}
                                      \Big(\sum_{\ell^X\in L}1_{[X^3,X^4 \lesseqgtr_{\ell^X} x]}\sum_{\ell^Y\in R}1_{[Y^3,Y^4 \lesseqgtr_{\ell^Y} y]}\Big)\dlambda_{XY}(x,y) \\
                                    &= E\Big[
                                      1_{[Z^1, Z^2\preceq W]}
                                      \Big(\sum_{\ell^X\in L}1_{[X^3,X^4 \lesseqgtr_{\ell^X} W^X]}\sum_{\ell^Y\in R}1_{[Y^3,Y^4 \lesseqgtr_{\ell^Y} W^Y]}\Big)\Big] \\
                                    & E\Big[I_X(X^{[4+t]})I_Y(Y^{[4+t]})\Big]
  \end{align*}

  Similarly we have that
  \begin{align*}
    \int_{\bR^{r+s}}& (\sum_{\ell_X\in L}p(z)_{\ell_X0_s})^2(\sum_{\ell_Y\in R}p(z)_{0_r\ell_Y})^2 \dlambda_{XY}(x,y) = E\Big[I_X(X^{[4+t]})I_Y(Y^{4,2,3,1,5,\dots,t})\Big]
  \end{align*}
  and
  \begin{align*}
    \int_{\bR^{r+s}}& p(z)_{0_d}(\sum_{\substack{\ell_X\in L \\ \ell_Y \in R}}p(z)_{\ell_X\ell_Y})(\sum_{\ell_X\in L}p(z)_{\ell_X0_s})(\sum_{\ell_Y\in R}p(z)_{0_r\ell_Y}) \dlambda_{XY}(x,y)\\
                    &= E\Big[I_X(X^{[4+t]})I_Y(Y^{1,3,2,4,5,\dots,t})\Big] \\
                    &= E\Big[I_X(X^{[4+t]})I_Y(Y^{4,2,3,1,5,\dots,t})\Big].
  \end{align*}
  Thus
  \begin{align*}
    &\int_{\bR^{r+s}}A(x,y)^2 \dlambda_{XY}(x,y) \\
                    &=E\Big[I_X(X^{[4+t]})\Big(I_Y(Y^{[4+t]}) + I_Y(Y^{4,2,3,1,5,\dots,t}) - I_Y(Y^{4,2,3,1,5,\dots,t}) - I_Y(Y^{1,3,2,4,5,\dots,t})\Big)\Big].
  \end{align*}
  Our result then follows by Lemma \ref{lem:alternate-src-forms}.
  
  It now remains to show that $D$ and $R$ are Integrated Squared Minors. But this is easy, recall from the proof of Proposition \ref{prop:standard-measures-are-srcs} that
  \begin{align*}
    &(F_{XY}(x,y) - F_X(x)F_Y(y))^2 \\
            &= F_{XY}(x,y)^2(1-F_X(x)-F_Y(y)+F_{XY}(x,y))^2 + (F_X(x)-F_{XY}(x,y))^2(F_Y(y)-F_{XY}(x,y))^2 \\
            &\quad - 2\ F_{XY}(x,y)(1-F_X(x)-F_Y(y)+F_{XY}(x,y))(F_X(x)-F_{XY}(x,y))(F_Y(y)-F_{XY}(x,y))\\
            &= c^+(x,y) + c^-(x,y) - 2d(x,y)
  \end{align*}
  where $c^+,c^-,$ and $d$ are given by Equations \eqref{eq:multivariate-D-con-p}--\eqref{eq:multivariate-D-dis}.
  But
  \begin{align*}
    F_{XY}(x,y) &= p(z)_{0_d}, \\
    F_Y(y)-F_{XY}(x,y) &= \sum_{ \ell_X\in\{0,1\}^r\setminus\{0_r\}}  p(z)_{\ell_X0_s}, \\
    F_X(x)-F_{XY}(x,y) &= \sum_{ \ell_Y\in\{0,1\}^s\setminus\{0_s\}}  p(z)_{0_r\ell_Y}, \text{ and}\\
    1-F_X(x)-F_Y(y)+F_{XY}(x,y) &= \sum_{\substack{\ell_X\in\{0,1\}^r\setminus\{0_r\} \\ \ell_Y\in\{0,1\}^s\setminus\{0_s\}}}  p(z)_{\ell_X\ell_Y}.
  \end{align*}
  Thus $(F_{XY}(x,y) - F_X(x)F_Y(y))^2$ equals the square of the $2\times 2$ block minor of $M(x,y)$ along $(\{0_r\},\ \{0,1\}^r\setminus \{0_r\},\ \{0_s\},\ \{0,1\}^s\setminus \{0_s\})$. That is, we have
  \begin{align}
    D(X,Y) &= \int_{\bR^{r+s}} \Big(\sum_{\substack{\ell_X\in\{0,1\}^r\setminus\{0\} \\ \ell_Y\in\{0,1\}^s\setminus\{0\}}} (p(z)_{0_d}p(z)_{\ell_X\ell_Y} - p(z)_{0_r\ell_Y}p(z)_{\ell_X0_s})\Big)^2 \dF_{XY}(x,y) \label{eq:D-equal-ism}\\
           &=\int_{\bR^{r+s}} A(x,y)^2\dF_{XY}(x,y) \nonumber.
  \end{align}
  Thus $D(X,Y)$ is indeed an Integrated Square Minor as claimed. That $R(X,Y)$ is an Integrated Square Minor follows in exactly the same way.
\end{proofof}

\begin{proofof}[Proposition \ref{prop:consistent-ssrcs}]
  All $\mu^{joint}_i,\mu^{prod}_i$ are Symmetric Rank Covariances by Proposition \ref{prop:isms-are-srcs}. It then follows, by definition, that $\mu^{joint}$ and $\mu^{prod}$ are Summed Symmetric Rank Covariances. We first show that $\mu^{joint} = 0 \implies D = 0$ so that $\mu^{joint}$ is consistent whenever $D$ is. Recalling \eqref{eq:D-equal-ism} and the fact that the $L_i\times R_i$ partition $(\{0,1\}^r\setminus\{0_r\}) \times (\{0,1\}^s\setminus\{0_s\})$ we have that
  \begin{align*}
    D(X,Y) &= \int_{\bR^{r+s}} \Big(\sum_{\substack{\ell_X\in\{0,1\}^r\setminus\{0_r\} \\ \ell_Y\in\{0,1\}^s\setminus\{0_s\}}} (p(z)_{0_d}p(z)_{\ell_X\ell_Y} - p(z)_{0_r\ell_Y}p(z)_{\ell_X0_s})\Big)^2 \dF_{XY}(x,y) \\
           &= \int_{\bR^{r+s}} \Big(\sum_{i=1}^k\sum_{\substack{\ell_X\in L_i \\ \ell_Y\in R_i}} (p(z)_{0_d}p(z)_{\ell_X\ell_Y} - p(z)_{0_r\ell_Y}p(z)_{\ell_X0_s})\Big)^2 \dF_{XY}(x,y).
  \end{align*}
  Now recall that if $a,b\in\bR$ then $(a + b)^2 > 0 \implies a^2 + b^2 >0$. Applying this fact $k$ times we have that
  \begin{align*}
    &\Big(\sum_{i=1}^k\sum_{\substack{\ell_X\in L_i \\ \ell_Y\in R_i}}  (p(z)_{0_d}p(z)_{\ell_X\ell_Y} - p(z)_{0_r\ell_Y}p(z)_{\ell_X0_s})\Big)^2 >0 \\
    \implies& \sum_{i=1}^k\Big(\sum_{\substack{\ell_X\in L_i \\ \ell_Y\in R_i}} (p(z)_{0_d}p(z)_{\ell_X\ell_Y} - p(z)_{0_r\ell_Y}p(z)_{\ell_X0_s})\Big)^2 >0.
  \end{align*}
  From this it immediately follows that $\mu^{joint} = 0 \implies D = 0$ as claimed. An essentially identical argument shows that $\mu^{prod} =0 \implies R = 0$. Since $R$ is D-consistent in all cases this implies that $\mu^{prod}$ is also.
\end{proofof}

\subsection{Proofs for Section \ref{sec:estimation-via-u-statistics}} \label{app:proofs-estimation-via-u-statistics}

\begin{proofof}[Proposition \ref{prop:rewrite-sym-kernel}]
  We will only show Equation \eqref{eq:rewrite-sym-kernel-in-x}, Equation \eqref{eq:rewrite-sym-kernel-in-y} then follows by symmetry. Now
  \begin{align*}
    \kappa(z^{[m]}) &= \frac{1}{m!}\sum_{\gamma\in S_m} \Big(\sum_{\sigma\in H}\sign(\sigma)\ I_X(x^{\sigma\gamma[m]})\Big) \ \Big(\sum_{\sigma\in H}\sign(\sigma)\ I_Y(y^{\sigma\gamma[m]})\Big) \\
    &= \frac{1}{m!}\sum_{\sigma\in H} \sum_{\gamma\in S_m}\sign(\sigma)\ I_X(x^{\sigma\gamma[m]}) \ \Big(\sum_{\psi\in H}\sign(\psi)\ I_Y(y^{\psi\gamma[m]})\Big) \\
    &= \frac{1}{m!}\sum_{\sigma\in H} \sum_{\sigma^{-1}\gamma\in S_m}\sign(\sigma)\ I_X(x^{\sigma\sigma^{-1}\gamma[m]}) \ \Big(\sum_{\psi\in H}\sign(\psi)\ I_Y(y^{\psi\sigma^{-1}\gamma[m]})\Big) \\
    &= \frac{1}{m!}\sum_{\sigma\in H} \sum_{\sigma^{-1}\gamma\in S_m} I_X(x^{\gamma[m]}) \ \Big(\sum_{\psi\in H}\sign(\psi)\sign(\sigma)\ I_Y(y^{\psi\sigma^{-1}\gamma[m]})\Big) \\
    &= \frac{1}{m!}\sum_{\sigma\in H} \sum_{\sigma^{-1}\gamma\in S_m} I_X(x^{\gamma[m]}) \ \Big(\sum_{\psi\in H}\sign(\psi\sigma^{-1})\ I_Y(y^{(\psi\sigma^{-1})\gamma[m]})\Big) \\
    &= \frac{1}{m!}\sum_{\sigma\in H} \sum_{\sigma^{-1}\gamma\in S_m} I_X(x^{\gamma[m]}) \ \Big(\sum_{\psi\in H}\sign(\psi)\ I_Y(y^{\psi\gamma[m]})\Big) \\
    &= \frac{|H|}{m!} \sum_{\gamma\in S_m} I_X(x^{\gamma[m]}) \ \Big(\sum_{\psi\sigma\in H}\sign(\psi) I_Y(y^{\psi\gamma[m]})\Big),
  \end{align*}
  where we have used the fact that for any $\sigma,\psi\in H$, $\sigma H=H$ and $\sigma S_m=S_m$, $\sign(\psi\sigma^{-1}) = \sign(\psi)\sign(\sigma^{-1})$, and $\sign(\sigma) = \sign(\sigma^{-1})$.
\end{proofof}

\begin{proofof}[Proposition \ref{prop:computational-results}]
  See Appendix \ref{app:efficient-computation}.
\end{proofof}

\begin{proofof}[Lemma \ref{lem:kernel-flip}]
  Without loss of generality assume that $S = \{\ell+1,...,m\}$. Moreover let $W^i = (W^i_X,W^i_Y)$ be a partition of $W^i$ into its $X$ and $Y$ components. We wish to show that $E[k(Z^{[t]}, z^{\ell+1,...,m})] = 0$. By $X\indep Y$ we have that
  \begin{align*}
    E[k(W)] = E\Big[\sum_{\sigma \in H} \sign(\sigma)\ I_{X}(W_X^{\sigma[m]})\Big]\ E\Big[\sum_{\sigma \in H}\sign(\sigma)\ I_{Y}(W_Y^{\sigma[m]})\Big].
  \end{align*}
  Now, letting $\tau_i\in E_i$ be a representative of the $i$th equivalence class, note that
  \begin{align*}
    E\Big[\sum_{\sigma \in H} \sign(\sigma)\ I_{X}(W_X^{\sigma[m]})\Big]
    &= E\Big[\sum_{i=1}^t\sum_{\sigma \in E_i} \sign(\sigma)\ I_{X}(W_X^{\sigma[m]})\Big]\\
    &= \sum_{i=1}^t\sum_{\sigma \in E_i} \sign(\sigma)\ E[I_{X}(W_X^{\sigma[m]})] \\
    &= \sum_{i=1}^t E[I_{X}(W_X^{\tau_i [m]})] \sum_{\sigma \in E_i} \sign(\sigma)\\
    &= \sum_{i=1}^t E[I_{X}(W_X^{\tau_i [m]})]\ 0 \\
    &= 0
  \end{align*}
  where the third equality follows since the $X^i$ are independent and identically distributed and so we may relabel them in $W_X^{\sigma[m]}$ so long as we preserve the locations of the $x_i$ and the fourth equality follows as each $E_i$ contains an equal number of even and odd permutations. It follows that $E[k(W)] = 0$.
\end{proofof}

\begin{proofof}[Proposition \ref{prop:multivariate-degenerate}]
  By Lemma \ref{lem:kernel-flip}, $E[k(\sigma(z^1,Z^2,\dots,Z^m))]=0$ for all $\sigma\in S_m$.  Hence,
\begin{align*}
  \kappa_1(z^1) = E[\kappa(z^1,Z^2,Z^3,\dots,Z^{m})] = \frac{1}{m!}\sum_{\sigma\in S_{m}} E[k(\sigma(z^1,Z^2,\dots,Z^m))] =0.
\end{align*}
\end{proofof}

\begin{proofof}[Lemma \ref{lem:simple-sym-kernel}]
  We first show that $\kappa_1(z^1)\equiv 0$ so that $U_{\mu}$ is degenerate. To see this let $S=\{i\}$ for some $i\in [m]$. If $i>4$ then every element of $h\in H_{\tau^*}$ fixes $i$ and hence, using lemma \ref{lem:kernel-flip} with $g=e$, we see that all $h\in H_{\tau^*}$ are in the same equivalence class. It follows that said equivalence class has an equal number of even and odd permutations and thus $E[k(Z^{1,\dots,i-1},z^i,Z^{i+1,\dots,m})] = 0$. Now suppose that $i\leq 4$. Using lemma \ref{lem:kernel-flip} it is easy to check, using the fact that the invariance group of $\mu$ contains $G$, that $H_{\tau^*}$ is divided into two equivalence classes both of which contain an even and an odd permutation and thus lemma \ref{lem:kernel-flip} holds. Since lemma \ref{lem:simple-sym-kernel} holds whenever $S$ is a singleton set, it follows, by proposition \ref{prop:multivariate-degenerate}, that $U_{\mu}$ is degenerate as claimed.

  We now show that $\kappa_2$ has the claimed form. Recall that
  \begin{align}
    \kappa_2(z^1,z^2) &= E[\kappa(z^1,z^2,Z^{3,\dots,m})] \\
                      &= \frac{1}{m!}\sum_{\sigma\in S_m} E[k(\sigma(z^1,z^2,Z^{3,\dots,m}))].
  \end{align}
  Suppose that $\sigma\in S_m$ is such that $S=\{\sigma(1),\sigma(2)\}\not\subset [4]$. Then we must have that either $\sigma(1)$ or $\sigma(2)$ is fixed by all elements in $h\in H_{\tau^*}$. It is then easy to check that, similarly as above, the conditions of lemma \ref{lem:simple-sym-kernel} hold for $S$ and thus $E[k(\gamma(z^1,z^2,Z^{3,\dots,m})] = 0$. Hence we have that
  \begin{align*}
    \kappa_2(z^1,z^2)  &= \frac{1}{m!}\sum_{\substack{\sigma\in S_m\\ \sigma(1),\sigma(2)\in [4]}} E[k(\sigma(z^1,z^2,Z^{3,\dots,m}))].
  \end{align*}
  As the $Z^i$ are independent and identically distributed and thus exchangeable it follows that, for any $\sigma,\gamma\in S_m$, if $\sigma(i)=\gamma(i)$ for $i=1,2$ then $E[k(\sigma(z^1,z^2,Z^{3,\dots,m})] = E[k(\gamma(z^1,z^2,Z^{3,\dots,m})]$. This allows us to write
  \begin{align*}
    \kappa_2(z^1,z^2) &= \frac{(m-2)!}{m!}\sum_{\substack{\sigma\in S_4\\ \sigma(3) < \sigma(4)}} E[k(\sigma(z^1,z^2,Z^3,Z^4),Z^{5,\dots,m})].
  \end{align*}
  Now
  \begin{align*}
    \sum_{\substack{\sigma\in S_4\\ \sigma(3) < \sigma(4) \\ \sigma(1)<\sigma(2)}}
 &E[k(\sigma(z^1,z^2,Z^3,Z^4),Z^{5,\dots,m})]\\
 &= E[k(z^1,z^2,Z^3,Z^4,Z^{5,\dots,m})] + E[k(z^1,Z^3,z^2,Z^4,Z^{5,\dots,m})] \\
 &\quad + E[k(z^1,Z^3,Z^4,z^2,Z^{5,\dots,m})] + E[k(Z^3,z^1,z^2,Z^4,Z^{5,\dots,m})] \\
 &\quad + E[k(Z^3,z^1,Z^4, z^2,Z^{5,\dots,m})] + E[k(Z^3,Z^4,z^1,z^2,Z^{5,\dots,m})].
  \end{align*}
  By definition of $k$ it is easy to verify, using the fact that $G$ is a subset of the invariance group, that
  \begin{align*}
    E[k(z^1,Z^3,Z^4,z^2,Z^{5,\dots,m})] = E[k(Z^3,z^1,z^2,Z^4,Z^{5,\dots,m})] = 0
  \end{align*}
  and 
  \begin{align*}
    &E[k(z^1,z^2,Z^3,Z^4,Z^{5,\dots,m})] = E[k(z^1,Z^3,z^2,Z^4,Z^{5,\dots,m})] \\
    &\quad= E[k(Z^3,z^1,Z^4, z^2,Z^{5,\dots,m})] = E[k(Z^3,Z^4,z^1,z^2,Z^{5,\dots,m})].
  \end{align*}
  Thus
  \begin{align*}
    \sum_{\substack{\sigma\in S_4\\ \sigma(3) < \sigma(4) \\ \sigma(1)<\sigma(2)}}
    E[k(\sigma(z^1,z^2,Z^3,Z^4),Z^{5,\dots,m})] = 4E[k(z^1,z^2,Z^3,Z^4,Z^{5,\dots,m})].
  \end{align*}
  By symmetry and as one may easily check that
  \begin{align*}
    E[k(z^1,z^2,Z^3,Z^4,Z^{5,\dots,m})]=E[k(z^2,z^1,Z^3,Z^4,Z^{5,\dots,m})]
  \end{align*}
  we have
  \begin{align*}
    \kappa_2(z^1,z^2) &= \frac{(m-2)!}{m!}\sum_{\substack{\sigma\in S_4\\ \sigma(3) < \sigma(4)}} E[k(\sigma(z^1,z^2,Z^3,Z^4),Z^{5,\dots,m})]\\
                      &= \frac{(m-2)!}{m!}\Big(\sum_{\substack{\sigma\in S_4\\ \sigma(3) < \sigma(4)\\ \sigma(1)<\sigma(2)}} E[k(\sigma(z^1,z^2,Z^3,Z^4),Z^{5,\dots,m})] \\
                      &\hspace{20mm} + \sum_{\substack{\sigma\in S_4\\ \sigma(3) < \sigma(4) \\ \sigma(2) < \sigma(1)}} E[k(\sigma(z^1,z^2,Z^3,Z^4),Z^{5,\dots,m})]\Big)\\
                      &= \frac{(m-2)!}{m!}\Big(4E[k(z^1,z^2,Z^3,Z^4,Z^{5,\dots,m})] + 4 E[k(z^2,z^1,Z^3,Z^4,Z^{5,\dots,m})]\Big)\\
                      &= \frac{8 (m-2)!}{m!}E[k(z^1,z^2,Z^3,Z^4,Z^{5,\dots,m})].
  \end{align*}
  Rewriting $\frac{8 (m-2)!}{m!} = \frac{4}{{m\choose 2}}$ gives our claimed result.
\end{proofof}

\begin{proofof}[Proposition \ref{prop:asymptotic-dists-for-bivariate}]
  Let $\kappa^{\tau^*},\kappa^D,\kappa^R$ be the symmetrized kernels corresponding to $\tau^*,D$ and $R$ respectively. By determining an explicit representation of $\kappa^{\tau^*}_2(z^1,z^2)$, Theorem 4.4 in \cite{NandyEtAl16} shows that $n\ U_{\tau^*} \to \frac{36}{\pi^4} Z$ in distribution. Indeed, they show that the eigenvalues associated to $\kappa^{\tau^*}_{2}$ are $\lambda_{ij}=\frac{6}{\pi^4i^2j^2}$ for $i,j\in \bZ_{>0}$. We will now show that $\kappa^D_2$ and $\kappa^R_2$ are scalar multiples of $\kappa^{\tau^*}_2$. By Lemma \ref{lem:simple-sym-kernel} we have that
  \begin{align*}
    \kappa^{\tau^*}_2(z^1,z^2) = \frac{2}{3}\ E[a_{I_{\tau^*}}(x^1,x^2,X^3,X^4)]\ E[a_{I_{\tau^*}}(y^1,y^2,Y^3,Y^4)].
  \end{align*}
  By Lemma \ref{lem:simple-sym-kernel} and Proposition \ref{prop:standard-measures-are-srcs} we have that
  \begin{align*}
    \kappa^D_2&(z^1,z^2) = \frac{1}{10}\ E\Big[a_{I_D}(x^{1,2},X^{3,4,5})\Big]\ E\Big[a_{I_D}(y^{1,2},Y^{3,4,5})\Big].
  \end{align*}
  Now a lengthy but straightforward computation shows that
  \begin{align*}
    E[a_{I_D}(x^{1,2},X^{3,4,5})] 
      &= \frac{1}{3} E[a_{I_{\tau^*}}(x^{1,2},X^{3,4})]
  \end{align*}
  and similarly for $E[a_{I_D}(y^{1,2},Y^{3,4,5})]$. Thus
  \begin{align*}
    \kappa^D_2&(z^1,z^2) \;=\; \frac{1}{90}\ E[a_{I_{\tau^*}}(x^{1,2},X^{3,4})]\ E[a_{I_{\tau^*}}(y^{1,2},Y^{3,4})] 
                \;=\; \frac{1}{60}\ \kappa^{\tau^*}_2(z^1,z^2).
  \end{align*}
  An essentially identical computations shows $\kappa^R_2(z^1,z^2) = \frac{1}{90} \kappa^{\tau^*}_2(z^1,z^2)$. Now, given knowledge of the eigenvalues of $\kappa^{\tau^*}_2$ above, our results follow immediately from Theorem \ref{theorem: degenerate asymptotics}.
\end{proofof}

\section{Efficient Computation of $U_D,U_R,U_{\tau^*_P}$, and $U_{\tau^*_J}$} \label{app:efficient-computation}

\subsection{Computing $U_D$} \label{sec:computing-ud}

By Proposition \ref{prop:rewrite-sym-kernel} we have that
\begin{align*}
  &\kappa^D(z^{[5]}) = \frac{1}{5!}\sum_{\gamma\in S_m}\frac{1}{4} \Big(\sum_{\sigma\in H_{\tau^*}}\sign(\sigma)I_{D}(x^{\sigma\gamma[5]})\Big)\Big(\sum_{\sigma\in H_{\tau^*}}\sign(\sigma)I_{D}(y^{\sigma\gamma[5]})\Big) \\
  &= \frac{1}{5!}\sum_{\gamma\in S_m}\frac{|H_{\tau^*}|}{4} I_{D}(x^{\gamma[5]})\Big(\sum_{\sigma\in H_{\tau^*}}\sign(\sigma)I_{D}(y^{\sigma\gamma[5]})\Big)\\
  &= \frac{1}{5!}\sum_{\gamma\in S_m} I_{D}(x^{\gamma[5]})\Big(\sum_{\sigma\in H_{\tau^*}}\sign(\sigma)I_{D}(y^{\sigma\gamma[5]})\Big).
\end{align*}
Recalling the definition of $I_D$ this gives
\begin{align*}
  U_{D}(z^{[n]}) = \frac{1}{{n\choose 5}5!}&\sum_{\substack{1\leq i_1,\dots,i_5\leq n\\ i_1\not=i_2\not=\dots\not=i_5}} I_{[x^{i_1},x^{i_2}\preceq x^{i_5}]}I_{[x^{i_3},x^{i_4}\not\preceq x^{i_5}]}\\
                                           &\cdot \Big(I_{[y^{i_1},y^{i_2}\preceq y^{i_5}]}I_{[y^{i_3},y^{i_4}\not\preceq y^{i_5}]} + I_{[y^{i_4},y^{i_3}\preceq y^{i_5}]}I_{[y^{i_2},y^{i_1}\not\preceq y^{i_5}]} \\
                                           &\quad - I_{[y^{i_1},y^{i_3}\preceq y^{i_5}]}I_{[y^{i_2},y^{i_4}\not\preceq y^{i_5}]} - I_{[y^{i_4},y^{i_2}\preceq y^{i_5}]}I_{[y^{i_3},y^{i_1}\not\preceq y^{i_5}]}\Big).
\end{align*}
Now for any $1\leq k\leq n$ define
\begin{align*}
  C_{\preceq,\preceq}(k) :&= |\{i\ :\ i\not=k\text{ and } x^i\preceq x^{k} \text{ and } y^i \preceq y^k\}| \\
                     &= |\{i\ :\ z^i\preceq z^{k}\}| - 1 \\
  C_{\preceq,\not\preceq}(k) :&= |\{i\ :\ i\not=k\text{ and } x^i\preceq x^{k} \text{ and } y^i \not\preceq y^k\}| \\
                     &= |\{i\ :\ x^i\preceq x^{k}\}| - |\{i\ :\ z^i\preceq z^{k}\}|, \\
  C_{\not\preceq,\preceq}(k) :&= |\{i\ :\ i\not=k\text{ and } x^i\not\preceq x^{k} \text{ and } y^i \preceq y^k\}| \\
                      &=|\{i\ :\ y^i\preceq y^{k}\}| - |\{i\ :\ z^i\preceq z^{k}\}|, \quad\quad\quad \text{and} \\
  C_{\not\preceq,\not\preceq}(k) :&= |\{i\ :\ i\not=k\text{ and } x^i\not\preceq x^{k} \text{ and } y^i \not\preceq y^k\}| \\
  &=n - |\{i\ :\ x^i\preceq x^{k}\}| - |\{i\ :\ y^i\preceq y^{k}\}| + |\{i\ :\ z^i\preceq z^{k}\}|.
\end{align*}
From this, for fixed $i_5$,
\begin{align*}
  &\sum_{\substack{1\leq i_1,\dots,i_4\leq n\\ i_1\not=i_2\not=\dots\not=i_5}} I_{[x^{i_1},x^{i_2}\preceq x^{i_5}]}I_{[x^{i_3},x^{i_4}\not\preceq x^{i_5}]}I_{[y^{i_1},y^{i_2}\preceq y^{i_5}]}I_{[y^{i_3},y^{i_4}\not\preceq y^{i_5}]} \\
  &= |\{\text{pairs $i\not=j$ with $i\not=i_5\not=j$ and $z^i,z^j\preceq z^{i_5}$}\}|\ |\{\text{pairs $i,j$ with $x^i,x^j\not\preceq x^{i_5}$ and $y^i,y^j\not\preceq y^{i_5}$}\}| \\
  &= 4\ {C_{\preceq,\preceq}(i_5) \choose 2 } \ {C_{\not\preceq,\not\preceq}(i_5) \choose 2 } \\
  &= A(i_5).
\end{align*}
Similarly we have
\begin{align*}
  &\sum_{\substack{1\leq i_1,\dots,i_4\leq n\\ i_1\not=i_2\not=\dots\not=i_5}} I_{[x^{i_1},x^{i_2}\preceq x^{i_5}]}I_{[x^{i_3},x^{i_4}\not\preceq x^{i_5}]} I_{[y^{i_4},y^{i_3}\preceq y^{i_5}]}I_{[y^{i_2},y^{i_1}\not\preceq y^{i_5}]} \\
  &= |\{\text{pairs $i,j$ with $x^i,x^j\preceq x^{i_5}$ and $y^i,y^j\not\preceq y^{i_5}$}\}|\ |\{\text{pairs $i,j$ with $x^i,x^j\not\preceq x^{i_5}$ and $y^i,y^j \preceq y^{i_5}$}\}| \\
  &= 4\ {C_{\preceq,\not\preceq}(i_5)\choose 2 } \ {C_{\preceq,\not\preceq}(i_5)\choose 2 }\\
  &=B(i_5),
\end{align*}
and
\begin{align*}
  &\sum_{\substack{1\leq i_1,\dots,i_4\leq n\\ i_1\not=i_2\not=\dots\not=i_5}} I_{[x^{i_1},x^{i_2}\preceq x^{i_5}]}I_{[x^{i_3},x^{i_4}\not\preceq x^{i_5}]} I_{[y^{i_1},y^{i_3}\preceq y^{i_5}]}I_{[y^{i_2},y^{i_4}\not\preceq y^{i_5}]} \\
  &=\sum_{\substack{1\leq i_1,\dots,i_4\leq n\\ i_1\not=i_2\not=\dots\not=i_5}} I_{[x^{i_1},x^{i_2}\preceq x^{i_5}]}I_{[x^{i_3},x^{i_4}\not\preceq x^{i_5}]} I_{[y^{i_4},y^{i_2}\preceq y^{i_5}]}I_{[y^{i_3},y^{i_1}\not\preceq y^{i_5}]} \\
  &= C_{\preceq,\preceq}(i_5)\ C_{\not\preceq,\not\preceq}(i_5)\ C_{\not\preceq,\preceq}(i_5)\ C_{\preceq,\not\preceq}(i_5) \\
  &=C(i_5).
\end{align*}
Thus we have that
\begin{align} \label{eq:D-u-stat-for-computation}
  U_{D}(z^1,\dots,z^n) = \frac{1}{{n\choose 5}5!}\sum_{1\leq i\leq n} (A(i) + B(i) - 2 \ C(i)).
\end{align}
Now it is easy to verify that, for any $1\leq i\leq n$, we may compute $A(i),B(i),$ and $C(i)$ using a constant number of orthogonal range queries on $z^{[n]}$. Noting that it takes $O(n\ \log_2(n)^{d-1})$ to construct the range-tree on $z^{[n]}$, each orthogonal range query takes $O(\log_2(n)^{d-1})$ time, and there are $n$ iterations in the above sum, it follows that we may compute $U_d(z^1,\dots,z^n)$ in $O(n\ \log_2(n)^{d-1}) + n\ O(\log_2(n)^{d-1}) = O(n\ \log_2(n)^{d-1})$ time.

\subsection{Computing $U_R$}

Recall that the kernel $\kappa^R$ is of order $m = d+4$ and so naively takes $O(n^{d+4})$ time to compute, we will show that it can be computed in $O(n^d)$ time. By similar arguments as in Section \ref{sec:computing-ud} we have that
\begin{align} \label{eq:R-u-stat-for-computation}
  U_R(z^{[n]}) = \frac{1}{{n\choose m}m!}\sum_{\substack{i^{[d]} = (i^1,\dots,i^d) \in [n]^d\\ i^1\not=\dots\not= i^d}}(A^R(i^{[d]})+B^R(i^{[d]})-2\ C^R(i^{[d]}))
\end{align}
where
\begin{align*}
  A^R(i^{[d]}) &= 4\ {C_{\preceq,\preceq}^R(i^{[d]}) \choose 2} {C_{\not\preceq,\not\preceq}^R(i^{[d]})\choose 2}, \\
  B^R(i^{[d]}) &= 4\ {C_{\preceq,\not\preceq}^R(i^{[d]}) \choose 2} {C_{\not\preceq,\preceq}^R(i^{[d]})\choose 2},\quad \text{and} \\
  C^R(i^{[d]}) &= C_{\preceq,\preceq}^R(i^{[d]})\ C_{\not\preceq,\not\preceq}^R(i^{[d]})\ C_{\preceq,\not\preceq}^R(i^{[d]})\ C_{\not\preceq,\preceq}^R(i^{[d]})
\end{align*}
and, letting $w=(w^X,w^Y)\in\bR^{r+s}$ be such that $w^X_j = x^{i^j}_j$ for $j\in [r]$ and $w^Y_j = y^{i^j}_j$ for $j\in[s]$,
\begin{align*}
  C^R_{\preceq,\preceq}(i^{[d]}) :&= |\{i\ :\ i\not\in\{i^1,\dots,i^d\}\text{ and } x^i\preceq w^X\ \text{ and } y^i \preceq w^Y\}| \\
                     &= |\{i\ :\ z^i\preceq w\}| - |\{j\ :\ z^{i^j}\preceq w\}| \\
  C^R_{\preceq,\not\preceq}(i^{[d]}) :&= |\{i\ :\ i\not\in\{i^1,\dots,i^d\} \text{ and } x^i\preceq w^X \text{ and } y^i \not\preceq w^Y\}| \\
                     &= |\{i\ :\ x^i\preceq w^X\}| - |\{i\ :\ z^i\preceq w\}| - |\{j\ :\ x^{i^j}\preceq w^X\text{ and }y^{i^j}\not\preceq w^Y\}|, \\
  C^R_{\not\preceq,\preceq}(i^{[d]}) :&= |\{i\ :\ i\not\in\{i^1,\dots,i^d\} \text{ and } x^i\not\preceq w^X \text{ and } y^i \preceq W^Y\}| \\
                                  &= |\{i\ :\ y^i\preceq w^Y\}| - |\{i\ :\ z^i\preceq w\}| - |\{j\ :\ x^{i^j}\not\preceq w^X\text{ and }y^{i^j}\preceq w^Y\}|, \\
  C^R_{\not\preceq,\not\preceq}(i^{[d]}) :&= |\{i\ :\ i\not\in\{i^1,\dots,i^d\}\text{ and } x^i\not\preceq w^X \text{ and } y^i \not\preceq w^Y\}| \\
                                  &=n - |\{i\ :\ x^i\preceq w^X\}| - |\{i\ :\ y^i\preceq w^Y\}| + |\{i\ :\ z^i\preceq w\}|\\
  &\quad\quad - |\{j\ :\ x^{i^j}\not\preceq w^X\text{ and } y^{i^j}\not\preceq w^Y\}|.
\end{align*}
Clearly each of $A^R(i^{[d]}),B^R(i^{[d]})$, and $C^R(i^{[d]})$ can be computed using a constant number of orthogonal range searches. Now, constructing the tensor from Proposition \ref{prop:tensor-orth-search} for $z^{[n]}$ takes $O(n^d)$ time after which orthogonal range searches on $z^{[n]}$ can be completed in constant time. The summation in Equation \eqref{eq:R-u-stat-for-computation} is over $n^d$ elements and thus, using the tensor, the summation can be completed in $O(n^d)$ time. It then follows that the total time to compute $U_R(z^{[n]})$ is $O(n^d)+O(n^d) = O(n^d)$ as claimed.

\subsection{Computing $U_{\tau^*_J}$ and $U_{\tau^*_P}$}

The computation of the U-statistics estimating $\tau^*_J$ and $\tau^*_P$ is somewhat more involved than that for those estimating $U_D$ and $U_R$. By Proposition \ref{prop:rewrite-sym-kernel} we have that
\begin{align*}
  \kappa^{\tau_P^*}(z^{[4]}) &= \frac{1}{3!}\sum_{\gamma\in S_4} I_{P}(x^{\gamma[4]})\sum_{\sigma\in H_{\tau^*}}\sign(\sigma)I_{J}(y^{\sigma\gamma[4]}).
\end{align*}
This then gives us that
\begin{align*}
  &U_{\tau^*_J}(z^{n]}) = \frac{1}{{n \choose 4} 3!}\sum_{\substack{1\leq i_1,\dots,i_4\leq n \\ i_1\not=\dots\not= i_4}}I_{[x^{i_3},x^{i_4}\not\preceq x^{i_1},x^{i_2}]} \ \Big(I_{[y^{i_3},y^{i_4}\not\preceq y^{i_1},y^{i_2}]} + I_{[y^{i_2},y^{i_1}\not\preceq y^{i_4},y^{i_3}]} \\
                       &\hspace{85mm} - I_{[y^{i_2},y^{i_4}\not\preceq y^{i_1},y^{i_3}]} - I_{[y^{i_3},y^{i_1}\not\preceq y^{i_4},y^{i_2}]}\Big) \\
  &= \frac{1}{{n \choose 4} 3!}\sum_{\substack{1\leq i_1,\dots,i_4\leq n \\ i_1\not=\dots\not= i_4}}I_{[x^{i_3},x^{i_4}\not\preceq x^{i_1},x^{i_2}]} \ \Big(I_{[y^{i_3},y^{i_4}\not\preceq y^{i_1},y^{i_2}]} + I_{[y^{i_2},y^{i_1}\not\preceq y^{i_4},y^{i_3}]}  - 2\ I_{[y^{i_2},y^{i_4}\not\preceq y^{i_1},y^{i_3}]}\Big)
\end{align*}
where the second equality follows by swapping the labels of 1,2 and 3,4 respectively. Similarly as in the prior sections, for fixed $i_1,i_2$,
\begin{align}
  &\sum_{\substack{1\leq i_3,i_4\leq n \\ i_1\not=\dots\not= i_4}}I_{[x^{i_3},x^{i_4}\not\preceq x^{i_1},x^{i_2}]} \ I_{[y^{i_3},y^{i_4}\not\preceq y^{i_1},y^{i_2}]} \nonumber\\
  &= |\{\text{pairs $k\not=l$ with $k,l\not\in \{i_1,i_2\}$, $x^{k},x^{l}\not\preceq x^{i_1},x^{i_2}$ and $y^{k},y^{l} \not\preceq y^{i_1},y^{i_2}$}\}| \nonumber\\
  &= 2\ { n - |\{i : x^i \preceq x^{i_1}\quad \text{or}\quad x^i \preceq x^{i_2}\quad \text{or}\quad y^i \preceq y^{i_1}\quad \text{or}\quad y^i \preceq y^{i_2}\}| \choose 2}. \label{eq:pos-con-tau-p}
\end{align}
Now, using the standard inclusion-exclusion formulas we may compute $|\{i : x^i \preceq x^{i_1}\quad \text{or}\quad x^i \preceq x^{i_2}\quad \text{or}\quad y^i \preceq y^{i_1}\quad \text{or}\quad y^i \preceq y^{i_2}\}|$ using 16 orthogonal range queries queries on $z^{[n]}$.

Next
\begin{align}
  &\sum_{\substack{1\leq i_3,i_4\leq n \\ i_1\not=\dots\not= i_4}}I_{[x^{i_3},x^{i_4}\not\preceq x^{i_1},x^{i_2}]} \ I_{[y^{i_2},y^{i_1}\not\preceq y^{i_4},y^{i_3}]}\nonumber \\
  &= |\{\text{pairs $k\not=l$ with $k,l\not\in \{i_1,i_2\}$, $x^{k},x^{l} \not\preceq x^{i_1},x^{i_1}$, and $y^{i_1},y^{i_2} \not\preceq y^{k},y^{l}$}\}| \nonumber \\
  &= 2\ { n - |\{i : x^i \preceq x^{i_1}\quad \text{or}\quad x^i \preceq x^{i_2}\quad \text{or}\quad y^{i_1} \preceq y^i\quad \text{or}\quad y^{i_2} \preceq y^i\}| \choose 2}. \label{eq:neg-con-tau-p}
\end{align}
Again, by inclusion-exclusion, we have that $|\{i : x^i \preceq x^{i_1}\quad \text{or}\quad x^i \preceq x^{i_2}\quad \text{or}\quad y^{i_1} \preceq y^i\quad \text{or}\quad y^{i_2} \preceq y^i\}|$ can be computed using 16 orthogonal range queries on $z^{[n]}$. We now have the most difficult case remaining. We have that
\begin{align}
  &\sum_{\substack{1\leq i_3,i_4\leq n \\ i_1\not=\dots\not= i_4}}I_{[x^{i_3},x^{i_4}\not\preceq x^{i_1},x^{i_2}]} \ I_{[y^{i_2},y^{i_4}\not\preceq y^{i_1},y^{i_3}]} \nonumber\\
  &= |\{\text{pairs $k\not=l$ with $k,l\not\in \{i_1,i_2\}$, $x^{k},x^{l}\not\preceq x^{i_1},x^{i_2}$ and $y^{i_2},y^{l} \not\preceq y^{i_1},y^{l}$}\}| \nonumber\\
  &= 1_{[y^{i_2} \not\preceq y^{i_1}]}\Bigg(2\ {n \choose 2} - |\{(k,l): x^k\preceq x^{i_1} \ \ \text{or} \ \  x^k\preceq x^{i_2} \ \ \text{or} \ \  x^l\preceq x^{i_1} \ \ \text{or} \ \  x^l\preceq x^{i_2} \label{eq:dis-tau-p}\\
  &\hspace{60mm}  \ \ \text{or} \ \ y^{k} \preceq y^l \ \ \text{or} \ \  y^{i_2} \preceq y^{l} \ \ \text{or} \ \  y^{k} \preceq y^{i_1}\}|\Bigg). \nonumber
\end{align}
Unlike in the prior derivations, the second to last term above has a condition which directly relates $z^k$ and $z^l$ and hence we cannot reduce to simple forms as in Equations \eqref{eq:pos-con-tau-p},~\eqref{eq:neg-con-tau-p}. Despite this, as we will now show, it is still possible to use orthogonal range queries to compute Equation \eqref{eq:dis-tau-p}. To see this, we construct a collection $\cD_{pair}$ of points in $\bR^{2d}$ consisting of the concatenation of all pairs $z^i,z^j$ with $i\not=j$, that is we let
\begin{align*}
  \cD_{pair} = \{(z^i,z^j) = (x^i,y^i,x^j,y^j) \mid 1\leq i\not=j\leq n\}.
\end{align*}
$|\cD_{pair}| = 2\ {n\choose 2} = n(n-1)$ so that $\cD_{pair}$ takes $O(n^2)$ time to construct. Since $\cD_{pair}$ contains $n(n-1)$ elements of dimension $2d$ we may construct a orthogonal range-tree on $\cD_{pair}$ in, recalling that we consider $d$ to be bounded, $O(n^2 \log_2(n^2)^{2d-1}) = O(n^2\log_2(n)^{2d-1})$ time. Orthogonal range queries on $\cD_{pair}$ require $O(\log_2(n)^{2d-1})$ time.

Let
\begin{align*}
  a_1 &= \{(k,l)\in B : x^k\preceq x^{i_1}\}, \quad a_2 = \{(k,l)\in B : x^k\preceq x^{i_2}\}, \\
  a_3 &= \{(k,l)\in B : x^l\preceq x^{i_1}\}, \quad a_4 = \{(k,l)\in B : x^l\preceq x^{i_2}\},\\
  a_5 &= \{(k,l)\in B : y^k\preceq y^{i_1}\}, \quad a_6 = \{(k,l)\in B : y^{i_2}\preceq y^{l}\}, \text{ and} \\
  a_7 &= \{(k,l)\in B : y^k\preceq y^{l}\}
\end{align*}
where $B = \{(k,l)\in [n]^2 : k\not=l\}$.

Using inclusion-exclusion we have that
\begin{align*}
  &2\ {n \choose 2} - |\{(k,l)\in B: x^k\preceq x^{i_1} \ \ \text{or}\ \  x^k\preceq x^{i_2} \ \ \text{or}\ \  x^l\preceq x^{i_1} \ \ \text{or}\ \  x^l\preceq x^{i_2} \ \ \\
  &\hspace{20mm}\text{or}\ \  y^{k} \preceq y^l \ \ \text{or}\ \  y^{i_2} \preceq y^{l} \ \ \text{or}\ \  y^{k} \preceq y^{i_1}\}|  \\
  &= \sum_{L\subset[7]}(-1)^{|L|} |\cap_{i\in L} a_i| \\
  &= \sum_{L\subset[6]}(-1)^{|L|} |\cap_{i\in L} a_i| + \sum_{L\subset[6]}(-1)^{|L|+1} |a_7 \cap (\cap_{i\in L} a_i)|
\end{align*}
where we, in the above, let the empty intersection equal $B$. Now $a_1,\dots,a_6$ are nothing more than orthogonal range constraints on elements in $\cD_{pair}$, it follows that $|\cap_{i\in L} a_i|$ for $L\subset [6]$ can be computed by an orthogonal range query on $\cD_{pair}$. Thus we can compute $\sum_{L\subset[6]}(-1)^{|L|} |\cap_{i\in L} a_i|$ using $2^6 = 64$ orthogonal range queries on $\cD_{pair}$ which takes $O(64\log_2(n)^{2d-1})=O(\log_2(n)^{2d-1})$ time.

It remains to show how we can compute $\sum_{L\subset[6]}(-1)^{|L|} |a_7 \cap (\cap_{i\in L} a_i)|$. Since $a_7$ describes relationship between pairs $y^k,y^l$ and thus is not a standard orthogonal range query constraint. Perhaps surprisingly, however, this does not pose a substantial obstacle. Consider the collection
\begin{align*}
  \cD^*_{pair} = \{(z^i,z^j) = (x^i,y^i,x^j,y^j) \mid 1\leq i\not=j\leq n \quad\text{and}\quad y^i \preceq y^j\},
\end{align*}
$\cD^*_{pair}$ is the subset of points in $\cD_{pair}$ which satisfy the condition in $a_7$. Clearly $D^*_{pair}$ can be constructed in $O(n^2)$ time. Hence for any $L\subset [6]$ we can compute $a_7 \cap (\cap_{i\in L} a_i)$ by performing an orthogonal range query, with the constraints from $\cap_{i\in L} a_i$, on the set $\cD^*_{pair}$. It follows that $\sum_{L\subset[6]}(-1)^{|L|} |a_7 \cap (\cap_{i\in L} a_i)|$ can be computed with $64$ orthogonal range queries, as with $\cD_{pair}$, constructing a rangetree on $\cD^*_{pair}$ takes $O(n^2\log_2(n)^{2d-1})$ and range query requires $O(\log_2(n)^{2d-1})$ time.

Finally, as $U_{\tau^*_p}(z^{[n]})$ is a sum over $n(n-1)$ choices for $i_1,i_2$ and, for each $i_1,i_2$, we must compute \eqref{eq:pos-con-tau-p}, \eqref{eq:neg-con-tau-p}, and \eqref{eq:dis-tau-p} which, from the above, requires $O(\log_2(n)^{2d-1})$ time. Hence assuming the range-trees on $\cD_{pair}$ and $\cD^*_{pair}$ have already been constructed, computing $U_{\tau^*_P}(z^{[n]})$ requires $O(n(n-1)\log_2(n)^{2d-1}) = O(n^2\log_2(n)^{2d-1})$ time. As constructing the range-trees on $\cD_{pair}$ and $\cD^*_{pair}$ require each $O(n^2\log_2(n)^{2d-1})$ time it follows that the total asymptotic computation time of $U_{\tau^*_P}(z^{[n]})$ is $O(n^2\log_2(n)^{2d-1})$.

A similar argument shows that $U_{\tau^*_J}$ can also be computed in $O(n^2\log_2(n)^{2d-1})$ time.

\bibliographystyle{abbrvnat}
\bibliography{bersma_bib}

\end{document}